\newcommand{\pd}{\partial}
\newcommand{\bC}{{\mathbb C}}
 \newcommand{\bH}{{\mathbb H}}
\newcommand{\bP}{{\mathbb P}}
\newcommand{\bQ}{{\mathbb Q}}
\newcommand{\bR}{{\mathbb R}} \newcommand{\bV}{{\mathbb V}}
\newcommand{\bZ}{{\mathbb Z}}
\newcommand{\cA}{{\mathcal A}}
\newcommand{\cB}{{\mathcal B}}
\newcommand{\cF}{{\mathcal F}}
\newcommand{\cO}{{\mathcal O}}
\newcommand{\cP}{{\mathcal P}}
\newcommand{\half}{\frac{1}{2}}
\newcommand{\cV}{{\mathcal V}}
\newcommand{\mB}{\mathfrak{B}}
\newcommand{\vac}{|0\rangle}
\newcommand{\lvac}{\langle 0|}
 \DeclareMathOperator{\End}{End}
\DeclareMathOperator{\ch}{ch} \DeclareMathOperator{\id}{id}
 \DeclareMathOperator{\tr}{tr}
 \DeclareMathOperator{\Res}{Res}
\newtheorem{theorem}{Theorem}[section]
\newtheorem{theorem/definition}{Theorem/Definition}[section]
\newtheorem{prop}{Proposition}[section]
\theoremstyle{remark}
\theoremstyle{definition}
\newtheorem{definition}{Definition}[section]
\newcommand{\be}{\begin{equation}}
\newcommand{\ee}{\end{equation}}
\newcommand{\bea}{\begin{eqnarray}}
\newcommand{\ben}{\begin{eqnarray*}}
\newcommand{\een}{\end{eqnarray*}}
\newcommand{\eea}{\end{eqnarray}}
\newcommand{\bet}{\begin{equation}
\begin{split}}
\newcommand{\eet}{\end{split}
\end{equation}}
\definecolor{yellow}{rgb}{1,1,0}
\definecolor{orange}{rgb}{1,.7,0}
\definecolor{red}{rgb}{1,0,0} \definecolor{green}{rgb}{0,1,1}
\definecolor{white}{rgb}{1,1,1}
\definecolor{A}{rgb}{.75,1,.75}
\newcommand{\corr}[1]{\langle {#1} \rangle}
\newcommand{\Corr}[1]{\{ {#1} \}}
\newcommand{\COrr}[1]{\biggl\{ {#1} \biggr\}}
\newcommand{\Cors}[1]{[ {#1} ]}
\newcommand{\COrs}[1]{\biggl[ {#1} \biggr]}
\begin{document}

\title
{K-Theory of Hilbert Schemes as a Formal Quantum Field Theory}

\author{Jian Zhou}
\address{Department of Mathematical Sciences\\
Tsinghua University\\Beijing, 100084, China}
\email{jzhou@math.tsinghua.edu.cn}

\begin{abstract}
We define a notion of formal quantum field theory and associate a formal quantum field theory
to K-theoretical intersection theories on Hilbert schemes of points on algebraic surfaces.
This enables us to find an effective way to compute  K-theoretical intersection theories on Hilbert schemes
via a connection to Macdonald polynomials and vertex operators.
\end{abstract}
\maketitle

\section{Introduction}

There are some striking similarities between quantum field theory and algebraic topology.
In both of these theories one starts with some geometric objects
and constructs some algebraic structures associated with them.
In a quantum field theory,
one constructs a space of observables,
defines and computes\emph{} their correlation functions.
In an algebraic topological theory,
one constructs a space of cohomology groups or K-groups, etc.,
and defines and computes their intersection numbers.
Both admit some axiomatic characterizations,
and both seek for identifications of different constructions.
In mathematics,
there are various isomorphism theorems that identifies different cohomology theories,
e.g. the de Rham theorem.
In physics,
the identifications of different quantum field theories
are often referred to as duality by physicists.
This aspect of quantum field theory has produced many striking
results relating different branches of mathematics and thus has attracted
the attentions of many mathematicians.

Such similarities make some mathematicians
to regard quantum field theory as physicists' algebraic topology.
It might be fair also to regard algebraic topology as mathematicians' quantum field theory.
It has been very fruitful to borrow terminologies
and formalisms from quantum field theory.
See for example,
Atiyah's axioms for topological quantum field theories \cite{Atiyah}
and Kontsevich-Manin's axioms for Gromov-Witten theory \cite{Kon-Man}.

The construction of a quantum  field theory involves integrations
over infinite-dimensional spaces that often requires mathematical
justifications to become mathematically rigourous.
There are some quantum field theories that admit supersymmetries,
and physicists apply supersymmetric localizations
to reduce to integrals over finite-dimensional spaces.
The topological quantum field theories
are those supersymmetric quantum field theories
which reduce to intersection theories
on finite-dimensional moduli spaces in differential
geometry and algebraic geometry.
For example,
Donaldson theory \cite{Wit1},
intersection theory on moduli spaces of holomorphic vector bundles on Riemann surfaces \cite{Wit2},
and Gromov-Witten theory \cite{Wit3}.

If one reverses the direction of the above reduction from topological quantum field theory
to intersection theories on moduli spaces,
one can start with some intersection theories on some  spaces,
and seek for suitable topological   quantum field theory that leads to it.
Once such a geometric problem has been reformulated as a quantum field theory,
one can apply techniques developed in physics literature
to its study.
In particular,
one may find some method to calculate the intersection numbers
in a formalism familiar to physicists,
but mathematically difficult to find.

Concrete examples of this beautiful idea have been
provided by the mathematical work on Donaldson theory,
intersection theory on moduli spaces of holomorphic vector bundles on Riemann surfaces,
and Gromov-Witten theory.
In this paper we will prsent another example.
We will apply this idea to the K-theory of Hilbert schemes of points
on algebraic surfaces.
We will not actually construct a quantum field theory in its original sense
from a Lagrangian formulation as normally done by physicists,
but instead we will introduce a notion of {\em formal quantum field}
that preserves only the main features of a quantum field theory such as correlators, partition
functions, etc.
This formalism avoids the difficult problem of justifications
of the mathematical rigor of a physical quantum field theory,
but nevertheless it enables us to apply techniques from conformal field theory,
such as vertex operators on bosonic Fock space to  compute
the K-theoretical intersection numbers on Hilbert schemes.

Our starting point is that by a result of Ellingsrud-G\"ottsche-Lehn \cite{EGL},
one can reduce to the case of Hilbert schemes of points on $\bC^2$.
In this case one can apply localization formula to compute the equivariant
K-theoretical intersection numbers
by a summation over partitions,
hence we are facing a problem in the same spirit of a problem in statistical physics.
To reformulate it as a formal quantum field theory,
one needs to combine two different connections to Macdonald polynomials
discovered by Haiman \cite{Haiman} and Wang-Zhou \cite{Wang-Zhou1} for different purposes.
Furthermore,
one needs to reformulate these connections in terms of operators on
the bosonic Fock space.
For this purpose,
we introduce a notion of {\em vertex realizable operators},
and develop a formalism for calculating their correlation.
So this work is a sequel to Wang-Zhou \cite{Wang-Zhou1} with
a broader perspective.
In particular, the main result there is used and generalized in this paper.

We arrange the rest of this paper in the following way.
In Section 2 we define the notions of formal quantum field theory and operatorial
formal quantum field theory.
In Section 3 we recall some relevant background results on Hilbert schemes
that serve as motivations to our work.
We introduce the main object of our study, the equivariant K-theoretical
intersection numbers of tautological sheaves on $(\bC^2)^{[n]}$, in Section 4,
and establish their relationship with Macdanald functions and vertex operators
in Section 5.
Next, in Section 6,
we introduce a notion of vertex realizable operators
and develop a formalism for their concrete computations.
Applications are made in Section 7 where we associate a formal quantum
field theory to K-theory of Hilbert schemes and some correlators are computed.
We present our concluding remarks on further questions in Section 8.

\section{Formal Quantum Field Theories}

In this Section we introduce a notion of a formal quantum field theory.
Using the bosonic Fock space
we present some examples of special formal quantum field theories.

\subsection{Formal quantum field theories}

By an {\em observable algebra} we mean a commutative algebra $\cO$ with identity $1$.
We assume $\cO$ is
generated by $\{\cO_i\}_{i\geq 0}$,
where $\cO_0 = 1$,
and
$$\cO_m \cO_n = \cO_n \cO_m,$$
for all $m, n\geq 0$.
Then the algebra $\cO$ consists of linear combinations of elements
of the form $\cO_{m_1} \cdots \cO_{m_n}$.
Elements in $\cO$ will be called {\em observables}.
Let $R$ be another commutative ring.
By a {\em formal quantum field theory} with observable algebra $\cO$ and  with values in $R$,
we mean a sequence of elements in $S^n(\cO^*) \otimes R$, one for each $n\geq 1$.
It is specified by the {\em correlators} $\corr{\cO_{m_1}, \dots, \cO_{m_n}} \in R$.
The {\em normalized correlators} $\corr{\cO_{m_1}, \dots, \cO_{m_n}}'$
are defined by
\be
\corr{\cO_{m_1}, \dots, \cO_{m_n}}':= \frac{\corr{\cO_{m_1}, \dots, \cO_{m_n}}}{\corr{\cO_0}}.
\ee
A convenient way to encode all the normalized correlators is to
introduce formal variables $\{t_0, t_1, \dots, t_n, \dots\}$,
and consider the generating series:
\be
Z:= 1+\sum_{n \geq 1} \corr{\cO_{m_1}, \dots, \cO_{m_n}}' \frac{t_{m_1} \cdots t_{m_n}}{n!}.
\ee
It is called the {\em partition function}. Note:
\be \label{eqn:corr-Z}
\corr{\cO_{m_1}, \dots, \cO_{m_n}}'
= \frac{\pd^n}{\pd t_{m_1} \cdots \pd t_{m_n}} Z|_{t_i=0, i \geq 0}.
\ee
The {\em free energy} $F$ is defined by:
\be \label{eqn:F-Z}
F:= \log Z.
\ee
The {\em connected correlators} are defined by:
\be \label{eqn:corr-F}
\corr{\cO_{m_1}, \dots, \cO_{m_n}}'_c
= \frac{\pd^n}{\pd t_{m_1} \cdots \pd t_{m_n}} F|_{t_i=0, i \geq 0}.
\ee
By \eqref{eqn:corr-Z} to \eqref{eqn:corr-F},
one can get:
\ben
&& \corr{\cO_{m_1}}'=\corr{\cO_{m_1}}_c', \\
&& \corr{\cO_{m_1}\cO_{m_2}}'=\corr{\cO_{m_1}\cO_{m_2}}_c'
+ \corr{\cO_{m_1}}_c'\corr{\cO_{m_2}}_c', \\
&& \corr{\cO_{m_1}\cO_{m_2}\cO_{m_3}}'=\corr{\cO_{m_1}\cO_{m_2}\cO_{m_2}}_c'
+ \corr{\cO_{m_1}\cO_{m_2}}_c'\corr{\cO_{m_3}}_c' \\
&& \;\;\;\;\;\;\;\;\; +\corr{\cO_{m_1}\cO_{m_3}}_c'\corr{\cO_{m_2}}_c'
+ \corr{\cO_{m_2}\cO_{m_3}}_c'\corr{\cO_{m_1}}_c'\\
&& \;\;\;\;\;\;\;\;\;  +\corr{\cO_{m_1}}_c'\corr{\cO_{m_2}}_c'\corr{\cO_{m_3}}_c',
\een
and conversely,
\ben
&& \corr{\cO_{m_1}}'_c=\corr{\cO_{m_1}}', \\
&& \corr{\cO_{m_1}\cO_{m_2}}'_c=\corr{\cO_{m_1}\cO_{m_2}}'
- \corr{\cO_{m_1}}'\corr{\cO_{m_2}}', \\
&& \corr{\cO_{m_1}\cO_{m_2}\cO_{m_3}}'_c=\corr{\cO_{m_1}\cO_{m_2}\cO_{m_2}}'
- \corr{\cO_{m_1}\cO_{m_2}}_c'\corr{\cO_{m_3}}' \\
&& \;\;\;\;\;\;\;\;\; -\corr{\cO_{m_1}\cO_{m_3}}'\corr{\cO_{m_2}}'
-\corr{\cO_{m_2}\cO_{m_3}}'\corr{\cO_{m_1}}'\\
&& \;\;\;\;\;\;\;\;\;  +2\corr{\cO_{m_1}}'\corr{\cO_{m_2}}'\corr{\cO_{m_3}}',
\een
etc.
These notations borrowed from quantum field theory
have been used in mathematical literature,
see e.g. Okounkov \cite{Okounkov}.
One can also define the {\em entropy} by
\be
G = \sum_{n\geq 0} t_n \frac{\pd F}{\pd t_n} - F.
\ee

\subsection{Operatorial formal quantum field theory}

Let $H$ be a space with a scalar product with values in the ring $R$,
following physicists,
a vector in $H$ will be denoted by $|v\rangle$.
For a linear operator $A: H \to H$,
the scalar product of $A|v\rangle$ with $|w\rangle$ will be denoted by:
$\langle w|A|v\rangle$.
Let $\cO_0, \cO_1, \dots, \cO_n, \dots$ be a sequence of operators on $H$,
such that $\cO_0 = \id_H$,
and
$$\cO_m \cO_n = \cO_n \cO_m,$$
for all $m, n\geq 0$.
By the {\em operator algebra} generated by $\{\cO_n\}_{n\geq 0}$
we mean the commutative algebra generated by these operators.
Fix two nonzero vectors $|v_0\rangle$ and $|w_0\rangle$ in $H$,
we define the correlators $\corr{\cO_{m_1}, \dots, \cO_{m_n}}$ is defined by:
\be
\corr{\cO_{m_1}, \dots, \cO_{m_n}}
:= \langle w_0 | \cO_{m_1} \cdots \cO_{m_n} |v_0\rangle.
\ee
This gives rise to a formal quantum field theory in the sense of last Subsection.
To summarize,
given a space $H$ with an inner product, an operator algebra $\cO$ generated by a commuting family of
operators  $\{\cO_n\}_{n\geq 0}$ on $H$ with $\cO_0 = \id_H$,
and two fixed vector $|v_0\rangle$ and $|w_0\rangle$,
one can associate  a formal quantum field theory
by defining correlators  as above.
We will refer to such a formal quantum field theory
as an {\em operatorial formal quantum field theory}.

\subsection{Special operatorial formal quantum field theories}

We now consider a special class of formal quantum field theory.
Denote by $\Lambda$ the space of symmetric functions.
Let $\{\alpha_n, \alpha_{-n}\}_{n \geq 1}$ be operators on $\Lambda$ defined by:
\begin{align}
\alpha_n &:= n \frac{\pd}{\pd p_n}, & \alpha_{-n} & : = p_n \cdot,
\end{align}
then the following commutation relations hold:
\be
[\alpha_m, \alpha_n]= m \delta_{m, -n}.
\ee
Denote by $\vac$ the function $1$ in $\Lambda$.
Then $\Lambda$ is spanned by $\alpha_{-\lambda} \vac= \alpha_{-\lambda_1}
\cdots \alpha_{-\lambda_l}\vac$,
where $\lambda=(\lambda_1, \dots, \lambda_l)$ runs through all partitions.
A scalar product on $\Lambda$ can be defined such that
\bea
&& \langle0|0\rangle = 1, \\
&& \alpha_n^*=\alpha_{-n}.
\eea
The space $\Lambda$ is called the {\em bosonic Fock space}.
A basic computation tool on $\Lambda$ is the following Weyl commutation relation:
\be \label{eqn:Weyl1}
\begin{split}
& \exp \sum_{m \geq 0} \frac{1}{m} a_m \alpha_m \cdot
\exp \sum_{n \geq 0} \frac{1}{n} b_n \alpha_{-n} \\
= & \exp \sum_{m \geq 0} \frac{1}{m} a_m b_m \cdot
\exp \sum_{n \geq 0} \frac{1}{n} b_n \alpha_{-n}  \cdot
\exp \sum_{m \geq 0} \frac{1}{m} a_m \alpha_m.
\end{split}
\ee

We will actually work with $\Lambda$ tensored with some coefficient field $F$,
denoted by $\Lambda_F$.
Fixing two vectors $|v_0\rangle = A\vac$, $|w_0\rangle= B\vac$ in $\Lambda_F$,
where $A$ and $B$ are two operators on $\Lambda_F$,
and a family of commuting family of operators $\{\cO_n\}_{n \geq 0}$ on $\Lambda_F$,
one can define a formal quantum field theory.
We will refer to it as a {\em special formal quantum field theory}.
In this case,
the partition function can be written  defined by vacuum expectation values:
\be
Z = \frac{\lvac B^* \cdot \exp \sum_{n \geq 0} t_n \cO_n \cdot A \vac}{\lvac B^*\cdot A \vac}.
\ee
Introduce an operator $L$ defined by:
\be
L: = B^* \cdot \exp \sum_{n \geq 0} t_n \cO_n \cdot A.
\ee
Then its evolution with respect to $t_n$ is given by:
\be
\frac{\pd}{\pd t_n} L = B^* \cdot \cO_n \cdot \exp \sum_{m \geq 0} t_m \cO_m \cdot A.
\ee
It can be regarded as the {\em Lax operator} in this setting if one can find a sequence
of operators $P_n$ such that
\be
[P_n, L] = B^* \cdot \cO_n \cdot \exp \sum_{m \geq 0} t_m \cO_m \cdot A.
\ee

Later in this paper,
we will consider special operatorial formal quantum field theories
in which $\cO_n$ can all be given using vertex operators.
They will be called {\em vertex operatorial formal quantum field theories}.

\section{Backgrounds on Hilbert Schemes and Motivations}

In this Section we recall some earlier results on Hilbert schemes
that motivate this work.

By a well-known theorem of Fogarty \cite{For} in 1968,
when $S$ is a nonsingular projective or quasi-projective
algebraic surface,
the Hilbert scheme $S^{[n]}$
is a nonsingular variety of dimension $2n$.
This result makes it possible to apply various methods to study
the topology of the Hilbert schemes.
Developments since the 1980s have revealed
very rich and deep connections with many other branches of mathematics,
e.g., modular forms, representation of infinite-dimensional algebras,
vertex operators, Boson-Fermion correspondence, symmetric functions,
and integrable hierarchies.
For standard reference to these materials,
see the book by Nakajima \cite{Nak-Lecture} and his more recent lecture notes \cite{Nak-More}.
Based on these earlier developments,
we will present in this paper a method to compute K-theoretical intersection
numbers and cohomological intersection numbers
on Hilbert schemes
based on some connections with the theory of Macdonald polynomials,
by applying the vertex operator realizations
of Macdonald operators.
To better motive the problem we want to solve,
it is instructive to review some of the earlier developments.
In 1987, Ellingsrud and Str{\o}mme \cite{Ell-Str1} calculated the Betti numbers of
the Hilbert schemes of points on the projective
plane, the affine plane, and rational ruled surfaces.
In 1990, G\"ottsche \cite{Got} computed the generating series of
Betti numbers hence also Euler numbers of $S^{[n]}$ by Weil Conjecture.
By G\"ottsche's formula,
the generating function of the Poincar\'e polynomials of $S^{[n]}$ is given by
\be
\sum_{n=0}^\infty P_t(S^{[n]}) q^n
= \prod_{m=1}^\infty
\frac{(1 + t^{2m-1}q^m)^{b_1(S)}(1 + t^{2m+1}q^m)^{b_3(S)}}
{(1 - t^{2m-2}q^m)^{b_0(S)}(1 - t^{2m}q^m)^{b_2(S)}(1 - t^{2m+2}q^m)^{b_4(S)}},
\ee
where $b_i(S)$ is the Betti number of $S$.
By taking $t=-1$,
one gets the generating series of Euler numbers:
\be
\sum_{n \geq 0} \chi(X^{[n]}) q^n = \prod_{n=1}^\infty
\frac{1}{(1-q^n)^{\chi(S)}},
\ee
and a manifest  connection with modular forms.
In 1993, G\"ottsche and Soergel \cite{Got-Soe} computed the generating series of
Hodge numbers,
and there is a similar formula for the generating series of Hodge polynomials of
$S^{[n]}$.
In 1994, Vafa and Witten \cite{Vafa-Witten} put the above formulas
in the context of $N = 4$ topologically twisted supersymmetric Yang-Mills on four-manifolds,
and conjectured  a relation with $2$-dimensional conformal field theory
and string theory.
In particular,
by seeing the orbifold cohomology of symmetric products $\oplus_{n\geq 0}
H^*_{orb}(X^n/S_n)$ as the bosonic Fock space,
an action of infinite-dimensional Heisenberg algebra on $\bH(X):=\oplus_{n\geq 0} H^*(X^{[n]})$
was conjectured.
Furthermore,
an action of Virasoro algebra was conjectured.

In 1995,
Nakajima \cite{Nak1} and Grojnowski \cite{Gro} discovered
geometric realizations of the Heisenberg algebra
action on $\bH(S)$.
In 1998, Lehn \cite{Leh} constructed
geometric realization of Virasoro action on $\bH(S)$
and initiated a program that studies the  ring structure on $H^*(S^{[n]})$
via the deep interaction with Heisenberg algebra structure.
Both in Lehn's work and an earlier work of Eillingsrud-Str{\o}mme \cite{ES2}
in 1993,
the Chern classes of the tautological sheaves have played an important role.
By a combination and generalization of ideas from \cite{Leh} and \cite{ES2},
Li, Qin and Wang \cite{LQW1} found a set of ring generators of $H^*(S^{[n]})$
for an arbitrary $S$ in 2000.
Later these authors \cite{LQW2} found a different set of ring generators
which uses only Nakajima's Heisenberg operators.
In subsequent works \cite{LQW3, LQW5} they also established
the universality and stability of Hilbert schemes
concerning about the relations among the cohomology rings $H^*(S^{[n]})$ when $S$ or $n$ varies.
In \cite{LQW4}, a $W$-algebra was constructed geometrically acting on $\bH(S)$
by these authors.
Qin and Wang \cite{Qin-Wang} axiomatized the results in \cite{LQW1}-\cite{LQW4}
and used the axiomatic approach to establish similar results for the
orbifold chomology of the symmetry products
of an even-dimensional closed complex manifold.
Through all these works,
various algebraic structures  in the theory of vertex operator algebras
have been geometrically  constructed via Hilbert schemes $S^{[n]}$

Another important aspect of the study of Hilbert schemes
is various universal formulas in the study of Hilbert schemes.
The first kind of universal formulas
give universal expressions for characteristic classes
of the tangent bundles or tautological bundles on Hilbert schemes.
The first example of this type was due to Lehn  \cite{Leh}:
\be
\sum_{n\geq 0} c(L^{[n]})z^n = \exp \biggl( \sum_{m \geq 1} \frac{(-1)^{m-1}}{m} q_m(c(L))z^m\biggr),
\ee
where $L \to S$ is a line bundle, $L^{[n]} \to S^{[n]}$ is the tautological bundle
associated with $L$.
See Boissi\`ere and Nieper-Wi{\ss}kirchen \cite{Boi-Nie} for more examples.
One of their results that concerns us is as follows:
There are unique rational constants $\alpha_\lambda$, $\beta_\lambda$,
$\gamma_\lambda$, $\delta_\lambda$ such that for each surface $S$ and each vector bundle
$F$ on $S$, the generating series of the Chern characters of the tautological bundles of
$F$ is given by:
\be
\begin{split}
\sum_{n \geq 0} \ch(F^{[n]}) & =
\sum_{\lambda\in \cP} \biggl(
\alpha_\lambda q_\lambda(\ch(F )) + \beta_\lambda q_\lambda(e_S \ch(F )) \\
& + \gamma_\lambda q_\lambda(K_S \ch(F )) + \delta_\lambda q_\lambda(K_S^2 \ch(F )) \biggr)\vac.
\end{split}
\ee
The coefficients $\alpha_\lambda$ and $\beta_\lambda$ are known from the
work of Li-Qin-Wang \cite{LQW5},
but unfortunately the closed expressions for $\gamma_\lambda$ and $\delta_\lambda$ are not known.

Another kind of universality results are due to
Ellingrud-G\"ottsche-Lehn  \cite{EGL}.
The following two results will be useful to us.
First, let $P$ be a polynomial in the Chern classes of the tangent
bundle $TS^{[n]}$ and the Chern classes of $F_1^{[n]}, \dots, F_l^{[n]}$.
Then there is a universal polynomial $\tilde{P}$,
depending only on $P$, in the Chern classes of $TS$,
the ranks
$r_1,\dots,r_l$ and the Chern classes of $F_1, \dots ,F_l$ such that
$$\int_{S^{[n]}} P = \int_S \tilde{P}.$$
Secondly,
let $\Psi: K \to H$ be a multiplicative function,
also let $\varphi(x) \in \bQ[[x]]$ be a formal power series and, for
a complex manifold $X$ of dimension $n$ put $\Phi(X) := \varphi(x1)\cdots \varphi(x_n)$
with $x-1,\dots,x_n$ the Chern roots of $TX$.
Then for each integer $r$ there are universal power series $A_i \in \bQ[[z]]$,
$i = 1,\dots, 5$, depending only on $\Psi$, $\Phi$ and $r$,
such that for each $x \in K(S)$ of rank $r$
one has
\be
\begin{split}
& \sum_{n \geq0} z^n \int_{S^{[n]}} \Psi(x^{[n]}) \Phi(S^{[n]}) \\
= & \exp(c^2_1(x)A_1 + c_2(x)A_2 + c_1(x)c_1(S)A_3 + c^2_1(S)A_4 + c_2(S)A_5).
\end{split}
\ee
As pointed out in \cite{EGL},
the series $A_i$($i = 1, \dots,5$) can be determined by doing the computations
for $(\bP^2, r\cdot 1)$, $(\bP^2, \cO(1)+(r-1)\cdot 1)$, $(\bP^2, \cO(2)+(r-1)\cdot 1)$,
$(\bP^2,2\cO(1)+(r-2)\cdot 1)$,
and $(\bP^1 \times \bP^1, r \cdot 1)$.
So it suffices to do computations
for sheaves on $\bP^2$ and $\bP^1 \times \bP^1$,
which are equivariant sheaves on toric surfaces.
The torus action on a toric surface $S$ induces a torus action
on $S^{[n]}$,
so after an application of  localization formula,
one can reduce to compute equivariant intersection numbers on $(\bC^2)^{[n]}$.
This turns out to be an effective method for computing intersection numbers.
This is the approach taken by the author and his collaborators
in a series of papers \cite{Liu-Yan-Zhou, Wang-Zhou1, Wang-Zhou2},
of which this paper is a natural continuation.

Working in the equivariant setting and applying localization formula
on Hilbert schemes dates back to the 1987 paper of Ellingsrud-Str{\o}mme  \cite{Ell-Str1}.
In the 1990s,
Haiman and his collaborators (see e.g. \cite{Haiman})
related equivariant K-theory of Hilbert schemes $(\bC^2)^{[n]}$
to Macdonald polynomials and found deep applications in combinatorics.
Nakajima \cite{Nak3} related equivariant cohomology of
Hilbert schemes of a line bundle over $\bP^1$ to
Jack polynomials.
Recall that Jack polynomials depend on some parameter $\alpha$.
In this case,
$\alpha$ is the negative of the degree of the line bundle.
Motivated by this result,
Vasserot \cite{Vas} identified the ring $H^{2n}_T((\bC^2)^{[n]})$ with the class algebra
of the symmetric group $S_n$ in 2002.
Here the $T=\bC^*$-action on $S =\bC^2$ is defined by
$t\cdot (x, y)=(tx, t^{-1}y)$.
In this setting,
the fixed points of the circle action
on $(\bC^2)^{[n]}$ are parameterized by partitions $\{\lambda\;|\;|\lambda|=n\}$ of $n$,
and their classes correspond to Schur functions $\{s_\lambda\;|\;|\lambda|=n\}$.
Once a connection with the theory of symmetric functions
is set up,
one can apply its connection to integrable hierarchies as developed by
the Kyoto school \cite{MJD}
and its connection to Gromov-Witten invariants of algebraic curves
developed by Okounkov-Pandharipande \cite{Oko-Pan}.
Furthermore, one can apply a connection to Bloch-Okounkov formula \cite{BO}
via boson-feermion correspondence.
This was done by  Li-Qin-Wang \cite{LQW6} in 2003.
In 2004,
these authors \cite{LQW7} established another link between equivariant cohomolgy
of Hilbert schemes and Jack polynomials.
They considered the following one-torus action on $\bC^2$:
$t \cdot (x, y) = (t^{\alpha}x, t^{-\beta}y)$.
In this case the parameter for the Jack polynomials is $\beta/\alpha$.
See Nakajima's lecture notes \cite{Nak-More}
for a review of some results on equivariant chomology of Hilbert schemes $(\bC^2)^{[n]}$.

The above results that we have reviewed so far only concern the conjecture
of Vafa-Witten \cite{Vafa-Witten} on the relation with 2-dimensional conformal field
theories.
In their discussion of $N=4$ supersymmetric gauge theory
in the end of \cite{Vafa-Witten},
Vafa and Witten have focused on moduli spaces on K3 surfaces and $T^4$.
They remarked that:``So it is
conceivable that $K3$ could be replaced here by another, perhaps non-compact, space."
As it turns out,
the  conjectured relation with string theory
has gradually been developed  into what is called ``geometric engineering"
of $N=2$ gauge theories  from noncompact toric Calabi-Yau 3-folds
first proposed in 1996 by Katz, Klemm and Vafa \cite{Katz-Klemm-Vafa}.
Furthermore,
while  the partition function of $N=4$ gauge theory is the generating series
of the Euler characteristics of the moduli spaces,
for $N=2$ gauge theory on $\bR^4$ one considers the Nekrasov partition function \cite{Nek}
which can be defined by localization on the moduli spaces
of framed torison sheaves on $\bP^2$ with respect to a torus action
(see Nakajima-Yoshioka \cite{Nak-Yos-Lect} for the mathematical backgrounds).
More precisely,
let $M(r, k)$ denote the framed moduli space
of torsion free sheaves on $\bP^2$ with rank $r$ and $c_2 = k$.
By framing  we mean a trivialization of the sheaf restricted to the line at infinity.
In particular for $r = 1$, $M(1,k) =(\bC^2)^{[k]}$.
On the gauge theory side we are interested in
the generating series of equivariant Riemann-Roch numbers
\be
\sum_{k\geq 0} Q^k\chi(M(r, k), K_{r,k}^{1/2} \otimes  (\det \bV)^m)(e_1,\dots, e_r, t_1, t_2)
\ee
with respect to some $T^r\times T^2$-action,
where $K_{r,k}$ is the canonical line bundle of $M(r, k)$,
and $\bV$ is some tautological bundle on $M(r,k)$.
The idea of geometric engineering is to identify this partition function
with generating series of local Gromov-Witten invariants
of some toric Calabi-Yau 3-fold which is a fibration over $\bP^1$ with ALE spaces
of type $A_{r-1}$ as fibers.
This was done by Li-Liu-Zhou \cite{Li-Liu-Zho} in 2004
using the theory of the topological vertex \cite{AKMV, LLLZ},
together with some combinatorial results in \cite{Zhou}.
When the rank is one,
the relevant moduli spaces are the Hilbert schemes $(\bC^2)^{[n]}$,
the tautological bundle is the tautological bundle $\xi_n$ induced
from the trivial bundle $\cO_{\bC^2}$.
The following result was proved in Yang-Zhou \cite{Yang-Zhou} in 2011:
\be
Z_{\cO_{\bP^1}(k) \oplus \cO_{\bP^1}(-k - 2)}(\lambda, t) = \sum_{n\geq 0}
((-1)^kt)^n\chi((\bC^2)^{[n]}, (\det \xi_n)^{k+1})(q^{-1}, q),
\ee
where $Z_{\cO_{\bP^1}(k) \oplus \cO_{\bP^1}(-k - 2)}(\lambda, t)$
is the generating series of local Gromov-Witten invariants
of $\cO_{\bP^1}(k) \oplus \cO_{\bP^1}(-k - 2)$.
Hence geometric engineering
directly relates curve counting to $K$-theoretical intersection
numbers on Hilbert schemes.
(For other applications of Hilbert schemes to curve counting problem,
see \cite{Guo-Zhou} and the references therein.)
The motivation behind \cite{Li-Liu-Zho} and \cite{Yang-Zhou}
is the computation of Gopakumar-Vafa invariants of the corresponding
toric Calabi-Yau 3-folds.
The idea is to find analogues of G\"ottsche's formula
for K-theoretical intersection numbers on Hilbert schemes.
Motivated by this, in 2006 the author made the following conjecture:
The following identity holds for $k \geq 2$:
\begin{equation} \label{eqn:Main}
\begin{split}
& \sum_{n \geq 0} Q^n \chi((\bC^k)^{[n]},
\Lambda_{-u} \xi^A_n \otimes \Lambda_{-v}(\xi^A_n)^*)(t_1, \dots, t_k) \\
=& \exp ( \sum_{n=1}^{\infty}
\frac{(1-u^nt^{nA})(1-v^nt^{-nA})Q^n}{n \prod_{i=1}^k (1-t_i^n)}),
\end{split}
\end{equation}
where
$$t^{nA} = t_1^{na_1} \cdots t_k^{na_k}.$$
Some special cases were proved then by relating the local contributions
from the fixed points to specializations of Macdonald polynomials.
In 2011,
Zhilan Wang realized that a slight modification can be used to prove the general case,
hence another connection to Macdonald polynomials was
established in \cite{Wang-Zhou1} in the setting of equivariant $K$-theory of
$(\bC^2)^{[n]}$.

Nekerasov \cite{Nek} made a conjecture relating his partition functions
to Seiberg-Witten prepotential.
Two different proofs appeared in 2003.
The proof by Nakajima and Yoshioka \cite{Nak-Yos1} derived an equation
for the partition function called the blowup equation
by considering the instanton moduli space on $\bC^2$ blown up at the origin.
They also proved the K-theoretic version using the same idea \cite{Nak-Yos2}.
See also \cite{Nak-Yos-Lect}.
In the paper by Nekrasov and Okounkov where they gave a different proof,
the following result result was proved in the the rank $r=1$ case:
\be
\sum_{\lambda \in \cP} q^{|\lambda|}
\prod_{\Box \in \lambda} \frac{h(\Box)^2- m^2}{h(\Box)^2}  \\
= \tr \cV_m(1)|_{\cF_0}
= \prod_{n > 0} (1-q^n)^{m^2-1},
\ee
where $\cF_0$ denotes the charge zero part of the fermionic Fock space,
and $\cV_m(z)$ is the vertex operator:
\be
\cV_m(z) : = \exp\biggl(-m \sum_{n>0} \alpha_{-n} \frac{z^n}{n}\biggr)
\exp \biggl( m \sum_{n>0} \alpha_n \frac{z^{-n}}{n} \biggr).
\ee
The geometric realization of this vertex operator on Hilbert schemes
has been discovered by Carlsson and Okounkov \cite{Car1, Car2, Car-Oko}.
With the help of this vertex operator,
it is possible to evaluate intersection numbers of the form:
\be
F(k_1, \dots, k_N):=\sum_{n\geq 0} q^n \int_{S^{[n]}} \ch_{k_1}(L_1^{[n]}) \cdots \ch_{k_N}(L_N^{[n]}) e(T_mS^{[n]})
\ee
where $e(T_mS^{[n]}) = \sum_j m^j c_{2n-j}(TS^{[n]})$ can be understood as the Chern polynomial
of $TS^{[n]}$.
In \cite{Car1, Car2, Car-Oko},
many examples of type $F(k_1, \dots, k_N)$ have been shown to lead to quasimodular forms.
More generally,
Okounkov \cite{Okounkov} made a conjecture connecting
cohomological intersection numbers on Hilbert schemes to q-zeta values.
This conjecture has been verified up to lower weight terms
by Qin and Yu \cite{Qin-Yu} based on the vertex operator of Carlsson and Okounkov
\cite{Car1, Car-Oko} and
the Chern character operators \cite{LQW4}.
See also the more recent work by Qin and Shen \cite{Qin-Shen}.

Inspired by the above work on the Okounkov conjecture,
the author \cite{Zhou2018} related the computations of intersection numbers of the form
\be
\sum_{n \geq 0} q^n
\int_{(\bC^2)^{[n]}_{S^1}} \prod_{j=1}^N
\sum_{k_j\geq0} z_j^{k_j}\ch_{k_j}(\xi_n^{A_j})_{S^1} \cdot e_{S^1}(T\bC^{[n]})
\ee
to Bloch-Okounkov formula \cite{BO}
where the circle acts on $\bC^2$ by $t\cdot (x, y) = (tx, t^{-1}y)$,
related the two-torus-equivariant version of it, i.e., 
\ben
&& \sum_{n \geq 0} q^n
\int_{(\bC^2)^{[n]}} \prod_{j=1}^N
\sum_{k_j\geq0} z_j^{k_j}\ch_{k_j}(\xi_n^{A_j})_{T} \cdot e_{T}(T\bC^{[n]}),
\een
to the generalized Bloch-Okounkov correlation functions introduced by Cheng and Wang \cite{Cheng-Wang}.

Now we can state the problem we want to solve in this paper:
We want to find a general algorithm to compute equivariant
cohomological intersection numbers of the form:
\ben
&& \sum_{n \geq 0} q^n
\int_{(\bC^2)^{[n]}} \prod_{j=1}^N \ch_{k_j}(\xi_n^{A_j})_{T},
\een
or their generating series:
\ben
&& \sum_{n \geq 0} q^n
\int_{(\bC^2)^{[n]}} \prod_{j=1}^N
\sum_{k_j\geq0} z_j^{k_j} \ch_{k_j}(\xi_n^{A_j})_{T}.
\een
We will use a strategy that has been widely used:
If the problem does not seem to have a good answer,
try a more complicated related problem.
For us we first change the problem to a problem in equivariant $K$-theory by taking
each $z_i$ to be a positive integer $m_i$,
then
\be
\sum_{k_i \geq 0} m_i^{k_i} \ch_{k_i}(\xi^{A_i}_n)
= \ch (\psi_{m_i}(\xi_n^{A_i})).
\ee
where $\psi_m$ are the Adams operations.
So we are interested in the problem of computing equivariant Riemann-Roch numbers:
\be
\chi((\bC^2)^{[n]}, \psi_{m_1}(\xi_n^{A_1}) \otimes \cdots \otimes
\psi_{m_N}(\xi_n^{A_N}))(t_1, t_2),
\ee
As it turns out,
we need to consider an even more complicated problem of
computing the equivariant Riemann-Roch numbers of the form:
\be
\chi((\bC^2)^{[n]}, \psi_{m_1}(\xi_n^{A_1}) \otimes \cdots \otimes \psi_{m_N}(\xi_n^{A_N})
\otimes \Lambda_{-u}\xi_n^A \otimes \Lambda_{-v} (\xi_n^A)^*)(t_1, t_2),
\ee
and after obtaining results for computing them, take $u=v=0$.
The inclusion of $\Lambda_{-u}\xi_n^A \otimes \Lambda_{-v} (\xi_n^A)^*$
is clearly inspired by Conjecture 1 and the results in \cite{Wang-Zhou1}.
This is the crucial step that enables us to combine the two connections to Macdonald
functions discovered by Haiman \cite{Haiman} and Wang-Zhou \cite{Wang-Zhou1}
in the same framework,
and apply some vertex operator realizations of the Macdonald operators.

Our goal is to combine and generalize all the ideas in the works mentioned in this Section together.
In the rest of this paper we will report on some progresses in this direction.

\section{Equivariant Intersection Numbers on $(\bC^2)^{[n]}$ by Localization Formulas}

In this Section
we review how to compute some equivariant intersection numbers
in equivariant K-theory and equivariant cohomology theory of $(\bC^2)^{[n]}$ by localizations\emph{\emph{}}.

\subsection{Preliminaries for localizations on Hilbert schemes of the affine plane}

See \cite{Nak-Lecture} for references on equivariant indices
on Hilbert schemes of points on $\bC^2$.
The torus action on $\bC^2$ given by
$$(t_1, t_2) \cdot (x, y) = (t_1^{-1} x,  t_2^{-1} y)$$
on linear coordinates induces an action on $(\bC^2)^{[n]}$.
The fixed points are isolated and parameterized by partitions
$\lambda=(\lambda_1, \dots, \lambda_l)$ of weight $n$.
The weight decomposition of the tangent bundle of $T(\bC^2)^{[n]}$ at a
fixed point $I_\lambda$ indexed by a partition $\lambda$ is given by \cite{Nak-Lecture}:
\begin{eqnarray} \label{eqn:Weights}
&& \sum_{s \in \lambda} (t_1^{l(s)} t_2^{-(a(s)+1)}
+ t_1^{-(l(s)+1)}t_2^{a(s)}).
\end{eqnarray}
It follows that the equivariant Euler class is given at $I_\lambda$ by:
\be
e_T(T(\bC^2)^{[n]})|_{I_\lambda}
= \prod_{s \in \lambda} (l(s)w_1-(a(s)+1)w_2) (-(l(s)+1)w_1 +a(s)w_2).
\ee

For a vector $A =(a, b) \in \bZ^2$,
denote by $\cO_{\bC^2}^A$ the $T^2$-equivariant line bundle on $\bC^2$
with weight $A$,
and let $\xi_n^A$ be its associated tautological bundles.
Then one has the following weight decomposition at the fixed points:
\begin{eqnarray*}
&& \xi_n^A|_{I_{\lambda}}
= \sum_{s \in \lambda} t_1^{l'(s)}t_2^{a'(s)}t_1^{a}t_2^{b}.
\end{eqnarray*}
So their equivariant Chern characters at the fixed points are given by the following formula:
\be
\begin{split}
\ch(\xi_n^A)_T|_{I_{\lambda}} 
& = \sum_{s \in \lambda} e^{(l'(s)+a)w_1+(a'(s)+b)w_2}.
\end{split}
\ee
In particular,
\be
\begin{split}
\ch_k(\xi_n^A)_T|_{I_{\lambda}}
& = \frac{1}{k!} \sum_{s \in \lambda} ((l'(s)+a)w_1+(a'(s)+b)w_2)^k.
\end{split}
\ee
In this Subsection,
the following notations have been used:
\begin{align*}
a_{\mu}(s) & = \mu_i - j, & a_{\mu}'(s) & = j-1,\\
l_{\mu}(s) & = \mu^t_j - i, & l_{\mu}'(s) & = i -1.
\end{align*}

\subsection{Some equivariant K-theoretical intersection numbers}

For the equivariant K-theoretical case,
we are interested in the equivariant Riemann-Roch numbers:
\be
\chi((\bC^2)^{[n]}, \xi_n^{A_1} \otimes \cdots \otimes \xi_n^{A_N})(t_1, t_2),
\ee
where $A_j = (a_j, b_j) \in \bZ^2$.
By holomorphic Lefschetz formula \cite{Ati-Sin},
\begin{equation} \label{eqn:K-Int-1}
\begin{split}
& \sum_{n \geq 0} Q^n
\chi((\bC^2)^{[n]}, \bigotimes_{j=1}^N \xi_n^{A_j} )(t_1, t_2) \\
= & \sum_{\mu} Q^{|\mu|} \prod_{j=1}^N \sum_{s\in \mu} t_1^{l'(s)+a_j}t_2^{a'(s)+b_j}
 \\
& \cdot \prod_{s \in \mu}
\frac{1}{(1- t_1^{-l(s)} t_2^{a(s)+1})(1 - t_1^{l(s)+1}t_2^{-a(s)})}.
\end{split}
\end{equation}
Inspired by the proof of Theorem 1  in \cite{Wang-Zhou1},
we will instead first compute the more complicated equivariant Riemann-Roch numbers:
\be
\chi((\bC^2)^{[n]}, \xi_n^{A_1} \otimes \cdots \otimes \xi_n^{A_N}
\otimes \Lambda_{-u}\xi_n^A \otimes \Lambda_{-v} (\xi_n^A)^*)(t_1, t_2),
\ee
where $A =(a,b)$,
and then take $u=v=0$.
By holomorphic Lefschetz formula,
\begin{equation} \label{eqn:K-Int0}
\begin{split}
& \sum_{n \geq 0} Q^n
\chi((\bC^2)^{[n]}, \bigotimes_{j=1}^N \xi_n^{A_j} \otimes
\Lambda_{-u}\xi_n^A \otimes \Lambda_{-v} (\xi_n^A)^*)(t_1, t_2) \\
= & \sum_{\mu} Q^{|\mu|} \prod_{j=1}^N \sum_{s\in \mu} t_1^{l'(s)+a_j}t_2^{a'(s)+b_j}
 \\
& \cdot \prod_{s \in \mu}
\frac{(1-ut^At_1^{l'(s)}t_2^{a'(s)}) \cdot(1- vt^{-A}t_1^{-l'(s)}t_2^{-a'(s)})}
{(1- t_1^{-l(s)} t_2^{a(s)+1})(1 - t_1^{l(s)+1}t_2^{-a(s)})}.
\end{split}
\end{equation}
This will enable us to exploit the connection to Macdonald polynomials as in \cite{Wang-Zhou1}.

We make some digressions on symmetric functions before we discuss more K-theoretical intersection numbers.

\subsection{Basis of the space $\Lambda$ of symmetric functions}

As a vector space, $\Lambda$ has several natural additive basis.
For our purpose,
we will use the following three of  them:
$\{e_\lambda\}_{\lambda\in \cP}$, $\{h_\lambda\}_{\lambda\in\cP}$, $\{p_\lambda\}_{\lambda\in \cP}$.
Here for a partition $\lambda = (\lambda_1, \dots, \lambda_l)$,
$e_\lambda = e_{\lambda_1} \cdots e_{\lambda_l}$,
$h_\lambda = h_{\lambda_1} \cdots h_{\lambda_l}$,
and $p_\lambda = p_{\lambda_1} \cdots p_{\lambda_l}$,
where $e_n$, $h_n$ and $p_n$ are the {\em elementary}, {\em complete},
and {\em Newton} symmetric functions, respectively.
Their generating series are defined by:
\bea
&& E(t) = \sum_{n\geq 0} e_n t^n, \\
&& H(t) = \sum_{n\geq 0} h_n t^n, \\
&& P(t) = \sum_{n \geq 1} \frac{p_n}{n} t^n.
\eea
They are related to each other as follows:
\begin{align} \label{eqn:SymmetricFunct}
H(t) & = \frac{1}{E(-t)}, &
P(t) & = - \log E(-t) = \log H(t).
\end{align}

\subsection{Action of the space $\Lambda$  on $K$-theory}

There is an action of $\Lambda$ on $K(X)$  using the power operations on $K(X)$ (see e.g. Atiyah \cite{Atiyah-K}).
First, define a map $\Lambda \to \End(K(X))$  by
by $p_\mu \mapsto \prod_{i=1}^l \psi^{\mu_i}: K(X) \to K(X)$
where $\psi^n$ are the Adams operations.
Under this map $e_\mu \mapsto \lambda^\mu = \lambda^{\mu_1} \cdots \lambda^{\mu_l}$,
where $\lambda^i: K(X) \to K(X)$ is defined as follows:
\be
\sum_{i\geq 0} t^i \lambda^i(E-F) = \sum_{i\geq 0} t^i \Lambda^i(E) \sum_{j\geq 0} (-t)^j S^j(F).
\ee
Similarly,
$h_\mu \mapsto \sigma^\mu = \sigma^{\mu_1} \cdots \sigma^{m_l}$,
where $\sigma^i£º K(X) \to K(X)$ is defined by
\be
\sum_{i\geq 0} t^i \sigma^i(E-F) = \sum_{i\geq 0} t^i S^i(E) \sum_{j\geq 0} (-t)^j \Lambda^j(F).
\ee
Indeed,
\eqref{eqn:Symmetric} now becomes
\begin{align} \label{eqn:K-Oper}
\sum_{i \geq 0} t^i \sigma^i & = \frac{1}{\sum_{i\geq 0} (-t)^i \lambda^i}, &
\sum_{i\geq 0} \frac{t^i}{i} \psi^i  & = - \log \sum_{i\geq 0} (-t)^i \lambda^i
= \log \sum_{i \geq 0} t^i \sigma^i.
\end{align}
These operations can also be defined in equivariant $K$-theory.

\subsection{More K-theoretical intersection numbers}

Note one has
\be
\ch(\psi^m(\xi_n^A))_T|_{I_\lambda} = \sum_{s \in \lambda} e^{m[(l'(s)+a)w_1+(a'(s)+b)w_2]}.
\ee
In particular,
\be
\begin{split}
\ch_k(\psi^m(\xi_n^A))_T|_{I_{\lambda}}
& = \frac{m^k}{k!} \sum_{s \in \lambda} ((l'(s)+a)w_1+(a'(s)+b)w_2)^k \\
& = m^k \cdot \ch(\xi_n^A)_T|_{I_\lambda}.
\end{split}
\ee

We are interested in the equivariant Riemann-Roch numbers:
\be
\chi((\bC^2)^{[n]}, \psi^{m_1}(\xi_n^{A_1}) \otimes \cdots \otimes
\psi^{m_N}(\xi_n^{A_N}))(t_1, t_2),
\ee
where $A_j = (a_j, b_j) \in \bZ^2$.
Again we will instead first compute the equivariant Riemann-Roch numbers:
\be
\chi((\bC^2)^{[n]}, \psi^{m_1}(\xi_n^{A_1}) \otimes \cdots \otimes \psi^{m_N}(\xi_n^{A_N})
\otimes \Lambda_{-u}\xi_n^A \otimes \Lambda_{-v} (\xi_n^A)^*)(t_1, t_2).
\ee
By holomorphic Lefschetz formula,
\begin{equation} \label{eqn:K-Int}
\begin{split}
& \sum_{n \geq 0} Q^n
\chi((\bC^2)^{[n]}, \bigotimes_{j=1}^N \psi^{m_j}(\xi_n^{A_j})
\otimes
\Lambda_{-u}\xi_n^A \otimes \Lambda_{-v} (\xi_n^A)^*)(t_1, t_2) \\
= & \sum_{\mu} Q^{|\mu|} \prod_{j=1}^N \sum_{s\in \mu} e^{m_j[(l'(s)+a_j)w_1+(a'(s)+b_j)w_2]}
 \\
& \cdot \prod_{s \in \mu}
\frac{(1-ut^At_1^{l'(s)}t_2^{a'(s)}) \cdot(1- vt^{-A}t_1^{-l'(s)}t_2^{-a'(s)})}
{(1- t_1^{-l(s)} t_2^{a(s)+1})(1 - t_1^{l(s)+1}t_2^{-a(s)})}.
\end{split}
\end{equation}
We also consider other intersection numbers by changing $\psi^{m_j}$ to $\lambda^{m_j}$ or $\sigma^{m_j}$.
We will also establish a connection of all these to Macdonald polynomials
as in \cite{Wang-Zhou1}.

One can also consider the generating series of these series:
\ben
&& \sum_{m_1, \dots, m_N \geq 0} \frac{\prod_{j=1}^N a^{A_j}_{m_j}}{N!}  \sum_{n \geq 0} Q^n
\chi((\bC^2)^{[n]}, \bigotimes_{j=1}^N \psi^{m_j}(\xi_n^{A_j})
\otimes
\Lambda_{-u}\xi_n^A \otimes \Lambda_{-v} (\xi_n^A)^*)(t_1, t_2), \\
&& \sum_{m_1, \dots, m_N \geq 0} \frac{\prod_{j=1}^N b^{A_j}_{m_j}}{N!}  \sum_{n \geq 0} Q^n
\chi((\bC^2)^{[n]}, \bigotimes_{j=1}^N \lambda^{m_j}(\xi_n^{A_j})
\otimes
\Lambda_{-u}\xi_n^A \otimes \Lambda_{-v} (\xi_n^A)^*)(t_1, t_2), \\
&& \sum_{m_1, \dots, m_N \geq 0} \frac{\prod_{j=1}^N c^{A_j}_{m_j}}{N!}  \sum_{n \geq 0} Q^n
\chi((\bC^2)^{[n]}, \bigotimes_{j=1}^N \sigma^{m_j}(\xi_n^{A_j})
\otimes
\Lambda_{-u}\xi_n^A \otimes \Lambda_{-v} (\xi_n^A)^*)(t_1, t_2), \\
\een
where $\{a^A_m, b^A_m, c^A_m\}_{m \geq 0}$ are some formal variables,
indexed by $A\in \bZ^2$.
These generating series are related to each other if we require
\be
\sum_{\mu \in \cP} a^A_\mu \psi^\mu = \sum_{\mu\in \cP} b^A_\mu \lambda^\mu
= \sum_{\mu \in \cP} c^A_\mu \sigma^\mu
\ee
for each $A \in \bZ^2$.

\subsection{Some equivariant cohomological intersection numbers}

Similarly,
in the equivariant cohomological case,
we are interested in the equivariant intersection numbers:
\begin{equation}
\begin{split}
& \sum_{n \geq 0} q^n
\int_{(\bC^2)^{[n]}_T} \ch_{k_1}(\xi_n^{A_1})_T \cdots \ch_{k_N}(\xi_n^{A_n})_T.
\end{split}
\end{equation}
In \cite{Zhou2018} we have considered a different type of equivariant intersection numbers:
\be
\sum_{n \geq 0} q^n
\int_{(\bC^2)^{[n]}_T} \ch_{k_1}(\xi_n^{A_1})_T \cdots \ch_{k_N}(\xi_n^{A_n})_T \cdot e_T(T\bC^{[n]}).
\ee
The motivation there was Okounkov's Conjecture.
By localization formula we have:
\begin{equation} \label{eqn:Coh-Int}
\begin{split}
& \sum_{n \geq 0} q^n
\int_{(\bC^2)^{[n]}_T} \ch_{k_1}(\xi_n^{A_1})_T \cdots \ch_{k_N}(\xi_n^{A_n})_T  \\
= & \sum_{\lambda} q^{|\lambda|}
\prod_{j=1}^N \frac{1}{k_j!} \sum_{s \in \lambda} ((l'(s)+a_j)w_1+(a'(s)+b_j)w_2)^{k_j} \\
 & \cdot \prod_{s \in \lambda} \frac{1}{(l(s)w_1-(a(s)+1)w_2) (-(l(s)+1)w_1 +a(s)w_2)}
\end{split}
\end{equation}
and for the intersection numbers with the equivariant Euler class:
\begin{equation}
\begin{split}
& \sum_{n \geq 0} q^n
\int_{(\bC^2)^{[n]}_T} \ch_{k_1}(\xi_n^{A_1})_T \cdots \ch_{k_N}(\xi_n^{A_n})_T \cdot e_T(T\bC^{[n]})  \\
= & \sum_{\lambda} q^{|\lambda|}
\prod_{j=1}^N \frac{1}{k_j!} \sum_{s \in \lambda} ((l'(s)+a_j)w_1+(a'(s)+b_j)w_2)^{k_j}.
\end{split}
\end{equation}
It was explained in \cite{Zhou2018} that the latter can be computed by first considering the generating series:
\ben
&& \sum_{n \geq 0} q^n
\int_{(\bC^2)^{[n]}_{S^1}} \prod_{j=1}^N
\sum_{k_j\geq0} z_j^{k_j}\ch_{k_j}(\xi_n^{A_j})_{T} \cdot e_{T}(T\bC^{[n]}) \\
& = & \sum_{\lambda} q^{|\lambda|} \prod_{k=1}^N \sum_{(i, j) \in \lambda}
e^{z_k[(i-1+a_k)t_1+(j-1+b_k)t_2]} \\
& = & e^{\sum_{j=1}^N (a_jt_1+b_jt_2)z_j} \cdot
\sum_{\lambda} q^{|\lambda|} \prod_{k=1}^N \sum_{s \in \lambda}
e^{z_k[l'_\lambda(s)t_1+a'_\lambda(s)t_2]},
\een
then reducing to the deformed $n$-point function
\be
\corr{\mB_\lambda(e^{z_1t_2}, e^{z_1t_1}) \cdots \mB_\lambda(e^{z_Nt_2}, e^{z_Nt_1})}_q
\ee
introduced by Cheng and Wang \cite{Cheng-Wang}.
In order to compute \eqref{eqn:Coh-Int} using the same ideas,
we first compute
\begin{equation} \label{eqn:Coh-Int2}
\begin{split}
& \sum_{n \geq 0} q^n
\int_{(\bC^2)^{[n]}_T} \ch_{k_1}(\xi_n^{A_1})_T \cdots \ch_{k_N}(\xi_n^{A_n})_T
\cdot c_x(\xi_n^A)_T\cdot c_y((\xi_n^A)^*)_T   \\
& = \sum_{\lambda} q^{|\lambda|}
\prod_{j=1}^N \frac{1}{k_j!} \sum_{s \in \lambda} ((l'(s)+a_j)w_1+(a'(s)+b_j)w_2)^{k_j} \\
\cdot & \prod_{s \in \lambda} \frac{(x+(l'(s)+a)w_1+(a'(s)+b)w_2)
 \cdot(y-(l'(s)+a)w_1-(a'(s)+b)w_2)}{(l(s)w_1-(a(s)+1)w_2) (-(l(s)+1)w_1 +a(s)w_2)},
\end{split}
\end{equation}
then take the coefficients of $q^nx^ny^n$.
Here $c_x(E)$ denotes the Chern polynomial of $E$:
$c_x(E) = \sum_{j=0}^r x^{r-j} c_j(E)$.
One can understand \eqref{eqn:K-Int-2} as a quantization of \eqref{eqn:Coh-Int2}.

\section{Localization on Hilbert Schemes and Macdonald Functions}

We rewrite \eqref{eqn:K-Int0} as follows:
\begin{equation} \label{eqn:K-Int-2}
\begin{split}
& \sum_{n \geq 0} Q^n
\chi((\bC^2)^{[n]}, \bigotimes_{j=1}^N \xi_n^{A_j}
\otimes
\Lambda_{-u}\xi_n^A \otimes \Lambda_{-v} (\xi_n^A)^*)(t_1, t_2) \\
= & \prod_{j=1}^N t_1^{a_j}t_2^{b_j} \cdot
\sum_{\mu} (-ut^AQ)^{|\mu|} \prod_{j=1}^N \sum_{s\in \mu} t_1^{l'(s)}t_2^{a'(s)}
 \\
& \cdot \prod_{s \in \mu}
\frac{(t_2^{a'(s)}- vt^{-A}t_1^{-l'(s)})}{(1- t_1^{-l(s)} t_2^{a(s)+1})}
\frac{(t_1^{l'(s)}-(ut^A)^{-1}t_2^{-a'(s)})}
{(1 - t_2^{-a(s)}t_1^{l(s)+1})}.
\end{split}
\end{equation}
One can recognize the following three expressions on the right-hand side.
First the expression
\begin{align}
& \sum_{s\in \mu}  t_2^{a'(s)}\cdot t_1^{l'(s)}
\end{align}
is related to the eigenvalues of Macdonald operator $E$,
and this fact has been used by Haiman \cite{Haiman} to relate Macdonald polynomials to Hilbert schemes.
Secondly,
the expressions
\begin{align}
\prod_{s \in \mu}
\frac{(t_2^{a'(s)}- vt^{-A}t_1^{-l'(s)})}{(1- t_1^{-l(s)} t_2^{a(s)+1})},
&& \prod_{s\in \mu}
\frac{(t_1^{l'(s)}-(ut^A)^{-1}t_2^{-a'(s)})}
{(1 - t_2^{-a(s)}t_1^{l(s)+1})}
\end{align}
have been recognized by Wang and the author \cite{Wang-Zhou1}
as the specializations of the Macdonald polynomials $P_\mu(x;q,t)$ and $Q_\mu^t(x;t,q)$
respectively,
and hence once again a connection between Macdonald polynomials and Hilbert schemes
was observed.

In this Section we will first recall these connections,
then reformulate them in term of operators on the bosonic Fock space.
This provides a method to compute the right-hand side of \eqref{eqn:K-Int-2}
and its generalizations obtained by power operations in K-theory.

\subsection{Operator formulation of orthogonality of Macdonald polynomials}

Denote by $\Lambda^x$ the space of symmetric functions in $x = (x_1, \dots, x_n, \dots)$,
and let $\Lambda^x_{q,t} =  \Lambda^x \otimes \bC(q, t)$
the algebra of symmetric functions with coefficients in rational functions in $q$ and $t$.
The Macdonald polynomials $\{P_{\mu} = P_{\mu}(x; q, t)\}$
are suitable normalized symmetric functions in $\Lambda^x_{q,t}$
such that \cite[\S VI.4]{Macdonald}:
\begin{itemize}
\item [(i)] $P_\lambda = m_\lambda+\sum_{\mu < \lambda} u_{\lambda\mu}m_\mu$,
where $m_\mu$ are the monomial symmetric functions,
and the coefficients $u_{\lambda\mu}$
are rational functions of $q$ and $t$;
\item[(ii)] With respect to the scalar product
$(\cdot,\cdot)_{q,t}$ on $\Lambda^x_{q,t}$ defined
by
\be
(p_\lambda, p_\mu)_{q,t} = \delta_{\lambda\mu}z_\lambda
\prod_{i=1}^{l(\lambda)} \frac{1-q^{\lambda_i}}{1-t^{\lambda_i}},
\ee
$\{P_\lambda(x;q,t)\}$ is an orthogonal basis:
\bea
&& (P_\lambda, P_\mu)_{q,t} = \delta_{\lambda, \mu} b_\lambda^{-1}, \\
&& b_\lambda = \prod_{s\in \lambda}
\frac{1-q^{a(s)}t^{l(s)+1}}{1-q^{a(s)+1}t^{l(s)}}.
\eea
\end{itemize}
Define another basis $\{Q_\lambda(x;q,t)\}$ by
\be
Q_\lambda(x;q,t) = b_\lambda P_\lambda(x;q,t),
\ee
then $\{Q_\lambda(x;q,t)\}$ is the orthogonal
dual basis of $\{P_\lambda(x;q,t)\}$
with respect to the scalar product $(\cdot, \cdot)_{q,t}$.

One can also consider the orthogonal dual basis
of $\{P_\lambda(x;q,t)\}$
with respect to the scalar product $(\cdot, \cdot)$ defined by
\be
(p_\lambda, p_\mu) = \delta_{\lambda\mu}z_\lambda.
\ee
By \cite[\S VI.5]{Macdonald},
\be \label{eqn:Orth-1}
( \omega P_{\lambda^t}(x;q,t), P_\mu(x;t,q) ) ={\delta_\lambda\mu},
\ee
where  $\omega: \Lambda \to \Lambda$ is the involution on $\Lambda$ defined by:
\be
\omega(p_\lambda) = (-1)^{|\lambda|-l(\lambda)} p_\lambda.
\ee
Equivalently,
\be \label{eqn:Orth}
\sum_\lambda P_\lambda(x;q,t) P_{\lambda^t}(y; t, q)
= \prod_{i,j} (1+x_iy_i)
= \exp \sum_{n=1}^\infty \frac{(-1)^{n-1}}{n} p_n(x)p_n(y).
\ee
Now we reinterpret this identity by operators on $\Lambda_{q,t}$.
For $n> 0$,
let $\alpha_{-n}: \Lambda^x_{q,t} \to \Lambda^x_{q,t}$ be the operator
defined by multiplication by $p_n$,
and let $\alpha_n: \Lambda^x_{q,t} \to \Lambda^x_{q,t}$ be the operator
$n \frac{\pd}{\pd p_n}$.
For a partition $\mu$,
denote by $|\mu\rangle:= s_\mu(x) \in \Lambda^x_{q,t}$,
and by $|\mu;q,t\rangle: = P_\mu(x;q,t)\in \Lambda^x_{q,t}$.
Now \eqref{eqn:Orth} can be rewritten as
\be \label{eqn:Orth2}
\exp \biggl(\sum_{n=1}^\infty \frac{(-1)^{n-1}}{n} p_n(y)
\alpha_{-n} \biggr) \vac
= \sum_{\lambda} P_{\lambda^t}(y; t,q) \cdot |\lambda; q,t\rangle.
\ee
By interchanging $x$ and $y$ we also have:
\be \label{eqn:Orth3}
\exp \biggl(\sum_{n=1}^\infty \frac{(-1)^{n-1}}{n} p_n(y)
\alpha_{-n}\biggr) \vac
= \sum_{\lambda} P_{\lambda}(y; q,t) \cdot |\lambda^t; t,q\rangle.
\ee

\subsection{Specialization of Macdonald polynomials}
\label{sec:Specialization}

Denote by $\epsilon^y_{u,t}: \Lambda_{q,t}^y \to \bQ(q,t)$
the specialization homomorphism defined by
$$\epsilon^y_{u,t} p_n(y) = \frac{1-u^n}{1-t^n}$$
for each integer $n \geq 1$.
Then we have \cite[(6.16), (6.17)]{Macdonald}:
\begin{eqnarray}
&& \epsilon^y_{u,t} P_{\lambda}(y; q, t)
= \prod_{s \in \lambda} \frac{t^{l'(s)}-q^{a'(s)}u}{1- q^{a(s)}t^{l(s)+1}}. \label{eqn:Mac3}
\end{eqnarray}
Changing $\lambda$ to $\lambda^t$ and $u$ to $v$,
we also have
\begin{eqnarray}
&& \epsilon^y_{v,t} P_{\lambda^t}(y; q, t)
= \prod_{s \in \lambda} \frac{t^{a'(s)}-q^{l'(s)}u}{1- q^{l(s)}t^{a(s)+1}}. \label{eqn:Mac3t}
\end{eqnarray}
We further interchange $q$ and $t$ to get:
\begin{eqnarray}
&& \epsilon^y_{v,q} P_{\lambda^t}(y; t, q)
= \prod_{s \in \lambda} \frac{q^{a'(s)}-t^{l'(s)}v}{1- t^{l(s)}q^{a(s)+1}}. \label{eqn:Mac3t-2}
\end{eqnarray}

Now we apply $\epsilon^y_{u,t}$ on both sides of \eqref{eqn:Orth3} to get:
\be \label{eqn:Orth3-2}
\exp \biggl(\sum_{n=1}^\infty \frac{(-1)^{n-1}}{n} \frac{1-u^n}{1-t^n}
\alpha_{-n} \biggr) \vac
= \sum_{\lambda} \prod_{s \in \lambda}
 \frac{t^{l'(s)}-q^{a'(s)}u}{1- q^{a(s)}t^{l(s)+1}}
 \cdot |\lambda^t; t,q\rangle.
\ee
We now change $q$ to $q^{-1}$ and $t$ to $t^{-1}$ to get:
\be \label{eqn:Orth3-3}
\exp \biggl(\sum_{n=1}^\infty \frac{(-1)^{n-1}}{n} \frac{1-u^n}{1-t^{-n}}
\alpha_{-n} \biggr) \vac
= \sum_{\lambda} \prod_{s \in \lambda}
 \frac{t^{-l'(s)}-q^{-a'(s)}u}{1- q^{-a(s)}t^{-(l(s)+1)}}
 \cdot |\lambda^t; t,q\rangle.
\ee
In the above we have used the following property of $P_\lambda$
(c.f. \cite[p. 324]{Macdonald}):
\be
P_\lambda(x;q^{-1}, t^{-1}) = P_\lambda(x;q,t).
\ee
Similarly,
we apply $\epsilon^y_{v,q}$ on both sides of \eqref{eqn:Orth2} to get:
\be \label{eqn:Orth2-2}
\exp \biggl(\sum_{n=1}^\infty \frac{(-1)^{n-1}}{n}\frac{1-v^n}{1-q^n}
 \alpha_{-n} \biggr) \vac
= \sum_{\lambda}\prod_{s \in \lambda}
\frac{q^{a'(s)}-t^{l'(s)}v}{1- t^{l(s)}q^{a(s)+1}}
\cdot |\lambda; q,t\rangle.
\ee
Now we take $q=t_2$, $t=t_1^{-1}$,
and change $u$ to $(ut^A)^{-1}$, $v$ to $vt^{-A}$:
\be \label{eqn:Orth2-3}
\exp \biggl(\sum_{n=1}^\infty
\frac{(-1)^{n-1}}{n}\frac{1-v^nt^{-nA}}{1-t_2^n}
 \alpha_{-n} \biggr) \vac
= \sum_{\lambda}\prod_{s \in \lambda}
\frac{t_2^{a'(s)}-vt^{-A}t_1^{-l'(s)}}{1- t_1^{-l(s)}t_2^{a(s)+1}}
\cdot |\lambda; t_2,t_1^{-1}\rangle.
\ee
\be \label{eqn:Orth3-4}
\exp \biggl(\sum_{n=1}^\infty \frac{(-1)^{n-1}}{n} \frac{1-(ut^{A})^{-n}}{1-t_1^{n}}
\alpha_{-n} \biggr) \vac
= \sum_{\lambda} \prod_{s \in \lambda}
 \frac{t_1^{l'(s)}-t_2^{-a'(s)}(ut^A)^{-1}}{1- t_2^{-a(s)}t_1^{l(s)+1}}
|\lambda^t; t_1^{-1},t_2\rangle.
\ee
Let $K$ be the operator on $\Lambda$ such that $K|\mu\rangle:= (-ut^AQ)^{|\mu|}|\mu\rangle$.
Extend it naturally to $\Lambda_{q,t}$,
and it is easy to see that
one has $K|\mu;q,t\rangle = (-ut^A)^{|\mu|} |\mu; q, t\rangle$.
Apply operator $K \omega $ on both sides of \eqref{eqn:Orth2-3}
and take scalar product with both sides of \eqref{eqn:Orth3-4}:
\ben
&& \sum_{n \geq 0} Q^n
\chi((\bC^2)^{[n]}, \Lambda_{-u}\xi_n^A \otimes \Lambda_{-v} (\xi_n^A)^* )(t_1, t_2) \\
& = & \sum_{\mu} (-ut^AQ)^{|\mu|}
 \cdot \prod_{s \in \mu}
\frac{(t_2^{a'(s)}- vt^{-A}t_1^{-l'(s)})}{(1- t_1^{-l(s)} t_2^{a(s)+1})}
\frac{(t_1^{l'(s)}-(ut^A)^{-1}t_2^{-a'(s)})}
{(1 - t_2^{-a(s)}t_1^{l(s)+1})} \\
& = & \lvac \exp \biggl(\sum_{n=1}^\infty \frac{(-1)^{n-1}}{n} \frac{1-(ut^{A})^{-n}}{1-t_1^{n}}
\alpha_{n} \biggr) K \omega \exp \biggl(\sum_{n=1}^\infty
\frac{(-1)^{n-1}}{n}\frac{1-v^nt^{-nA}}{1-t_2^n}
 \alpha_{-n} \biggr) \vac \\
& = & \exp \biggl(\sum_{n=1}^\infty \frac{(-1)^{n-1}(-ut^AQ)^{n}}{n} \frac{1-(ut^{A})^{-n}}{1-t_1^{n}}
\cdot \frac{1-v^nt^{-nA}}{1-t_2^n} \biggr) \\
& = &
\exp \biggl(\sum_{n=1}^\infty \frac{Q^n}{n} \frac{1-(ut^{A})^{n}}{1-t_1^{n}}
\cdot \frac{1-(vt^{-A})^n}{1-t_2^n} \biggr).
\een
This concludes our operator reformulation of the observation in Wang-Zhou \cite{Wang-Zhou1}.

\subsection{The Macdonald operators $D_n^r$}

We recall some facts about Macdonald's operators \cite[Chapter VI]{Macdonald}
as summarized in \cite[Appendix]{Wang-Zhou1}.
Denote by $\Lambda_n$ the space of symmetric polynomials in $n$ variables $x_1, \dots, x_n$.
For every function $f$ in $x_1, \dots, x_n$,
$q \in \bC$, $1 \leq i \leq n$,
define the shift operator $T_{q, x_i}$ by
$$(T_{u, x_i} f)(x_1, \dots, x_n)
= f(x_1, \dots, ux_i, \dots, x_n).$$
Define the {\em Macdonald operators $D_n^r$} on $\Lambda_n$ by \cite[p.316]{Macdonald}:
$$D_n^r = \sum_{I=\{1 \leq i_1 < \cdots < i_r \leq n\}}  A_I(x_1, \dots, x_n;t)\prod_{i \in I} T_{q,x_i},$$
where
$$A_I(x_1, \dots x_n; t) = t^{r(r-1)/2} \prod_{i \in I, j \in I^c} \frac{tx_i-x_j}{x_i-x_j}.$$
Here $I^c = \{1, \dots, n\} - I$.
The Macdonald polynomials $\{P_{\mu}(x_1, \dots, x_n; q, t)\}$
are suitable normalized symmetric functions such that
(cf. \cite[Chapter VI, Section 4]{Macdonald}):
\begin{eqnarray} \label{eqn:MacDef}
&& D_n^r P_{\mu}(x_1, \dots, x_n;q,t)
= t^{nr} e_r(q^{\mu_1}t^{-1}, \dots, q^{\mu_n}t^{-n})
\cdot P_{\mu}(x_1,\dots, x_n;q,t).
\end{eqnarray}
It turns out that
\be
P_\lambda(x_1, \dots, x_n;q,t)
= m_\lambda(x_1, \dots, x_n)+\sum_{\mu < \lambda} u_{\lambda\mu}m_\mu(x_1, \dots, x_n),
\ee
where the coefficients $u_{\lambda\mu}$ are independent of $n$,
and since
\be
m_\mu(x_1, \dots, x_{n-1}, 0) = m_\mu(x_1, \dots, x_{n-1})
\ee
for $n > l(\mu)$,
hence one has
\be
P_\lambda(x_1, \dots, x_{n-1}, 0; q, t)
= P_\lambda(x_1, \dots, x_{n-1}; q, t)
\ee
for $n > l(\lambda)$.
Hence one can define $P_\lambda(x_1, \dots, x_n, \dots;q,t)$
as a symmetric function in infinitely many variables $x_1, \dots, x_n, \dots$.

\subsection{The modified Macdonald operator $E$}

The operators $D^r_n$ do not have stability property,
instead,
\be \label{eqn:D-res}
D^r_n|_{x_n =0} = t^rD_{n-1}^r + t^{r-1}D_{n-1}^{r-1},
\ee
where $D_n^0=1$,
so one needs to modify them to take $n \to \infty$.
For $r=1$ this was done by Macdonald \cite[Section VI.4]{Macdonald}.
Define an operator $E$ on $\Lambda$ whose restriction to $\Lambda_n$ is given by:
$$E_n = t^{-n} D_n^1(x) -  \sum_{i=1}^n t^{-i}.$$
It is easy to see that \cite[Lemma 1]{Cheng-Wang}:
\be \label{eqn:Arm-Leg}
\sum_{i=1}^{n} q^{\mu_i}t^{-i} - \sum_{i=1}^n t^{-i}
=\frac{q-1}{t} \sum_{(i,j) \in \mu} t^{-(i-1)}q^{j-1}
= \frac{q-1}{t} \sum_{s \in \mu} t^{-l'(s)}q^{a'(s)}.
\ee
Hence we have
\begin{eqnarray} \label{eqn:E}
&& EP_{\mu}(x; q, t) = \frac{q-1}{t} \sum_{s \in \mu} t^{-l'(s)}q^{a'(s)}
\cdot P_{\mu}(x; q, t).
\end{eqnarray}
It follows that
\begin{eqnarray} \label{eqn:E-2}
&& EP_{\mu}(x; t_2, t_1^{-1}) = t_1(t_2-1)
\sum_{s \in \mu} t_1^{l'(s)}t_2^{a'(s)}
\cdot P_{\mu}(x; t_2, t_1^{-1}).
\end{eqnarray}

\subsection{Vertex operator representation of the
modified Macdonald operator $E$}

The action of the operator $D_n^1$ on Newton polynomials
is given by \cite[(63)]{Gar-Hai}:
\begin{equation} \label{eqn:Dn1}
\begin{split}
& D_n^1p_{\mu}(x_1, \dots, x_n)
= \frac{1}{1-t}p_{\mu}(x_1, \dots, x_n) \\
& + \frac{t^n}{t-1} \left[\prod_{i=1}^n \frac{1-zx_i}{1-zx_it} \prod_{i=1}^{l(\mu)}
\big( p_{\mu_i}(x_1,\dots, x_n)+\frac{q^{\mu_i}-1}{(tz)^{\mu_i}}\big)\right]_0,
\end{split}
\end{equation}
where for a formal power series $f(z) = \sum_{n \in \bZ} b_n z^n$,
$f(z)_0 = b_0$.
This can be seen as follows.
Since
\be
D_n^1 = \sum_{i=1}^n \prod_{j \neq i}
\frac{tx_i-x_j}{x_i-x_j} T_{q,x_i},
\ee
we have
\ben
D_n^1p_\mu(x_1, \dots, x_n)
& = & \sum_{i=1}^n \prod_{j\neq i} \frac{tx_i-x_j}{x_i-x_j}
\prod_{k=1}^{l(\mu)}
(x_1^{\mu_k} + \cdots + x_n^{\mu_k} + (q^{\mu_k} - 1) x_i^{\mu_k}).
\een
The idea is to express each term on the right-hand side as the residue of a function
and apply Cauchy's residue formula.
We take
$$f(z) = \frac{1}{z}\prod_{j=1}^n \frac{1-x_jz}{1-tx_jz}
\prod_{k=1}^{l(\mu)}
\biggl(x_1^{\mu_k} + \cdots + x_n^{\mu_k} + \frac{q^{\mu_k} - 1}{(tz)^{\mu_k}} \biggr).
$$
This function has poles at $z=0$, $z=\frac{1}{tx_i}$, $i=1, \dots, n$.
For $R> 0$ large enough,
one has
\ben
&& \frac{1}{2\pi i} \int_{|z|=R} f(z) dz \\
& = & - \frac{1}{2\pi i} \int_{|w|=\frac{1}{R}}
w \cdot \prod_{j=1}^n \frac{w-x_j}{w-tx_j} \prod_{k=1}^{l(\mu)}
\biggl(x_1^{\mu_k}+\cdots+x_n^{\mu_k}+ (q^{\mu_k}-1)t^{-\mu_k} w^{\mu_k}\biggr)
\frac{-dw}{w^2} \\
& = & t^{-n} \prod_{k=1}^{l(\mu)}
\biggl(x_1^{\mu_k}+\cdots+x_n^{\mu_k}\biggr).
\een
On the other hand,
by Cauchy Residue formula,
\ben
&& \frac{1}{2\pi i} \int_{|z|=R} f(z) dz \\
& = & \biggl[\prod_{j=1}^n \frac{1-x_jz}{1-tx_jz}
\prod_{k=1}^{l(\mu)}
\biggl(x_1^{\mu_k} + \cdots + x_n^{\mu_k} + \frac{q^{\mu_k} - 1}{(tz)^{\mu_k}} \biggr)\biggr]_{z^0} \\
& + & \sum_{i=1}^n \frac{t-1}{t^n} \prod_{j \neq i} \frac{tx_i-x_j}{x_i-x_j}
\cdot \prod_{k=1}^{l(\mu)}
\biggl(x_1^{\mu_k} + \cdots + x_n^{\mu_k} + (q^{\mu_k} - 1)x_i^{\mu_k} \biggr).
\een
This completes the proof of \eqref{eqn:Dn1}.
It follows that the action of $E$ on $p_\mu$ is given by:
\begin{equation} \label{eqn:Epmu}
\begin{split}
& Ep_{\mu}(x_1, \dots, x_n, \dots)  \\
& = \frac{1}{t-1} \left[\prod_{i=1}^\infty \frac{1-zx_i}{1-zx_it}
\cdot \prod_{i=1}^{l(\mu)}
\left( p_{\mu_i}(x_1,\dots, x_n, \dots)+\frac{q^{\mu_i}-1}{(tz)^{\mu_i}}\right)
 \right]_{z^0} \\
& - \frac{1}{t-1} p_\mu(x_1, \dots, x_n, \dots).
\end{split}
\end{equation}
Since we are taking the coefficients of $z^0$,
the answer will not be changed if we change $z$ to $z/t$.
Now we are led to consider the operator on $\Lambda_{q,t}$:
\be
p_{\mu} \mapsto \prod_{i=1}^\infty \frac{1-t^{-1}x_iz}{1-x_iz} \cdot
\prod_{i=1}^{l(\mu)}
\left( p_{\mu_i}+\frac{q^{\mu_i}-1}{z^{\mu_i}}\right).
\ee
The right-hand side can be written in the following way
\be
\exp \biggl( \sum_{n=1}^\infty \frac{1-t^{-n}}{n} p_nz^n\biggr) \cdot
\exp \biggl( -\sum_{n=1}^\infty \frac{1-q^{n}}{n} \cdot n\frac{\pd}{\pd p_n}z^{-n}
\biggr) p_\mu,
\ee
so Macdnald's operator $E$ can be written in term of vertex operator as follows:
\be
E = \frac{1}{t-1}
\biggl[\exp \biggl( \sum_{n=1}^\infty \frac{1-t^{-n}}{n} p_nz^n\biggr) \cdot
\exp \biggl( -\sum_{n=1}^\infty \frac{1-q^{n}}{n} \cdot n\frac{\pd}{\pd p_n}z^{-n}
\biggr) -1\biggr]\biggl|_{z^0}.
\ee
This vertex operator appears in \cite[(32)]{AMOS}.
It also  appears in a different form in the study of Macdonald
polynomials by Garsia and Haiman \cite[(73)]{Gar-Hai},
as remarked by Cheng and Wang \cite[Remark 6]{Cheng-Wang}
who rephrased it in terms of vertex operator.

\subsection{Higher modified Macdonald operators}

From \eqref{eqn:MacDef} it is natural to consider $t^{-rn}D_n^r$,
then \eqref{eqn:D-res} becomes
\be \label{eqn:D-res-2}
t^{-rn}D^r_n|_{x_n =0} = t^{-r(n-1)}D_{n-1}^r + t^{-n} \cdot t^{-(r-1)(n-1)}D_{n-1}^{r-1},
\ee
So one is led to consider a combination of the form
\be
E^r_n:=\sum_{j=0}^{r} c_{j,n}(t) t^{-(r-j)n} D_n^{r-j},
\ee
where $c_{j,n}(t)$ is a function in $t$, parameterized by $j, n$,
and $c_{0,n}(t) = 1$.
If we require that
\be
E_n^r|_{x_n=0} = E^r_{n-1},
\ee
then from
\ben
E^r_n|_{x_n=0}
& = & (t^{-rn} D_n^r + \sum_{j=1}^{r} c_{j,n}(t) t^{-(r-j)n} D_n^{r-j})|_{x_n=0} \\
& = & \sum_{j=0}^{r-1} c_{j,n}(t)  (t^{-(r-j)(n-1)}D_{n-1}^{r-j}
+ t^{-n} \cdot t^{-(r-j-1)(n-1)}D_{n-1}^{r-j-1})
+ c_{r,n}(t) \\
& = & t^{-r(n-1)} D_{n-1}^r + \sum_{j=1}^{r} (c_{j,n}(t) + t^{-n}c_{j-1,n}(t))
t^{-(r-j)(n-1)}D_{n-1}^{r-j} \\
& = & D_{n-1}^r + \sum_{j=0}^{r-1} c_{j,n-1}(t) \cdot t^{-jr}D_{n-1}^j.
\een
One has the following recursion relation:
\be
c_{j,n}(t) = c_{j,n-1}(t) - t^{-n}c_{j-1, n}(t),
\ee
for $j=0, \dots, r-1$.
This can be solved with the initial values $c_{r,n}(t) = 1$
and $c_{j,0}(t) = 0$.
The following are some examples:
\ben
c_{1,n}(t) & = & -( t^{-1} + \cdots + t^{-n})= -t^{-1}e_1(1, t^{-1}, \dots, t^{-(n-1)}) , \\
c_{2,n}(t) & = & t^{-1} \cdot t^{-1} + t^{-2} \cdot (t^{-1} + t^{-2}) + \cdots
+ t^{-n}(t^{-1} + \cdots + t^{-n}) \\
& = & t^{-1} e_2(1, t^{-1}, \dots, t^{-n}), \\
c_{3,n}(t) & = & - t^{-1} \cdot t^{-1} e_2(1,t^{-1})
- t^{-2} \cdot t^{-1} e_2(1, t^{-1}, t^{-2}) \\
&& - \cdots -t^{-n} \cdot t^{-1} e_2(1,t^{-1}, \dots, t^{-n}) \\
& = & -e_3(1, t^{-1}, \dots, t^{-(n+1)}), \\
c_{4,n}(t) & = & t^{-1} \cdot e_3(1,t^{-1}, t^{-2})
+ t^{-2} \cdot e_3(1, t^{-1}, \dots, t^{-3}) + \cdots \\
&& + t^{-n} \cdot e_3(1, t^{-1}, \dots, t^{-(n+1)}) \\
& = & t^2 \cdot e_4(1, t^{-1}, \dots, t^{-(n+2)}), \\
c_{5,n}(t) & = & -t^{-1} \cdot t^2e_4(1,t^{-1},\cdot, t^{-3})
- t^{-2} \cdot t^2e_4(1, t^{-1}, \dots, t^{-4}) + \cdots \\
&& - t^{-n} \cdot t^2 e_4(1, t^{-1}, \dots, t^{-(n+2)}) \\
& = & -t^5 \cdot e_5(1, t^{-1}, \dots, t^{-(n+3)}).
\een
In general
\be
c_{j,n}(t) = (-1)^j t^{(j^2-3j)/2} e_j(1, t^{-1}, \dots, t^{-(n+j-2)}).
\ee
So we have proved the following:

\begin{prop}
Let the operator $E^r_n$ be defined by:
\be
E^r_n:= \sum_{j=0}^{r} (-1)^j t^{(j^2-3j)/2} e_j(1, t^{-1}, \dots, t^{-(n+j-2)})
\cdot  t^{-(r-j)n} D_n^{r-j}.
\ee
Then for $n$ large enough one has
\be
E^r_n|_{x_n=0} = E^r_{n-1}.
\ee
The Macdonald polynomials $P_\mu(x_1, \dots, x_n;q,t)$ are eigenvectors of $E_n^r$
with eigenvalues:
\be
e_n^r(\mu):=\sum_{j=0}^{r} (-1)^j t^{\frac{j^2-3j}{2}} e_j(1, t^{-1}, \dots, t^{-(n+j-2)})
\cdot e_{r-j}(q^{\mu_1}t^{-1}, \dots, q^{\mu_n}t^{-n}),
\ee
and the Macdonald functions $P_\mu(x_1, \dots, x_n, \dots;q,t)$ are eigenvectors of $E^r$
with eigenvalues:
\be
e^r(\mu):=\sum_{j=0}^{r} (-1)^j t^{\frac{j^2-3j}{2}} e_j(1, t^{-1}, \dots, t^{-n}, \dots)
\cdot e_{r-j}(q^{\mu_1}t^{-1}, \dots, q^{\mu_n}t^{-n}, \dots).
\ee
\end{prop}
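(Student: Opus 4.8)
The plan is to derive all three assertions from the recursion $c_{j,n}(t)=c_{j,n-1}(t)-t^{-n}c_{j-1,n}(t)$ together with the normalizations $c_{0,n}(t)=1$ and $c_{j,0}(t)=0$ for $j\geq 1$, all of which were already extracted in the text from the stability requirement $E^r_n|_{x_n=0}=E^r_{n-1}$ and the restriction identity \eqref{eqn:D-res}. The first task is to \emph{prove}, rather than merely infer from the tabulated cases, that the closed form
\[
c_{j,n}(t)=(-1)^j t^{(j^2-3j)/2}\,e_j(1,t^{-1},\dots,t^{-(n+j-2)})
\]
solves this system. The two boundary conditions are immediate: $e_0=1$ forces $c_{0,n}=1$, while $e_j$ evaluated at the $j-1$ variables $1,t^{-1},\dots,t^{-(j-2)}$ vanishes, forcing $c_{j,0}=0$. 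Fixing $j$ and inducting on $n$, the recursion then reduces to the single Pascal-type identity
\[
e_j(1,\dots,t^{-(n+j-2)})=e_j(1,\dots,t^{-(n+j-3)})+t^{-(n+j-2)}\,e_{j-1}(1,\dots,t^{-(n+j-3)})
\]
obtained by adjoining the new variable $t^{-(n+j-2)}$ to the alphabet.

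The only step that needs care is matching the powers of $t$: after dividing through by $(-1)^j t^{(j^2-3j)/2}$, the term $-t^{-n}c_{j-1,n}(t)$ must contribute exactly the weight $t^{-(n+j-2)}$ of the adjoined variable, and the bookkeeping $\frac{(j-1)^2-3(j-1)}{2}-n-\frac{j^2-3j}{2}=-(n+j-2)$ is where an off-by-one in the exponent is easiest to commit, so I would treat this as the crux of the stability claim. Substituting the verified closed form back into the telescoping computation already displayed then gives $E^r_n|_{x_n=0}=E^r_{n-1}$ for $n$ large. As an independent check I would record the cleaner repackaging via generating series: writing $C_n(y)=\sum_k c_{k,n}(t)y^k$, a standard Gaussian-binomial evaluation gives $C_n(y)=\prod_{i=1}^n(1+t^{-i}y)^{-1}$, while \eqref{eqn:D-res} yields $\sum_r(D_n^r y^r)|_{x_n=0}=(1+y)\sum_r D_{n-1}^r (ty)^r$; stability is then transparent from the telescoping relation $(1+t^{-n}y)\,C_n(y)=C_{n-1}(y)$.

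With stability established, the eigenvalue on finitely many variables is immediate: applying $E^r_n$ to $P_\mu(x_1,\dots,x_n;q,t)$ and using \eqref{eqn:MacDef} term by term, the factor $t^{-(r-j)n}$ exactly cancels the factor $t^{n(r-j)}$ produced by $D_n^{r-j}$, leaving the scalar $\sum_{j=0}^r c_{j,n}(t)\,e_{r-j}(q^{\mu_1}t^{-1},\dots,q^{\mu_n}t^{-n})$, which is precisely $e^r_n(\mu)$. For the Macdonald functions in infinitely many variables I would pass to the limit $n\to\infty$: the text records that $P_\mu(x_1,\dots,x_n;q,t)$ stabilizes for $n>l(\mu)$, so $E^r$ is well defined as the inverse limit of the $E^r_n$ and $P_\mu(x_1,\dots,x_n,\dots;q,t)$ is an eigenvector with eigenvalue $\lim_n e^r_n(\mu)$. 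The genuine subtlety, which I expect to be the main obstacle and would phrase most carefully, is the convergence of the two infinite factors: $e_j(1,t^{-1},\dots)$ is a well-defined series in $t^{-1}$ with lowest term $t^{-\binom{j}{2}}$, and $e_{r-j}(q^{\mu_1}t^{-1},\dots)$ converges because $q^{\mu_i}=1$ for $i>l(\mu)$ makes its tail agree with that of $e_{r-j}(t^{-1},t^{-2},\dots)$. One must name the completion in which these elements live and check that the stability of the $E^r_n$ is compatible with it, after which the limiting eigenvalue is identified with $e^r(\mu)$.
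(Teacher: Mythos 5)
Your proposal is correct, and it follows the same overall route as the paper: both arguments reduce stability to the recursion $c_{j,n}(t)=c_{j,n-1}(t)-t^{-n}c_{j-1,n}(t)$, $c_{0,n}=1$, $c_{j,0}=0$, extracted from the restriction identity \eqref{eqn:D-res}, and both then feed the closed form for $c_{j,n}$ into \eqref{eqn:MacDef} to read off the eigenvalues. The difference is in how the closed form is justified: the paper computes $c_{1,n},\dots,c_{5,n}$ explicitly and then asserts the general formula from the visible pattern, whereas you actually prove it --- uniqueness of the solution of the recursion with the given boundary data, the vanishing $e_j(1,t^{-1},\dots,t^{-(j-2)})=0$ (an alphabet of only $j-1$ letters), and the Pascal identity $e_j(X\cup\{x\})=e_j(X)+x\,e_{j-1}(X)$ with $x=t^{-(n+j-2)}$; your exponent bookkeeping $\frac{(j-1)^2-3(j-1)}{2}-n-\frac{j^2-3j}{2}=-(n+j-2)$ checks out, and you correctly identify it as the only delicate point. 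Your generating-series repackaging is a genuine improvement rather than a mere check: since $c_{j,n}=(-1)^jt^{-j}\binom{n+j-1}{j}_{t^{-1}}=(-1)^jh_j(t^{-1},\dots,t^{-n})$ by \eqref{eqn:Gauss1}--\eqref{eqn:Gauss2}, one has $C_n(y)=\prod_{i=1}^n(1+t^{-i}y)^{-1}$, and writing $\sum_r E^r_ny^r=C_n(y)\sum_s D^s_n(t^{-n}y)^s$ makes stability a one-line consequence of $(1+t^{-n}y)C_n(y)=C_{n-1}(y)$ together with your restatement of \eqref{eqn:D-res} as $\sum_r (D_n^ry^r)|_{x_n=0}=(1+y)\sum_r D_{n-1}^r(ty)^r$ --- no solving of the recursion is needed at all. (Incidentally, your product form pins down the correct $q$-binomial: the paper's later display $(-1)^jt^{-j}\binom{n+j-2}{j}_{t^{-1}}$ has an off-by-one, as the case $j=1$, $c_{1,n}=-(t^{-1}+\cdots+t^{-n})$, already shows; it should be $\binom{n+j-1}{j}_{t^{-1}}$, consistently with the denominator $\prod_{j=1}^n(1+t^{-j}z)$ in the generating series of $e^r_n(\mu)$, though the limit \eqref{eqn:Eigen-E} is unaffected.) Your treatment of the $n\to\infty$ limit --- stabilization of $P_\mu$ for $n>l(\mu)$, convergence of $e_j(1,t^{-1},\dots)$ as a series in $t^{-1}$ with lowest term $t^{-\binom{j}{2}}$, and of $e_{r-j}(q^{\mu_1}t^{-1},\dots)$ because $q^{\mu_i}=1$ for $i>l(\mu)$ --- is more careful than the paper, which passes to infinitely many variables silently; naming the completion, as you propose, is exactly the right way to make that step rigorous.
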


We now understand the eigenvalues of $E_n^r$ and $E_n$ by
some basic results in basic hypergeometric series.
Recall Gauss $q$-binomial formula:
\bea
&& \prod_{j=0}^{n-1} (1+q^j z)
= \sum_{j=0}^n q^{j(j-1)/2} \binom{n}{j}_q z^j, \label{eqn:Gauss1} \\
&& \prod_{j=0}^{n-1} \frac{1}{1-q^jz} =
\sum_{j=0}^\infty \binom{n+j-1}{j}_q z^j,  \label{eqn:Gauss2}
\eea
where the coefficients on the right-hand side
are defined by:
\be
\binom{n}{j}_q
= \frac{(1-q^n)(1-q^{n-1}) \cdots (1-q^{n-j+1})}{(1-q) \cdots (1-q^j)},
\ee
one gets:
\be
e_j(1, q, \dots, q^{n-1})
= q^{j(j-1)/2} \cdot  \frac{(1-q^n)(1-q^{n-1}) \cdots (1-q^{n-j+1})}{(1-q) \cdots (1-q^j)}.
\ee
It follows that
\ben
&& (-1)^j t^{(j^2-3j)/2} e_j(1, t^{-1}, \dots, t^{-(n+j-2)}) \\
& = & (-1)^j t^{-j}
\cdot \frac{(1-t^{-(n+j-2)})\cdots (1-t^{-(n-1)})}{(1-t^{-1}) \cdots (1-t^{-j})} \\
& = & (-1)^j t^{-j} \binom{n+j-2}{j}_{t^{-1}}.
\een
It follows that
\ben
&& \sum_{r=0}^\infty e^r_n(\mu) z^r
= \sum_{r=0}^\infty z^r \sum_{j=0}^r (-1)^j t^{-j} \binom{n+j-2}{j}_{t^{-1}}
\cdot  e_{r-j}(q^{\mu_1}t^{-1}, \dots, q^{\mu_n}t^{-n}) \\
& = & \sum_{j=0}^\infty (-1)^j t^{-j} \binom{n+j-2}{j}_{t^{-1}} z^j
\cdot  \sum_{k=0}^n e_{k}(q^{\mu_1}t^{-1}, \dots, q^{\mu_n}t^{-n}) z^k \\
& = & \frac{\prod_{j=1}^n (1+q^{\mu_j}t^{-j}z)}{\prod_{j=1}^{n-1}(1+t^{-j}z)}.
\een
Taking $n \to \infty$:
\be \label{eqn:Eigen-E}
 \sum_{r=0}^\infty e^r(\mu) z^r
 = \prod_{j=1}^\infty  \frac{1+q^{\mu_j}t^{-j}z}{1+t^{-j}z}.
\ee

One can also require that the operators $E_n^r$ has the Macdonald polynomials
$P_\mu(x;q,t)$ as eigenvectors,
with eigenvalues $e_r(q^{\mu_1}t^{-1}, \dots, q^{\mu_n}t^{-n}, \dots)$.
I.e.,
\ben
\sum_{j=0}^r c_{j,n}(t) \cdot e_{r-j}(q^{\mu_1}t^{-1}, \dots, q^{\mu_n}t^{-n})
= e_r(q^{\mu_1}t^{-1}, \dots, q^{\mu_n}t^{-n}, \dots).
\een
This is very easy to achieve,
it suffices to take $c_{j,n}(t) = e_j(t^{-n-1}, t^{-n-2}, \dots)$.
In fact,
\ben
&& \sum_{r=0}^\infty z^r \sum_{j=0}^r e_j(t^{-n-1}, t^{-n-2}, \dots)
\cdot e_{r-j}(q^{\mu_1}t^{-1}, \dots, q^{\mu_n}t^{-n}) \\
& = & \sum_{j=0}^\infty z^j e_j(t^{-n-1}, t^{-n-2}, \dots)
\cdot \sum_{k=0}^\infty e_k(q^{\mu_1}t^{-1}, \dots, q^{\mu_n}t^{-n}) \\
& = & \prod_{j=n+1}^\infty (1+zt^{-j}) \cdot \prod_{k=1}^n (1+zq^{\mu_k}t^{-k}) \\
& = & \prod_{j=1}^\infty (1+zq^{\mu_j}t^{-j}) \\
& = & \sum_{r=0}^\infty z^r e_r(q^{\mu_1}t^{-1}, \dots, q^{\mu_n}t^{-n}, \dots).
\een
So we have recovered \cite[(C.5),(C.7)]{Awata-Kanno}:

\begin{prop}
Let the operator $\tilde{E}^r_n$ be defined by:
\be
\tilde{E}^r_n:= \sum_{j=0}^{r} e_j(t^{-n-1}, t^{-n-2}, \dots)
\cdot  t^{-(r-j)n} D_n^{r-j}.
\ee
Then for $n$ large enough,
the Macdonald polynomials $P_\mu(x;q,t)$ are eigenvectors of these operators,
with eigenvalues
$e_r(q^{\mu_1}t^{-1}, \dots, q^{\mu_n}t^{-n}, \dots)$.
\end{prop}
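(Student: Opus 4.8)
The plan is to apply $\tilde{E}^r_n$ directly to $P_\mu(x_1, \dots, x_n; q, t)$ and read off the eigenvalue term by term. By the defining property \eqref{eqn:MacDef} of the Macdonald operators, each summand acts on $P_\mu$ as
\be
e_j(t^{-n-1}, t^{-n-2}, \dots) \cdot t^{-(r-j)n} D_n^{r-j} P_\mu
= e_j(t^{-n-1}, t^{-n-2}, \dots) \cdot e_{r-j}(q^{\mu_1}t^{-1}, \dots, q^{\mu_n}t^{-n}) \cdot P_\mu,
\ee
since the prefactor $t^{-(r-j)n}$ exactly cancels the factor $t^{n(r-j)}$ appearing in the eigenvalue of $D_n^{r-j}$. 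Summing over $j$ shows at once that $P_\mu$ is an eigenvector of $\tilde{E}^r_n$, with eigenvalue the convolution $\sum_{j=0}^r e_j(t^{-n-1}, t^{-n-2}, \dots) e_{r-j}(q^{\mu_1}t^{-1}, \dots, q^{\mu_n}t^{-n})$. It then remains only to identify this sum with $e_r(q^{\mu_1}t^{-1}, \dots, q^{\mu_n}t^{-n}, \dots)$.

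For this I would pass to the generating series in a formal variable $z$ and use the multiplicativity of elementary symmetric functions, $\sum_{k\geq 0} e_k(X) z^k = \prod_{x \in X} (1+zx)$. Applied to the two disjoint variable sets $\{t^{-j}\}_{j>n}$ and $\{q^{\mu_k}t^{-k}\}_{1 \leq k \leq n}$, this turns the convolution into the product
\be
\prod_{j > n} (1+zt^{-j}) \cdot \prod_{k=1}^n (1+zq^{\mu_k}t^{-k}),
\ee
which is the generating series for the elementary symmetric functions in the union of these two sets. The same identity incidentally shows that the coefficients $e_j(t^{-n-1}, t^{-n-2}, \dots)$ defining $\tilde{E}^r_n$ are well-defined as power series in $t^{-1}$, being the $z$-expansion of the convergent product $\prod_{k>n}(1+zt^{-k})$.

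The one point that requires the hypothesis ``$n$ large enough'' is the final gluing: taking $n \geq l(\mu)$, one has $\mu_j = 0$ and hence $q^{\mu_j}t^{-j} = t^{-j}$ for every $j > n$, so the tail factors $\prod_{j>n}(1+zt^{-j})$ are precisely $\prod_{j>n}(1+zq^{\mu_j}t^{-j})$. The union of the two variable sets is therefore the full collection $\{q^{\mu_j}t^{-j}\}_{j \geq 1}$, and the product above equals $\prod_{j\geq 1}(1+zq^{\mu_j}t^{-j}) = \sum_{r\geq 0} e_r(q^{\mu_1}t^{-1}, \dots, q^{\mu_n}t^{-n}, \dots) z^r$. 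Extracting the coefficient of $z^r$ then yields the claimed eigenvalue. There is no genuine analytic difficulty here; the only subtlety, and hence the step I would state carefully, is this identification of the infinite product, where both the interpretation of the tail as the missing variables $q^{\mu_j}t^{-j}$ and the reconstitution of the full variable set rest on the assumption $n \geq l(\mu)$.
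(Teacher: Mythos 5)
Your proof is correct and follows essentially the same route as the paper: both act with \eqref{eqn:MacDef} termwise and then verify the convolution identity $\sum_{j=0}^r e_j(t^{-n-1}, t^{-n-2}, \dots)\, e_{r-j}(q^{\mu_1}t^{-1}, \dots, q^{\mu_n}t^{-n}) = e_r(q^{\mu_1}t^{-1}, \dots, q^{\mu_n}t^{-n}, \dots)$ by multiplying generating series, $\prod_{j>n}(1+zt^{-j}) \cdot \prod_{k=1}^n (1+zq^{\mu_k}t^{-k}) = \prod_{j\geq 1}(1+zq^{\mu_j}t^{-j})$. Your explicit observation that the gluing of the tail rests on $n \geq l(\mu)$, so that $q^{\mu_j}t^{-j} = t^{-j}$ for $j > n$, makes precise the ``$n$ large enough'' hypothesis that the paper leaves implicit.
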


\subsection{Vertex operator realization of $\tilde{E}_n^r$}

In this subsection we will recall the vertex operator realization of $\tilde{E}_n^r$
first derived by Shiraishi \cite{Shiraishi}
and reformulated by Awata and Kanno \cite{Awata-Kanno}.
We will make a new derivation by combining some of their ideas.
Our derivation is completely elementary.

The generating series of the eigenvalues of $\tilde{E}_n^r$ is
\be
\sum_{r=0}^\infty z^r e_r(q^{\mu_1}t^{-1}, \dots, q^{\mu_n}t^{-n}, \dots)
= \prod_{j=1}^\infty (1+q^{\mu_j}t^{-j}z),
\ee
hence by comparing with \eqref{eqn:Eigen-E},
we have
\be
\sum_{r=0}^\infty z^r \tilde{E}^r
= \prod_{j=1}^\infty (1+t^{-j}z^j) \cdot \sum_{s=0}^\infty z^s E^s,
\ee
and
so
\be
\tilde{E}^r = \sum_{j=0}^r e_j(t^{-1}, t^{-2}, \dots) \cdot E^{r-j}.
\ee
By Euler's formula (see e.g. \cite{And}):
\be \label{eqn:Euler1}
\prod_{n=1}^\infty (1+q^nz) = 1+\sum_{n=1}^\infty \frac{q^{n(n+1)/2}}{\prod_{j=1}^n (1-q^j)}z^n,
\ee
one gets
\be
e_n(q,q^2, \dots) = \frac{q^{n(n+1)/2}}{\prod_{j=1}^n (1-q^j)},
\ee
so one has
\be
e_j(t^{-1}, t^{-2}, \dots) = \frac{t^{-j(j+1)/2}}{\prod_{j=1}^n (1-t^{-j})}
= \prod_{j=1}^n \frac{1}{t^j-1},
\ee
and so
\be
\tilde{E}^r = \sum_{j=0}^r\prod_{j=1}^n \frac{1}{t^j-1} \cdot E^{r-j}.
\ee
One also has
\ben
e_j(t^{-n-1}, t^{-n-2}, \dots)
&= & t^{-jn} e_j(t^{-1}, t^{-2}, \dots)
= t^{-jn} \cdot \frac{t^{-j(j+1)/2}}{\prod_{a=1}^j (1-t^{-a})} \\
& = & \frac{t^{-jn}}{\prod_{a=1}^j (t^a-1)}.
\een
It follows that
\ben
\tilde{E}^r_n & = & \frac{t^{-rn}}{\prod_{a=1}^r (t^a-1)} \cdot
\sum_{k=0}^r \prod_{i=0}^{k-1} (t^{r-i}-1) \cdot D^k_n \\
& = & t^{-rn} e_r(t^{-1}, t^{-2}, \dots) \cdot
\sum_{k=0}^r \prod_{i=0}^{k-1} (t^{r-i}-1) \cdot D^k_n.
\een
(See \cite[(78)]{Shiraishi} and  \cite[(D.9)]{Awata-Kanno}.)
From this one can now derive the vertex operator realization of $\tilde{E}^r$
in \cite{Shiraishi, Awata-Kanno}.
The idea is to consider the action of $t^{-rn}\cdot \sum_{k=0}^r
\prod_{i=0}^{k-1} (t^{r-i}-1) \cdot D^k_n$ on $p_\mu$
and rewrite it as the constant terms of some Laurent series.
From the definition of $D_n^r$,
we have
\be
\begin{split}
& t^{-rn} \sum_{k=0}^r \prod_{i=0}^{k-1} (t^{r-i}-1) \cdot D_n^k p_\mu(x_1, \dots, x_n) \\
 = & t^{-rn} \sum_{k=0}^r \prod_{i=0}^{k-1} (t^{r-i}-1) \cdot
\sum_{I=\{1 \leq i_1 < \cdots < i_k \leq n\}}
t^{k(k-1)/2} \prod_{i \in I, j \in I^c} \frac{tx_i-x_j}{x_i-x_j} \\
&  \cdot
\prod_{a=1}^{l(\mu)}
(x_1^{\mu_a}+ \cdots + x_n^{\mu_a}+ (q^{\mu_a}-1)
\sum_{j=1}^k x_{i_j}^{\mu_a}).
\end{split}
\ee
Consider the constant term of
\ben
f_r(z_1, \dots, z_r)
& = &
\prod_{\alpha=1}^r \prod_{j=1}^n
\frac{t - x_jz_\alpha}{1 - x_jz_\alpha}  \cdot
\prod_{1 \leq \alpha<\beta \leq r} \frac{1 - z_\alpha/z_\beta}{1 - z_\alpha/(z_\beta t)} \\
&& \cdot \prod_{a=1}^{l(\mu)} (\sum_{j=1}^n x_j^{\mu_a}
+ (q^{\mu_a}-1) \sum_{\alpha=1}^r z_{\alpha}^{-\mu_a} ).
\een
where $\prod_{1 \leq \alpha<\beta \leq r} \frac{1 - z_\alpha/z_\beta}{1 - z_\alpha/(z_\beta t)} $
is understood as the series
\be
\prod_{1\leq \alpha < \beta \leq r}
(1 + \sum_{n=1}^\infty t^{-n} (1-t) (z_\alpha/z_\beta)^n).
\ee
The constant term can be computed recursively as follows.
First,
\ben
&& f_r(z_1, \dots, z_r)|_{z_r^0} \\
& = & \Res_{z_r=\infty} (f_r(z_1, \dots, z_r)\frac{dz_r}{z_r} )
- \sum_{i=1}^n \Res_{z_r=1/x_i} (f_r(z_1, \dots, z_r)\frac{dz_r}{z_r} ).
\een
Since one has
\ben
f_r(z_1, \dots, z_r)
& = & \frac{1}{z_r}\prod_{j=1}^n
\frac{x_j- t/z_r}{x_j-1/z_r} \cdot
\prod_{1\leq \alpha < r} \frac{1 - z_\alpha/z_r}{1 - z_\alpha/(z_r t)} \\
&& \cdot\prod_{\alpha=1}^{r-1} \biggl(\frac{1}{z_\alpha}\prod_{j=1}^n
\frac{t - x_jz_\alpha}{1 - x_jz_\alpha} \biggr) \cdot
\prod_{\alpha<\beta<r} \frac{1 - z_\alpha/z_\beta}{1 - z_\alpha/(z_\beta t)} \\
&& \cdot \prod_{a=1}^{l(\mu)} (\sum_{j=1}^n x_j^{\mu_a}
+ (q^{\mu_a}-1) \sum_{\alpha=1}^{r-1} z_{\alpha}^{-\mu_a}
+ (q^{\mu_a}-1) z_r^{-\mu_a}),
\een
and so
\ben
\Res_{z_r=\infty} f(z_1, \dots, z_r) \frac{dz_r}{z_r}
& = & \prod_{\alpha=1}^{r-1} \biggl(\frac{1}{z_\alpha}\prod_{j=1}^n
\frac{t - x_jz_\alpha}{1 - x_jz_\alpha} \biggr) \cdot
\prod_{\alpha<\beta<r} \frac{1 - z_\alpha/z_\beta}{1 - z_\alpha/(z_\beta t)} \\
&& \cdot \prod_{a=1}^{l(\mu)} (\sum_{j=1}^n x_j^{\mu_a}
+ (q^{\mu_a}-1) \sum_{\alpha=1}^{r-1} z_{\alpha}^{-\mu_a}) \\
& = & f_{r-1}(z_1, \dots, z_{r-1}).
\een
We also have
\ben
&& \sum_{i=1}^n \Res_{z_r =1/x_i} f(z_1, \dots, z_r) \frac{dz_r}{z_r} \\
& = & (1-t) \sum_{i=1}^n
\prod_{j \neq i} \frac{tx_i-x_j}{x_i-x_j} \cdot
\prod_{\alpha=1}^{r-1} \frac{1-x_i z_\alpha}{1-x_iz_\alpha/t}
\cdot
\prod_{\alpha=1}^{r-1}  \prod_{j=1}^n
\frac{t - x_jz_\alpha}{1 - x_jz_\alpha}   \cdot
\prod_{\alpha<\beta<r} \frac{1 - z_\alpha/z_\beta}{1 - z_\alpha/(z_\beta t)} \\
&& \cdot \prod_{a=1}^{l(\mu)} (\sum_{j=1}^n x_j^{\mu_a}
+ (q^{\mu_a}-1) \sum_{\alpha=1}^{r-1} z_{\alpha}^{-\mu_a} + (q^{\mu_a}-1)x_i^{\mu_a}) \\
& = & t^{r-1}(1-t) \sum_{i=1}^n
\prod_{j \neq i} \frac{tx_i-x_j}{x_i-x_j} \cdot
\prod_{\alpha=1}^{r-1}  \prod_{j\neq i}
\frac{t - x_jz_\alpha}{1 - x_jz_\alpha}   \cdot
\prod_{\alpha<\beta<r} \frac{1 - z_\alpha/z_\beta}{1 - z_\alpha/(z_\beta t)} \\
&& \cdot \prod_{a=1}^{l(\mu)} (\sum_{j=1}^n x_j^{\mu_a}
+ (q^{\mu_a}-1) \sum_{\alpha=1}^{r-1} z_{\alpha}^{-\mu_a} + (q^{\mu_a}-1)x_i^{\mu_a}).
\een
Therefore,
one has the following recursion relation:
\ben
&& f_r(z_1, \dots, z_r)|_{z_r^0}
= f_{r-1}(z_1, \dots, z_{r-1}) \\
& + & t^{r-1}(t-1) \sum_{i=1}^n
\prod_{j \neq i} \frac{tx_i-x_j}{x_i-x_j} \cdot
\prod_{\alpha=1}^{r-1}  \prod_{j\neq i}
\frac{t - x_jz_\alpha}{1 - x_jz_\alpha}   \cdot
\prod_{1\leq \alpha<\beta\leq r-1} \frac{1 - z_\alpha/z_\beta}{1 - z_\alpha/(z_\beta t)} \\
&& \cdot \prod_{a=1}^{l(\mu)} (\sum_{j=1}^n x_j^{\mu_a}
+ (q^{\mu_a}-1) \sum_{\alpha=1}^{r-1} z_{\alpha}^{-\mu_a} + (q^{\mu_a}-1)x_i^{\mu_a}).
\een
Repeating the above computations for $z_{r-1}, \cdots, z_1$,
one gets:
\ben
&& f_r(z_1, \dots, z_r)|_{z_r^0}|_{z_{r-1}^0}
= f_{r-2}(z_1, \dots, z_{r-2}) \\
& + & t^{r-2}(t-1) \sum_{i=1}^n
\prod_{j \neq i} \frac{tx_i-x_j}{x_i-x_j} \cdot
\prod_{\alpha=1}^{r-2}  \prod_{j\neq i}
\frac{t - x_jz_\alpha}{1 - x_jz_\alpha}   \cdot
\prod_{1\leq \alpha<\beta\leq r-2} \frac{1 - z_\alpha/z_\beta}{1 - z_\alpha/(z_\beta t)} \\
&& \cdot \prod_{a=1}^{l(\mu)} (\sum_{j=1}^n x_j^{\mu_a}
+ (q^{\mu_a}-1) \sum_{\alpha=1}^{r-2} z_{\alpha}^{-\mu_a} + (q^{\mu_a}-1)x_i^{\mu_a}) \\
& + & t^{r-1}(t-1) \cdot \biggl[  \sum_{i=1}^n
\prod_{j \neq i} \frac{tx_i-x_j}{x_i-x_j} \cdot
\prod_{\alpha=1}^{r-2}  \prod_{j\neq i}\frac{t - x_jz_\alpha}{1 - x_jz_\alpha}
\cdot
\prod_{1\leq \alpha<\beta\leq r-2} \frac{1 - z_\alpha/z_\beta}{1 - z_\alpha/(z_\beta t)} \\
&&  \cdot
\prod_{a=1}^{l(\mu)} (\sum_{j=1}^n x_j^{\mu_a}
+ (q^{\mu_a}-1) \sum_{\alpha=1}^{r-2} z_{\alpha}^{-\mu_a} + (q^{\mu_a}-1)x_i^{\mu_a}) \\
& + & \sum_{i=1}^n
\prod_{j \neq i} \frac{tx_i-x_j}{x_i-x_j} \cdot \sum_{k\neq i} (t-1)
\prod_{j\neq i,k}  \frac{t-x_j/x_k}{1-x_j/x_k}
\prod_{\alpha=1}^{r-2} \prod_{j\neq i}  \frac{t - x_jz_\alpha}{1 - x_jz_\alpha}   \\
&& \cdot \prod_{1\leq \alpha\leq r-2} \frac{1 - x_k z_\alpha}{1 - x_k z_\alpha/t}
\cdot \prod_{1\leq \alpha<\beta\leq r-2}  \frac{1 - z_\alpha/z_\beta}{1 - z_\alpha/(z_\beta t)} \\
&& \cdot \prod_{a=1}^{l(\mu)} (\sum_{j=1}^n x_j^{\mu_a}
+ (q^{\mu_a}-1) \sum_{\alpha=1}^{r-2} z_{\alpha}^{-\mu_a}
+ (q^{\mu_a}-1)(x_k^{\mu_a}+x_i^{\mu_a})) \biggr].
\een
After some simplifications one gets:
\ben
&& f_r(z_1, \dots, z_r)|_{z_r^0}|_{z_{r-1}^0}
= f_{r-2}(z_1, \dots, z_{r-2}) \\
& + & (t^{r-1}+t^{r-2})(t-1) \sum_{i=1}^n
\prod_{j \neq i} \frac{tx_i-x_j}{x_i-x_j} \cdot
\prod_{\alpha=1}^{r-2}  \prod_{j\neq i}
\frac{t - x_jz_\alpha}{1 - x_jz_\alpha}   \cdot
\prod_{1\leq \alpha<\beta\leq r-2} \frac{1 - z_\alpha/z_\beta}{1 - z_\alpha/(z_\beta t)} \\
&& \cdot \prod_{a=1}^{l(\mu)} (\sum_{j=1}^n x_j^{\mu_a}
+ (q^{\mu_a}-1) \sum_{\alpha=1}^{r-2} z_{\alpha}^{-\mu_a} + (q^{\mu_a}-1)x_i^{\mu_a}) \\
& + & t^{r-1}t^{r-2}(t-1)^2 \cdot
\sum_{1\leq i \neq k \leq n} \frac{tx_i-x_k}{x_i-x_k} \cdot
\prod_{j \neq i, k} \frac{tx_i-x_j}{x_i-x_j} \frac{tx_k-x_j}{x_k-x_j} \\
&& \cdot
\prod_{\alpha=1}^{r-2} \prod_{j\neq i,k}  \frac{t - x_jz_\alpha}{1 - x_jz_\alpha}
\cdot
\prod_{1\leq \alpha<\beta\leq r-2}  \frac{1 - z_\alpha/z_\beta}{1 - z_\alpha/(z_\beta t)} \\
&& \cdot \prod_{a=1}^{l(\mu)} (\sum_{j=1}^n x_j^{\mu_a}
+ (q^{\mu_a}-1) \sum_{\alpha=1}^{r-2} z_{\alpha}^{-\mu_a}
+ (q^{\mu_a}-1)(x_k^{\mu_a}+x_i^{\mu_a})).
\een
For the summation $\sum_{1\leq i \neq k\leq n}$,
one considers two cases: When $i < k$,
set $i_1 = i$ and $i_2 = k$;
when $i> k$, set $i_1 = k$ and $i_2=i$.
Note
\be
\frac{tx_{i_1} - x_{i_2}}{x_{i_1} - x_{i_2}}
+ \frac{tx_{i_2} - x_{i_1}}{x_{i_2} - x_{i_1}}
= t+1.
\ee
So we get:
\ben
&& f_r(z_1, \dots, z_r)|_{z_r^0}|_{z_{r-1}^0}
= f_{r-2}(z_1, \dots, z_{r-2}) \\
& + & (t^{r-1}+t^{r-2})(t-1) \sum_{i=1}^n
\prod_{j \neq i} \frac{tx_i-x_j}{x_i-x_j} \cdot
\prod_{\alpha=1}^{r-2}  \prod_{j\neq i}
\frac{t - x_jz_\alpha}{1 - x_jz_\alpha}   \cdot
\prod_{1\leq \alpha<\beta\leq r-2} \frac{1 - z_\alpha/z_\beta}{1 - z_\alpha/(z_\beta t)} \\
&& \cdot \prod_{a=1}^{l(\mu)} (\sum_{j=1}^n x_j^{\mu_a}
+ (q^{\mu_a}-1) \sum_{\alpha=1}^{r-2} z_{\alpha}^{-\mu_a} + (q^{\mu_a}-1)x_i^{\mu_a}) \\
& + & t^{r-1}t^{r-2}(t-1)^2(t+1) \cdot
\sum_{1\leq i_1 < i_2  \leq n}
\prod_{j \neq i_1, i_2} \frac{tx_{i_1}-x_j}{x_{i_1}-x_j} \frac{tx_{i_2}-x_j}{x_{i_2}-x_j} \\
&& \cdot
\prod_{\alpha=1}^{r-2} \prod_{j\neq i_1,i_2}  \frac{t - x_jz_\alpha}{1 - x_jz_\alpha}
\cdot
\prod_{1\leq \alpha<\beta\leq r-2}  \frac{1 - z_\alpha/z_\beta}{1 - z_\alpha/(z_\beta t)} \\
&& \cdot \prod_{a=1}^{l(\mu)} (\sum_{j=1}^n x_j^{\mu_a}
+ (q^{\mu_a}-1) \sum_{\alpha=1}^{r-2} z_{\alpha}^{-\mu_a}
+ (q^{\mu_a}-1)(x_{i_1}^{\mu_a}+x_{i_2}^{\mu_a})).
\een
One can continue this procedure by induction.
For this purpose,
let us introduce some notations.
For a subset $I \subset \{1, 2, \dots, n\}$,
denote by $|I|$ the number of elements in $I$
and by $I^c = \{1, \dots, n\} - I$.
Define:
\ben
f_{r, I}(z_1, \dots, z_r):
& = & \prod_{j \in I^c} \biggl(\prod_{i\in I} \frac{tx_i-x_j}{x_i-x_j} \cdot
\prod_{\alpha=1}^{r} \frac{t-x_jz_\alpha}{1-x_jz_\alpha} \biggr)
\cdot \prod_{1 \leq \alpha < \beta \leq r}
\frac{1-z_\alpha/z_\beta}{1-z_\alpha/(z_\beta t)} \\
&& \cdot \prod_{a=1}^{l(\mu)}
(\sum_{j=1}^n x_j^{\mu_a}
+ (q^{\mu_a}-1) \sum_{\alpha=1}^{r} z_\alpha^{-\mu_a}
+ \sum_{i\in I} (q^{\mu_a} -1) x_i^{\mu_a}).
\een
When $I$ is the empty set $\emptyset$,
write $f_{r, \emptyset}(z_1, \dots, z_r) = f_r(z_1, \dots, z_r)$.
We also have
\ben
&& \Res_{z_r = \infty} f_{r, I}(z_1, \dots, z_r) \frac{dz_r}{z_r} \\
& = & \prod_{j \in I^c} \biggl(\prod_{i\in I} \frac{tx_i-x_j}{x_i-x_j} \cdot
\prod_{\alpha=1}^{r-1} \frac{t-x_jz_\alpha}{1-x_jz_\alpha} \biggr)
\cdot \prod_{1 \leq \alpha < \beta \leq r-1}
\frac{1-z_\alpha/z_\beta}{1-z_\alpha/(z_\beta t)} \\
&& \cdot \prod_{a=1}^{l(\mu)}
(\sum_{j=1}^n x_j^{\mu_a}
+ (q^{\mu_a}-1) \sum_{\alpha=1}^{r-1} z_\alpha^{-\mu_a}
+ \sum_{i\in I} (q^{\mu_a} -1) x_i^{\mu_a}), \\
& = & f_{r-1, I}(z_1, \dots, z_{r-1}).
\een
For $k \in I^c$,
\ben
&& \Res_{z_r=1/x_k} f_{r, I}(z_1, \dots, z_r) \frac{dz_r}{z_r} \\
& = & -(t-1) \prod_{j\in (I\cup\{k\})^c} \frac{t-x_j/x_k}{1-x_j/x_k} \cdot
\prod_{\alpha=1}^{r-1} \frac{1-x_kz_\alpha}{1-x_kz_\alpha/t} \\
&& \cdot \prod_{j \in I^c} \biggl(\prod_{i\in I} \frac{tx_i-x_j}{x_i-x_j} \cdot
\prod_{\alpha=1}^{r-1} \frac{t-x_jz_\alpha}{1-x_jz_\alpha} \biggr)
\cdot \prod_{1 \leq \alpha < \beta \leq r-1}
\frac{1-z_\alpha/z_\beta}{1-z_\alpha/(z_\beta t)} \\
&& \cdot \prod_{a=1}^{l(\mu)}
(\sum_{j=1}^n x_j^{\mu_a}
+ (q^{\mu_a}-1) \sum_{\alpha=1}^{r-1} z_\alpha^{-\mu_a}
+ \sum_{i\in I \cup\{k\}} (q^{\mu_a} -1) x_i^{\mu_a}) \\
& = & - t^{r-1}(t-1) \prod_{i\in I} \frac{tx_i-x_k}{x_i-x_k} \cdot
f_{r-1, I\cup\{k\}}(z_1, \dots, z_{r-1}).
\een
The constant term of $f_{r, I}(z_1, \dots, z_r)$ in $z_r$:
\ben
&& f_{r, I}(z_1, \dots, z_r)|_{z_r^0} \\
& = & \Res_{z_r=\infty} (f_{r,I}(z_1, \dots, z_r)\frac{dz_r}{z_r} )
- \sum_{k\in I^c} \Res_{z_r=1/x_i} (f_r(z_1, \dots, z_r)\frac{dz_r}{z_r} ) \\
& = & f_{r-1, I}(z_1, \dots, z_{r-1}) \\
& + & t^{r-1}(t-1) \sum_{k \in I^c}
\prod_{i\in I} \frac{tx_i-x_k}{tx_i-x_k} \cdot
f_{r-1, I\cup\{k\}}(z_1, \dots, z_{r-1}).
\een
In these notations,
we have proved above the following identities:
\ben
&& f_r(z_1, \dots, z_r)|_{z_r^0}
= f_{r-1}(z_1, \dots, z_{r-1})
+ t^{r-1}(t-1) \sum_{k=1}^n f_{r-1, \{i\}}(z_1, \dots, z_{r-1}), \\
&& f_r(z_1, \dots, z_r)|_{z_r^0}|_{z_{r-1}^0}
= f_{r-2}(z_1, \dots, z_{r-2}) \\
& + & (t^{r-1}+t^{r-2})(t-1) \sum_{|I|=1} f_{r-2, I}(z_1, \dots, z_{r-2}) \\
& + & t^{r-1}t^{r-2}(t-1)^2(t+1) \cdot \sum_{|I|=2}
f_{r-2, I}(z_1, \dots, z_{r-2}).
\een
Inductively,
one can show that
\ben
&& f_r(z_1, \dots, z_r)|_{z_r^0}|_{z_{r-1}^0} \cdots |_{z_{r-s}^0}
= \sum_{l=0}^s \alpha_{j}^{(s)} \sum_{|I|=l}  f_{r-s, I}(z_1, \dots, z_{r-s}).
\een
Taking the constant terms $z_{r-s-1}^0$ on both sides:
\ben
&& \sum_{l=0}^{s+1} \alpha_{l}^{(s+1)} \sum_{|I|=l}  f_{r-s-1, I}(z_1, \dots, z_{r-s-1}) \\
& = & f_r(z_1, \dots, z_r)|_{z_r^0}|_{z_{r-1}^0} \cdots |_{z_{r-s}^0}|_{z_{r-s-1}^0} \\
& = & \sum_{l=0}^s \alpha_{l}^{(s)} \sum_{|I|=l}  f_{r-s, I}(z_1, \dots, z_{r-s})|_{z_{r-s-1}^0} \\
& = & \sum_{l=0}^s \alpha_{l}^{(s)} \sum_{|I|=l}
\biggl(f_{r-s-1, I}(z_1, \dots, z_{r-s-1}) \\
& + & t^{r-s-1}(t-1)\sum_{k \in I^c} \prod_{i \in I}
\frac{tx_i-x_k}{x_i-x_k} \cdot
f_{r-s-1, I\cup \{k\}}(z_1, \dots, z_{r-s-1})\biggr)\\
& = & \sum_{l=0}^s \alpha_{l}^{(s)} \sum_{|I|=l}
f_{r-s-1, I}(z_1, \dots, z_{r-s-1}) \\
& + &  t^{r-s-1}(t-1) \sum_{l=0}^s \alpha_{l}^{(s)} \sum_{|I|=l}\sum_{k \in I^c}
\prod_{i \in I}
\frac{tx_i-x_k}{x_i-x_k} \cdot
f_{r-s-1, I\cup \{k\}}(z_1, \dots, z_{r-s-1}).
\een
Now note:
\ben
&& \sum_{|I|=l}\sum_{k \in I^c} \prod_{i \in I} \frac{tx_i-x_k}{x_i-x_k} \cdot
f_{r-s-1, I\cup \{k\}}(z_1, \dots, z_{r-s-1}) \\
& = & \sum_{|J|=l+1} \biggl( \sum_{k\in J}
\prod_{i \in J-\{k\}} \frac{tx_i-x_k}{x_i-x_k} \biggr)
\cdot f_{r-s-1, J}(z_1, \dots, z_{r-s-1}) \\
& = & \frac{1-t^{l+1}}{1-t}
\cdot \sum_{|J|=l+1}  f_{r-s-1, J}(z_1, \dots, z_{r-s-1}),
\een
where in the last equality we have used an identity:
\be
\sum_{i=1}^n \prod_{1\leq j \leq n, j \neq i}
\frac{tx_j -x_i}{x_j -x_i} = \frac{1-t^n}{1-t}.
\ee
So we have
\ben
&& \sum_{l=0}^{s+1} \alpha_{l}^{(s+1)} \sum_{|I|=l}  f_{r-s-1, I}(z_1, \dots, z_{r-s-1}) \\
& = & \sum_{l=0}^s \alpha_{l}^{(s)} \sum_{|I|=l}
f_{r-s-1, I}(z_1, \dots, z_{r-s-1}) \\
& + &  t^{r-s-1}\sum_{l=0}^s \alpha_{l}^{(s)} \sum_{|J|=l+1}
(t^{l+1}-1) \cdot
f_{r-s-1, J}(z_1, \dots, z_{r-s-1}).
\een
So we get a recursion relation:
\ben
&& \alpha_0^{(s+1)}= \alpha_0^{(s)}, \\
&& \alpha_l^{(s+1)} = \alpha_l^{(s)} + t^{r-s-1}(t^l-1) \alpha_{l-1}^{(s)}, \;\;\;
l=1, \dots, s+1,
\een
with the initial value $\alpha_0^{(0)} = 1$.
An easy solution is given by the elementary symmetric functions:
\be
\alpha^{(s)}_l = \prod_{j=1}^l (t^j-1) \cdot e_l(t^{r-1}, t^{r-2}, \dots, t^{r-s}).
\ee
Hence we get
\be
\alpha^{(r)}_l
= \prod_{j=1}^l (t^j-1) \cdot e_r(t^{r-1},t^{r-2}, \cdots, t, 1)
= t^{l(l-1)/2}
\cdot \prod_{j=1}^l (t^{r-l+1}-1),
\ee
where in the second equality we have used \eqref{eqn:Gauss1}.
To summarize,
we have shown that
\be
\begin{split}
& \tilde{E}^r_n p_\mu(x_1, \dots, x_n) \\
= & e_r(t^{-1}, t^{-2}, \dots) \cdot t^{-rn} \sum_{k=0}^r \prod_{i=0}^{k-1} (t^{r-i}-1) \cdot D_n^k p_\mu(x_1, \dots, x_n) \\
 = & e_r(t^{-1}, t^{-2}, \dots) \cdot
\biggl(\prod_{\alpha=1}^r \prod_{j=1}^n
\frac{1 - x_jz_\alpha/t}{1 - x_jz_\alpha}  \cdot
\prod_{1 \leq \alpha<\beta \leq r} \frac{1 - z_\alpha/z_\beta}{1 - z_\alpha/(z_\beta t)} \\
& \cdot \prod_{a=1}^{l(\mu)} (\sum_{j=1}^n x_j^{\mu_a}
+ (q^{\mu_a}-1) \sum_{\alpha=1}^r z_{\alpha}^{-\mu_a} )\biggr)\biggr|_{z_r^0z_{r-1}^0 \cdots z_1^0}.
\end{split}
\ee
It can be reformulated in terms of vertex operators:
\be \label{eqn:tildeE-vertex}
\begin{split}
\tilde{E}^r
 = & e_r(t^{-1}, t^{-2}, \dots) \cdot
\biggl[
\prod_{1 \leq \alpha<\beta \leq r} \frac{1 - z_\alpha/z_\beta}{1 - z_\alpha/(z_\beta t)}
 \cdot \exp\biggl(
\sum_{n=1}^\infty \frac{1-t^{-n}}{n} p_n \sum_{\alpha=1}^r z_\alpha^n \biggr) \\
&\cdot \exp \biggl(-\sum_{n=1}^\infty \frac{1-q^n}{n} \sum_{\alpha=1}^r z_\alpha^{-n}
\cdot n \frac{\pd}{\pd p_n} \biggr)
\biggr]\biggr|_{z_r^0z_{r-1}^0 \cdots z_1^0}.
\end{split}
\ee
This was obtained by  Awata-Kanno \cite{Awata-Kanno}.
Here we use the notation $|_{z_r^0z_{r-1}^0 \cdots z_1^0}$ to indicate the order
of taking the constant term: One first take the constant term in $z_r$,
then in $z_{r-1}$, and so on.
A symmetrized version was given ealier by Shiraishi \cite{Shiraishi}.
Our derivation follow their ideas with some differences in presentations.

\section{Vertex Realizable Operators and Their Correlations Functions}

In this Section we introduce a notion of vertex realizable operators on $\Lambda_{q,t}$
and define their correlation functions
by introducing $(u,v)$-bracket.
We also develop a method to compute the correlators of the modified Macdonald operators
$|tilde{E}^r$.

\subsection{Vertex realizable operators}

Now we introduce the following

\begin{definition}
An operator $\cA: \Lambda_{q, t} \to \Lambda_{q,t}$ with
$\{P_\mu(x; q,t)\}_{\mu \in \cP}$ as eigenvectors
with eigenvalues  $a_\mu(q,t)$ is said to be
{\em vertex realizable} of weight $r \geq 1$ if
and a Laurent series $K_\cA(z_r, \dots, z_1)$  with coefficients in $\bC(q,t)$ such that
\be
\begin{split}
\cA & = \biggl[
K_\cA(z_r, \dots, z_1)
 \cdot \exp\biggl(
\sum_{n=1}^\infty \frac{1-t^{-n}}{n} p_n \sum_{\alpha=1}^r z_\alpha^n \biggr) \\
&\cdot \exp \biggl(-\sum_{n=1}^\infty \frac{1-q^n}{n} \sum_{\alpha=1}^r z_\alpha^{-n}
\cdot n \frac{\pd}{\pd p_n} \biggr)
\biggr]\biggr|_{z_r^0z_{r-1}^0 \cdots z_1^0}.
\end{split}
\ee
We will refer to $K_\cA(z_r, \dots, z_1)$ is the {\em kernel} of $\cA$.
A linear combination of vertex realizable operators of different weights
will also be called vertex realizable.
\end{definition}

\begin{prop} \label{prop:Composition}
Suppose that $\cA$ and $\cB$ are two vertex realizable operators
of weights $r$ and $s$,
with kernels  $K_\cA(z_r, \dots, z_1)$ and $K_\cB(z_s, \dots, z_1)$,  respectively.
Then their composition $\cA\cB$ is a vertex realizable operator of weight $r+s$
with kernel
\be
\begin{split}
& \exp\biggl(
\sum_{n=1}^\infty \frac{(1-t^{-n})(1-q^n)}{n} \sum_{\alpha=s+1}^{r+s} z_\alpha^{-n}
\cdot \sum_{\beta=1}^{s} z_\alpha^{n} \biggr) \\
& \cdot K_\cA(z_{r+s}, \dots, z_{s+1}) K_\cB(z_{s}, \dots, z_{1}).
\end{split}
\ee
\end{prop}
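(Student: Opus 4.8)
The plan is to put both operators into normal-ordered (all creation exponentials to the left of all annihilation exponentials) form, stack them, and reduce the entire statement to a single application of the Weyl commutation relation \eqref{eqn:Weyl1}. For a tuple $\mathbf{z} = (z_1, \dots, z_k)$ introduce the abbreviations
\be
\Gamma_+(\mathbf{z}) = \exp\biggl(\sum_{n=1}^\infty \frac{1-t^{-n}}{n} p_n \sum_{\alpha=1}^k z_\alpha^n\biggr),
\qquad
\Gamma_-(\mathbf{z}) = \exp\biggl(-\sum_{n=1}^\infty \frac{1-q^n}{n}\sum_{\alpha=1}^k z_\alpha^{-n}\cdot n\frac{\pd}{\pd p_n}\biggr),
\ee
so that $\cA = [K_\cA(\mathbf{z})\,\Gamma_+(\mathbf{z})\Gamma_-(\mathbf{z})]|_{z_r^0\cdots z_1^0}$, and likewise for $\cB$ in an independent set of variables $\mathbf{w} = (w_1, \dots, w_s)$. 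Before any computation I record that the eigenvector clause of the definition is automatic for the composition: since $\cA$ and $\cB$ share the eigenbasis $\{P_\mu(x;q,t)\}_{\mu\in\cP}$ they commute, and $\cA\cB$ has the same eigenbasis with eigenvalues $a_\mu(q,t)b_\mu(q,t)$. It therefore remains only to produce the vertex presentation of $\cA\cB$.

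Because $\cA$ and $\cB$ are genuine operators (their constant terms have already been taken) and the two constant-term extractions act on disjoint sets of formal variables, I would first pull both extractions to the outside and factor the scalar kernels through the operators, writing
\be
\cA\cB = \bigl[K_\cA(\mathbf{z})K_\cB(\mathbf{w})\,\Gamma_+(\mathbf{z})\Gamma_-(\mathbf{z})\Gamma_+(\mathbf{w})\Gamma_-(\mathbf{w})\bigr]\big|_{\mathrm{CT}},
\ee
where $\mathrm{CT}$ denotes $|_{z_r^0\cdots z_1^0}$ followed by $|_{w_s^0\cdots w_1^0}$. The only pair that is not already normal ordered is $\Gamma_-(\mathbf{z})\Gamma_+(\mathbf{w})$. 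Applying \eqref{eqn:Weyl1} with $a_n = -(1-q^n)\sum_\alpha z_\alpha^{-n}$ coming from $\Gamma_-(\mathbf{z})$ and $b_n = (1-t^{-n})\sum_\beta w_\beta^n$ coming from $\Gamma_+(\mathbf{w})$ moves $\Gamma_-(\mathbf{z})$ to the right past $\Gamma_+(\mathbf{w})$ and contributes the central scalar $\exp(\sum_n \tfrac1n a_n b_n)$, which is precisely the exponential prefactor displayed in the statement. The surviving exponentials then coalesce, $\Gamma_+(\mathbf{z})\Gamma_+(\mathbf{w}) = \Gamma_+(\mathbf{z},\mathbf{w})$ and $\Gamma_-(\mathbf{z})\Gamma_-(\mathbf{w}) = \Gamma_-(\mathbf{z},\mathbf{w})$, since creation operators commute among themselves and likewise annihilation operators. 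Relabelling so that $(\mathbf{w},\mathbf{z}) = (z_1,\dots,z_s,z_{s+1},\dots,z_{r+s})$ puts the product into exactly the weight-$(r+s)$ vertex form, with kernel the product of the Weyl scalar, $K_\cA(z_{r+s},\dots,z_{s+1})$ and $K_\cB(z_s,\dots,z_1)$ — note that $\cA$'s variables carry the negative powers $z_\alpha^{-n}$ with $\alpha=s+1,\dots,r+s$ and $\cB$'s the positive powers $z_\beta^n$ with $\beta=1,\dots,s$, matching the ranges in the statement.

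The step that demands genuine care — and the one I expect to be the real obstacle — is justifying that the two constant-term extractions may be pulled outside and then commuted past the Weyl normal ordering. The scalar produced by \eqref{eqn:Weyl1} is a power series in the ratios $w_\beta/z_\alpha$, so it couples the $\cA$- and $\cB$-variables that were previously independent; one must verify that its expansion domain is compatible with the fixed domains in which $K_\cA$ and $K_\cB$ are expanded, so that the iterated $z^0$- and $w^0$-extractions remain well defined and are insensitive to the order between the two groups (the order within each group is preserved by the relabelling and agrees with the prescribed $|_{z_{r+s}^0\cdots z_1^0}$). Once this compatibility is checked, extracting the constant terms in the $\cA$-block alone recovers $\cA$ and in the $\cB$-block alone recovers $\cB$, so the combined extraction genuinely computes $\cA\cB$, and the identification of the kernel is complete.
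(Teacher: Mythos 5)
Your proof is correct and is essentially the paper's own: the paper's entire proof is a one-line appeal to the Weyl commutation relation \eqref{eqn:Weyl}, which you have merely spelled out — normal-ordering the annihilation block of $\cA$ past the creation block of $\cB$, merging the surviving exponentials, and relabelling — together with the eigenbasis remark and the constant-term bookkeeping that the paper leaves implicit. One remark: the Weyl scalar you actually compute is $\exp\bigl(-\sum_{n\geq 1}\frac{(1-q^n)(1-t^{-n})}{n}\sum_{\alpha=s+1}^{r+s}z_\alpha^{-n}\sum_{\beta=1}^{s}z_\beta^{n}\bigr)$, with a minus sign and with $z_\beta^n$ rather than $z_\alpha^n$; this agrees with the paper's later explicit use of the proposition (e.g.\ the factor $\exp\bigl(-\sum_{n\geq 1}\frac{(1-q^n)(1-t^{-n})}{n}z_1^{-n}z_2^{n}\bigr)$ in the computation of $\corr{\tilde{E}^1\tilde{E}^1}_{u,v}$), so the plus sign and the stray $z_\alpha^{n}$ in the proposition's displayed kernel are typos in the statement, and your claim that your scalar ``precisely'' matches the printed prefactor silently corrects them rather than exposing a defect in your argument.
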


\begin{proof}
This is a straightforward consequence of the identity:
\be \label{eqn:Weyl}
\begin{split}
& \exp \biggl( \sum_{n=1}^\infty \frac{a_n}{n} \cdot n \frac{\pd}{\pd p_n}\biggr)
\exp \biggl( \sum_{n=1}^\infty \frac{b_n}{n} p_n \biggr) \\
= & \exp \biggl( \sum_{n=1}^\infty \frac{a_nb_n}{n} \biggr) \cdot
\exp \biggl( \sum_{n=1}^\infty \frac{a_n}{n} \cdot n \frac{\pd}{\pd p_n}\biggr)
\exp \biggl( \sum_{n=1}^\infty \frac{b_n}{n} p_n \biggr).
\end{split}
\ee
\end{proof}

\subsection{The $(u,v)$-bracket and correlation functions}

For a vertex realizable operator $\cA$ as in the above Definition,
we define the $(u,v)$-bracket of $\cA$ by:
\be
\begin{split}
\corr{\cA}_{u,v}: & =  \sum_{\mu} (-u Q)^{|\mu|}\\
& \cdot \prod_{s \in \mu}
\frac{(q^{a'(s)}- vt^{l'(s)})}{(1- t^{l(s)} q^{a(s)+1})} \cdot a_\mu(q,t) \cdot
\prod_{s \in \mu} \frac{(t^{l'(s)}-u^{-1}q^{-a'(s)})}{(1 - q^{-a(s)}t^{-(l(s)+1)})}.
\end{split}
\ee
This is clearly motivated by \S \ref{sec:Specialization}:
If one changes $u$ to $ut^{-a}q^b$ and $v$ to $vt^{a}q^{-b}$,
then one gets the expressions as in \S \ref{sec:Specialization}.
We also define the normalized $(u,v)$-bracket of $\cA$ by:
\be
\corr{\cA}_{u,v}':=\frac{\corr{\cA}_{u,v}}{\corr{1}_{u,v}}.
\ee

For vertex realizable operators $\cA_1, \dots, \cA_n$,
define their $(u,v)$-correlation function by:
\be
\corr{\cA_1\cdots \cA_n}_{u,v}.
\ee
We also define
\be
\corr{\cA_1\cdots \cA_n}_{u,v}':=
\frac{\corr{\cA_1\cdots \cA_n}_{u,v}}{\corr{1}_{u,v}},
\ee
and the connected correlations functions:
\ben
 \corr{\cA_1\cA_2}^\circ_{u,v}&:= &\corr{\cA_1\cA_2}_{u,v}'
- \corr{\cA_1}_{u,v}' \cdot \corr{\cA_2}_{u,v}',\\
\corr{\cA_1\cA_2\cA_3}^\circ_{u,v} &:= & \corr{\cA_1\cA_2\cA_3}'_{u,v}
- \corr{\cA_1\cA_2}'_{u,v} \cdot \corr{\cA_3}'_{u,v}\\
&& - \corr{\cA_1\cA_3}'_{u,v} \cdot \corr{\cA_2}'_{u,v}
\corr{\cA_2\cA_3}'_{u,v} \cdot \corr{\cA_1}'_{u,v} \\
&& +2\corr{\cA_1}'_{u,v}\cdot \corr{\cA_2}'_{u,v} \cdot \corr{\cA_3}'_{u,v},
\een
etc. These notations borrowed from quantum field theory
have been used in mathematical literature,
see e.g. Okounkov \cite{Okounkov}.

\begin{prop} \label{prop:Correlator}
Let $\cA$ be a vertex realizable operator as in the above definition,
then we have:
\be \label{eqn:A'}
\begin{split}
\corr{\cA}_{u,v}'
= & \biggl[ K_\cA(z_r, \dots, z_1)
\exp \biggl(\sum_{n=1}^\infty \frac{(-1)^n}{n} (u^{n}-1)
\sum_{a=1}^r z_a^n Q^n \biggr) \\
& \cdot \exp \biggl(\sum_{n=1}^\infty \frac{(-1)^n}{n}
(1-v^n)\sum_{a=1}^r z_a^{-n} \biggr)
\biggr] \biggr|_{z_r^0z_{r-1}^0 \cdots z_1^0}.
\end{split}
\ee
\end{prop}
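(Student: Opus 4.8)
The plan is to collapse $\corr{\cA}_{u,v}$ into a single vacuum expectation value on the bosonic Fock space and then evaluate it with the Weyl commutation relation \eqref{eqn:Weyl1}, generalizing the operator computation performed at the end of Subsection~\ref{sec:Specialization} for the trivial case $\cA=\id$ (where $a_\mu\equiv1$ yields $\corr{1}_{u,v}$). First I would recognize the two products in the definition of $\corr{\cA}_{u,v}$ as coefficients of coherent-state expansions in the Macdonald basis. By \eqref{eqn:Orth2-2} the ket $E_2(\alpha_-)\vac:=\exp\bigl(\sum_{n\geq1}\frac{(-1)^{n-1}}{n}\frac{1-v^n}{1-q^n}\alpha_{-n}\bigr)\vac$ expands as $\sum_\mu\bigl(\prod_{s\in\mu}\frac{q^{a'(s)}-vt^{l'(s)}}{1-t^{l(s)}q^{a(s)+1}}\bigr)P_\mu(x;q,t)$, reproducing the left product, while \eqref{eqn:Orth3-4} provides the bra $\lvac E_1(\alpha)$ reproducing the right product, exactly as in Subsection~\ref{sec:Specialization}. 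Because $\cA$ is diagonal on the Macdonald basis, $\cA P_\mu=a_\mu P_\mu$, inserting it to the immediate left of $E_2(\alpha_-)\vac$ reinstates the factor $a_\mu(q,t)$ in every summand; the operator $K$ (multiplication by $(-uQ)^{|\mu|}$ on $P_\mu$) supplies the weight $(-uQ)^{|\mu|}$, and $\omega$ turns the bra into the basis dual to $\{P_\mu\}$ so that the pairing is diagonal. This yields
\be
\corr{\cA}_{u,v}=\lvac E_1(\alpha)\,K\omega\,\cA\,E_2(\alpha_-)\vac,
\ee
with $\corr{1}_{u,v}=\lvac E_1(\alpha)\,K\omega\,E_2(\alpha_-)\vac$ being precisely the $\cA=\id$ evaluation already carried out in Subsection~\ref{sec:Specialization}.

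Next I would substitute the vertex realization $\cA=\bigl[K_\cA\cdot V_+\cdot V_-\bigr]\big|_{z_r^0\cdots z_1^0}$, where $V_+=\exp\bigl(\sum_{n}\frac{1-t^{-n}}{n}p_n\sum_\alpha z_\alpha^n\bigr)$ is the creation part and $V_-=\exp\bigl(-\sum_{n}\frac{1-q^n}{n}\sum_\alpha z_\alpha^{-n}\cdot n\frac{\pd}{\pd p_n}\bigr)$ the annihilation part, and pull the iterated constant-term operator outside the linear expectation. The evaluation then proceeds in two reorderings. Contracting $V_-$ against the creation operators of $E_2$ via \eqref{eqn:Weyl1} produces the scalar
\be
S_1=\exp\Bigl(\sum_{n\geq1}\frac{(-1)^n}{n}(1-v^n)\sum_{\alpha=1}^r z_\alpha^{-n}\Bigr),
\ee
in which the factor $1-q^n$ of the ket coefficient cancels against the $1-q^n$ inside $V_-$. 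Commuting $K\omega$ through $V_+$ replaces each $p_n$ by $(-1)^{n-1}(-uQ)^n p_n$, and contracting the result against $E_1(\alpha)$ produces
\be
S_2=\exp\Bigl(\sum_{n\geq1}\frac{(-1)^n}{n}(u^n-1)Q^n\sum_{\alpha=1}^r z_\alpha^{n}\Bigr),
\ee
where the factor $1-t^{-n}$ cancels and the elementary identity $(1-u^{-n})(-uQ)^n=(-1)^n(u^n-1)Q^n$ is used. Since $K_\cA,S_1,S_2$ carry all the $z$-dependence while $\corr{1}_{u,v}$ is a $z$-independent scalar, one obtains $\corr{\cA}_{u,v}=\corr{1}_{u,v}\cdot\bigl[K_\cA S_1S_2\bigr]\big|_{z_r^0\cdots z_1^0}$; dividing by $\corr{1}_{u,v}$ gives exactly \eqref{eqn:A'}.

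The main obstacle is the bookkeeping in these two Weyl reorderings, and specifically the two cancellations of $1-q^n$ and $1-t^{-n}$: these are what force all $q,t$-dependence out of the exponents and into the kernel $K_\cA$ alone. Care is required to track the sign $(-1)^{n-1}$ coming from $\omega$ acting on $p_n$ together with the rescaling $p_n\mapsto(-uQ)^n p_n$ from conjugation by $K$, since these interact with the signs built into the coherent-state coefficients; once they are handled the cancellations are automatic. A minor point to justify is that the iterated constant-term operation commutes with dividing by $\corr{1}_{u,v}$, which holds because $\corr{1}_{u,v}$ is free of $z$.
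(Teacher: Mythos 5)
Your proposal is correct and follows essentially the same route as the paper: both rewrite $\corr{\cA}_{u,v}$ as the vacuum expectation value $\lvac E_1(\alpha)\,K\omega\,\cA\,E_2(\alpha_-)\vac$ using the Macdonald-basis expansions \eqref{eqn:Orth2-2}--\eqref{eqn:Orth3-4}, substitute the vertex realization of $\cA$, and evaluate by the Weyl commutation relation \eqref{eqn:Weyl}, with the residual $z$-independent pairing giving $\corr{1}_{u,v}$. Your write-up in fact makes explicit the two contractions, the conjugation of $V_+$ by $K\omega$, and the cancellations of $1-q^n$ and $1-t^{-n}$ that the paper's proof leaves implicit, and these check out.
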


\begin{proof}
By slightly generalizing the computations in \S \ref{sec:Specialization},
we have:
\ben
\corr{\cA}_{u,vz}
& = & \lvac \exp \biggl(\sum_{n=1}^\infty \frac{(-1)^{n-1}}{n} \frac{1-u^{-n}}{1-t^{-n}}
\alpha_{n} \biggr) \\
&& K \omega \cA  \exp \biggl(\sum_{n=1}^\infty
\frac{(-1)^{n-1}}{n}\frac{1-v^n}{1-q^n}
 \alpha_{-n} \biggr) \vac \\
& = & \lvac \exp \biggl(\sum_{n=1}^\infty \frac{1}{n} \frac{1-u^{-n}}{1-t^{-n}}
(-uQ)^n\alpha_{n} \biggr) \\
&& \cdot  \biggl[
K_\cA(z_r, \dots, z_1)
 \cdot \exp\biggl(
\sum_{n=1}^\infty \frac{1-t^{-n}}{n}   \sum_{a=1}^r z_a^n \alpha_{-n} \biggr) \\
&& \cdot \exp \biggl(-\sum_{n=1}^\infty \frac{1-q^n}{n} \sum_{a=1}^r z_a^{-n}
\cdot \alpha_n \biggr) \biggr] \biggr|_{z_r^0z_{r-1}^0 \cdots z_1^0} \\
&& \cdot \exp \biggl(\sum_{n=1}^\infty
\frac{(-1)^{n-1}}{n}\frac{1-v^n}{1-q^n}
 \alpha_{-n} \biggr) \vac. \een
Then \eqref{eqn:A'} can be obtained by applying \eqref{eqn:Weyl}.
\end{proof}

\subsection{Explicit computations of correlation functions of $\tilde{E}^r$}

Using Proposition \ref{prop:Composition} and Proposition \ref{prop:Correlator},
it is possible to compute $\corr{\tilde{E}^{r_1} \cdots \tilde{E}^{r_n}}'_{u,v}$.
For example,
by \eqref{eqn:tildeE-vertex} and \eqref{eqn:A'} we get:
\ben
\corr{\tilde{E}^r}_{u,v}'
& = &  e_r(t^{-1}, t^{-2}, \dots) \cdot
\biggl[\prod_{1 \leq \alpha<\beta \leq r} \frac{1 - z_\alpha/z_\beta}{1 - z_\alpha/(z_\beta t)} \\
&& \cdot
\exp \biggl(\sum_{n=1}^\infty \frac{(-1)^n}{n} (u^{n}-1)
\sum_{a=1}^r z_a^n Q^n \biggr) \\
&& \cdot \exp \biggl(\sum_{n=1}^\infty \frac{(-1)^n}{n}
(1-v^n)\sum_{a=1}^r z_a^{-n} \biggr)
\biggr] \biggr|_{z_r^0z_{r-1}^0 \cdots z_1^0}.
\een
Note we have
\ben
&& \exp \biggl(\sum_{n=1}^\infty \frac{(-1)^n}{n} (u^{n}-1) z^n Q^n \biggr)
= \frac{1+zQ}{1+u zQ}
= 1 + \sum_{n=1}^n (-1)^n u^{n-1}(u - 1) Q^nz^n, \\
&& \exp \biggl(\sum_{n=1}^\infty \frac{(-1)^n}{n}(1-v^n) z^{-n} \biggr)
= \frac{1+vz^{-1}}{1+z^{-1}}
=  1 + \sum_{n=1}^\infty (-1)^n (1-v) z^{-n}.
\een
For the purpose of simplifying the notations in
the computations of $\corr{\tilde{E}^r}$,
we introduce the following notation:
Let $f$ be formal Laurent series in $z_1, \dots, z_r$,
define:
\be
\Corr{f}_r:=\biggl[f \cdot \prod_{1 \leq \alpha<\beta \leq r}
\frac{1 - z_\alpha/z_\beta}{1 - z_\alpha/(z_\beta t)} \cdot
\prod_{a=1}^r \frac{1+z_a Q}{1+u z_a Q}  \cdot \prod_{a=1}^r \frac{1+vz_a^{-1}}{1+z_a^{-1}}
\biggr] \biggr|_{z_r^0\cdots z_1^0}.
\ee
Expressions on the right-hand side of the above definition
are not understood as rational function,
but as series as follows:
\ben
&& \frac{1-z_\alpha/z_\beta}{1-z_\alpha/(z_\beta t)}
= 1+ (t^{-1} -1) \sum_{k=1}^\infty t^{-(k-1)} z_\alpha^k z_\beta^{-k},\\
&&   \frac{1+z_aQ}{1+uz_aQ}
=  1 + \sum_{n=1}^n (-1)^{n-1} u^{n-1}(1 - u) Q^nz_a^n, \\
&&  \frac{1+vz_a^{-1}}{1+z_a^{-1}}
=  1 + \sum_{n=1}^\infty (-1)^n (1-v) z_a^{-n}.
\een
It is also useful to define:
\be
\Cors{f}_r:= \biggl[f \cdot \frac{1+z_r Q}{1+u z_r Q}  \cdot \frac{1+vz_r^{-1}}{1+z_r^{-1}}
\biggr] \biggr|_{z_r^0}.
\ee
Note for a formal Laurent series $f$ in $z_1, \dots, z_r$,
\be
\Corr{f}_r = \COrs{\cdots \COrs{\COrs{f \cdot \prod_{1 \leq \alpha< r} \frac{1 - z_\alpha/z_r}{1 - z_\alpha/(z_r t)}}_r
\cdot \prod_{1 \leq \alpha< r-1} \frac{1 - z_\alpha/z_{r-1}}{1 - z_\alpha/(z_{r-1} t)} }_{r-1}  \cdots}_1.
\ee
In particular, for a formal Laurent series $f$ in $z_1$,
\be
\Corr{f}_1 = \Cors{f}_1,
\ee
and so we have
\be
\Cors{z_r^k}_r = \Cors{z_1^k}_1 = \Corr{z_1^k}_1.
\ee
So the computations of $\Corr{f}_r$ can be reduced to the computations of
$\Cors{z_1^k}_1 = \Corr{z_1^k}_1$.
This is our method for the computations of correlators of
$\tilde{E}^r$.

\subsubsection{Computations of $\Cors{z_1^k}_1$}

\begin{prop}
The following identities hold:
\bea
&& \Cors{1}_1 = 1 - Q \frac{(1-u)(1-v)}{1-uQ}, \\
&&  \Cors{z_1^k}_1  = (-1)^k \cdot \frac{(1-v)(1-Q)}{1-uQ}, \\
&& \Cors{ z_1^{-k} }_1  = (-uQ)^{k-1} \cdot Q\frac{(1-u)(1-uvQ)}{1-uQ},
\eea
where $k> 0$.
\end{prop}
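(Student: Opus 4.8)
The plan is to read each bracket $\Cors{z_1^{k}}_1$ directly as a coefficient-extraction from the product of the two explicit series provided in the text just before the proposition. By the definition of the single-variable bracket,
\be
\Cors{z_1^{k}}_1 = \biggl[ z_1^{k} \cdot \frac{1+z_1 Q}{1+u z_1 Q} \cdot \frac{1+vz_1^{-1}}{1+z_1^{-1}} \biggr]\biggr|_{z_1^0},
\ee
so writing $A(z) := \frac{1+zQ}{1+uzQ}$ and $B(z) := \frac{1+vz^{-1}}{1+z^{-1}}$, the quantity $\Cors{z_1^{k}}_1$ is simply the coefficient of $z^{-k}$ in $A(z)B(z)$, while $\Cors{z_1^{-k}}_1$ is the coefficient of $z^{+k}$ and $\Cors{1}_1$ the coefficient of $z^0$. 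The three asserted formulas will then follow from one and the same computation applied to the three target exponents $-k$, $0$, $+k$.

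First I would record the two series exactly as displayed in the text, namely $A(z)=\sum_{n\geq 0} a_n z^n$ with $a_0=1$ and $a_n = (-1)^{n-1}u^{n-1}(1-u)Q^n$ for $n\geq 1$, and $B(z)=\sum_{m\geq 0} b_m z^{-m}$ with $b_0=1$ and $b_m=(-1)^m(1-v)$ for $m\geq 1$. The coefficient of $z^{j}$ in $A(z)B(z)$ is then the convolution $\sum_{n-m=j} a_n b_m$. In each of the three cases this sum splits as a single boundary term (coming from $a_0$ or $b_0$) plus a tail in which both indices are positive. In the tail the two alternating signs collapse to a fixed sign, since $(-1)^{n-1}(-1)^{n+k}=(-1)^{k-1}$ for $j=-k$ and $(-1)^{m+k-1}(-1)^{m}=(-1)^{k-1}$ for $j=+k$, and the summand becomes a constant times $(uQ)^{\mathrm{index}}$. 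Hence each tail is a geometric series with ratio $uQ$ summing to a multiple of $\tfrac{1}{1-uQ}$.

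Carrying this out for $j=0$ gives the boundary term $a_0b_0=1$ and a tail equal to $-\tfrac{Q(1-u)(1-v)}{1-uQ}$, which is the first identity. For $j=-k$ the boundary term is $a_0 b_k=(-1)^k(1-v)$ and the tail contributes $-(-1)^k\tfrac{Q(1-u)(1-v)}{1-uQ}$; the decisive step is the numerator simplification $(1-uQ)-Q(1-u)=1-Q$, which collapses the two pieces into $(-1)^k\tfrac{(1-v)(1-Q)}{1-uQ}$. For $j=+k$ the boundary term is $a_k b_0=(-1)^{k-1}u^{k-1}(1-u)Q^k$ and the tail factors out $u^{k-1}Q^k$ before summing, so the decisive step here is $(1-uQ)+(1-v)uQ=1-uvQ$, after which recognizing $(-1)^{k-1}u^{k-1}Q^{k-1}=(-uQ)^{k-1}$ yields $(-uQ)^{k-1}Q\tfrac{(1-u)(1-uvQ)}{1-uQ}$. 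I expect the only genuine obstacle to be bookkeeping: keeping the sign $(-1)^{n-1}$ versus $(-1)^m$ and the index shift $m=n+k$ (respectively $n=m+k$) straight, together with the two numerator simplifications above; beyond that, each case is a single geometric-series summation.
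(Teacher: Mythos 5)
Your proposal is correct and follows essentially the same route as the paper: both expand $\frac{1+z_1Q}{1+uz_1Q}$ and $\frac{1+vz_1^{-1}}{1+z_1^{-1}}$ into the displayed series, extract the constant term by convolution, and sum the resulting geometric series in $uQ$. Your sign bookkeeping and the two numerator simplifications $(1-uQ)-Q(1-u)=1-Q$ and $(1-uQ)+(1-v)uQ=1-uvQ$ all check out against the paper's direct computation.
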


\begin{proof}
These can also be obtained by direct computations.
\ben
\Cors{1}_1  & = & \biggl[
\frac{1+z_1 Q}{1+u z_1 Q}  \cdot \frac{1+vz_1^{-1}}{1+z_1^{-1}}
\biggr] \biggr|_{z_1^0} \\
& = & \biggl[ (1 + \sum_{n=1}^n (-1)^{n-1} u^{n-1}(1 - u) Q^nz_a^n)
\cdot ( 1 + \sum_{n=1}^\infty (-1)^n (1-v) z_a^{-n} )
\biggr] \biggr|_{z_2^0} \\
& = & 1 - \sum_{n=1}^\infty (1-u)(1-v) u^{n-1}Q^n
= 1 - Q \frac{(1-u)(1-v)}{1-uQ}.
\een

\ben
\Cors{z_1^k}_1
& = & \biggl[z_2^{k} \cdot  (1 + \sum_{n=1}^n (-1)^{n-1} u^{n-1}(1 - u) Q^nz_a^n)
\cdot ( 1 + \sum_{n=1}^\infty (-1)^n (1-v) z_a^{-n} )
\biggr] \biggr|_{z_2^0} \\
& = & (-1)^k (1-v)
+ \sum_{n=1}^\infty (-1)^{k-1} u^{n-1}(1-u)(1-v) Q^n  \\
& = & (-1)^k (1-v) \cdot  \biggl( 1 - \frac{1-u}{1-uQ} Q \biggr)
= (-1)^k \cdot \frac{(1-v)(1-Q)}{1-uQ}.
\een
\ben
\Cors{ z_2^{-k} }_2
& = & \biggl[z_2^{-k} \cdot  (1 + \sum_{n=1}^n (-1)^{n-1} u^{n-1}(1 - u) Q^nz_a^n)
\cdot ( 1 + \sum_{n=1}^\infty (-1)^n (1-v) z_a^{-n} )
\biggr] \biggr|_{z_2^0} \\
& = & (-1)^{k-1} u^{k-1}(1-u)Q^k
+ \sum_{n=1}^\infty (-1)^{k-1} u^{k+n-1}(1-u)(1-v)Q^{k+n}  \\
& = & (-1)^{k-1} u^{k-1}(1-u)Q^k \biggl( 1
+ (1-v) \sum_{n=1}^\infty u^{n} Q^{n} \biggr) \\
& = & (-1)^{k-1} u^{k-1}(1-u)Q^k \biggl( 1
+ (1-v) \cdot \frac{uQ}{1-uQ}  \biggr) \\
& = & Q (-uQ)^{k-1} \cdot \frac{(1-u)(1-uvQ)}{1-uQ}.
\een
\end{proof}

As a corollary,
\be
\corr{\tilde{E}^1}_{u,v} = \frac{t^{-1}}{1-t^{-1}} \biggl(1 - Q \frac{(1-u)(1-v)}{1-uQ} \biggr).
\ee

\subsubsection{One-point function of $\tilde{E}^2$}

Our result for $\corr{\tilde{E}^2}'_{u,v}$ is
\be \label{eqn:E2}
\begin{split}
\corr{\tilde{E}^2}'_{u,v}
= & \frac{t^{-3}}{(1-t^{-1})(1-t^{-2})} \biggl[
\biggl(1- Q \frac{(1-u)(1-v)}{1-u Q}\biggr)^2 \\
+ & (1-t^{-1}) Q \frac{(1-Q)(1-u)(1-v)(1-uvQ)}{(1-t^{-1}uQ)(1-uQ)^2}
\biggr].
\end{split}
\ee
First we have:
\ben
\corr{\tilde{E}^2}_{u,v}'
& = &  e_2(t^{-1}, t^{-2}, \dots) \cdot
\biggl[ \frac{1 - z_1/z_2}{1 - z_1/(z_2 t)} \cdot
\prod_{a=1}^2 \frac{1+z_a Q}{1+u z_a Q}  \cdot \prod_{a=1}^2 \frac{1+vz_a^{-1}}{1+z_a^{-1}}
\biggr] \biggr|_{z_2^0z_1^0},
\een
and so
\ben
&& \corr{\tilde{E}^2}_{u,v}'
= \frac{t^{-3}}{(1-t^{-1})(1-t^{-2})} \cdot \Corr{1}_2.
\een
We use the expansions
\ben
&& \frac{1-z_1/z_2}{1-z_1/(z_2t)}
= 1+ (t^{-1} -1) \sum_{k=1}^\infty t^{-(k-1)} z_1^k z_2^{-k},
\een
to get:
\ben
\Corr{1}_2 & = & \COrs{ \COrs{\frac{1-z_1/z_2}{1-z_1/(z_2t)}}_2}_1
= \COrs{\Cors{1}_2+ (t^{-1} -1) \COrs{\sum_{k=1}^\infty t^{-(k-1)} z_1^k z_2^{-k}}_2}_1 \\
& = & \Corr{1}_1^2+ (t^{-1} -1) \cdot \Corr{ \sum_{k=1}^\infty t^{-(k-1)} z_1^k
\cdot (-uQ)^{k-1} \cdot Q \frac{(1-u)(1-uvQ)}{1-uQ}}_1 \\
& = & \Corr{1}_1^2+
+ (t^{-1}-1)Q \frac{(1-u)(1-uvQ)}{1-uQ} \cdot
\Corr{\sum_{k=1}^\infty (-t^{-1}uQ)^{k-1} z_1^k}_1.
\een
We rewrite it in the following form:
\be
\Corr{1}_2 =  \Corr{1}_1^2
+ (t^{-1}-1)Q \frac{(1-u)(1-uvQ)}{1-uQ} \cdot
\COrr{\frac{z_1}{1+t^{-1}uQ z_1}}_1.
\ee
Now note
\be
\COrr{\frac{z_1}{1+t^{-1}u Q z_1} }_1
= -\frac{1}{1-t^{-1}uQ} \cdot \frac{(1-v)(1-Q)}{1-uQ}.
\ee
Indeed,
\ben
\COrr{\frac{z_1}{1+t^{-1}uQ z_1}}_1
&  = &   \sum_{k=1}^\infty (-t^{-1}u)^{k-1}Q^{k-1} \cdot \Cors{z_1^k}_1 \\
& = & -\sum_{k=1}^\infty t^{-(k-1)} u^{k-1} Q^{k-1}  \cdot  \frac{(1-v)(1-Q)}{(1-uQ)} \\
& = & - \frac{1}{1-t^{-1}uQ} \cdot \frac{(1-v)(1-Q)}{1-uQ}.
\een
This finishes the computation for $\corr{\tilde{E}^2}'_{u,v}$.

\subsubsection{One-point function of $\tilde{E}^3$}

Now we come to compute $\corr{\tilde{E}^3}'_{u,v}$ in the same fashion.
Recall
\ben
\corr{\tilde{E}^3}_{u,v}'
& = &  e_3(t^{-1}, t^{-2}, \dots) \\
&& \cdot
\biggl[\prod_{1 \leq \alpha<\beta \leq 3} \frac{1 - z_\alpha/z_\beta}{1 - z_\alpha/(z_\beta t)}
\cdot
\prod_{a=1}^3 \frac{1+z_a Q}{1+u z_a Q}  \cdot \prod_{a=1}^3 \frac{1+vz_a^{-1}}{1+z_a^{-1}}
\biggr] \biggr|_{z_3^0z_2^0z_1^0}.
\een
We first get:
\ben
&& \COrs{\frac{1 - z_1/z_3}{1 - z_1/(z_3 t)} \cdot
\frac{1 - z_2/z_3}{1 - z_2/(z_3 t)}  }_3 \\
& = & \COrs{ (1+ (t^{-1} -1) \sum_{k=1}^\infty t^{-(k-1)} z_1^k z_3^{-k})
\cdot (1+ (t^{-1} -1) \sum_{l=1}^\infty t^{-(l-1)} z_2^l z_3^{-l} ) }_3 \\
& = & \Cors{1}_3  + (t^{-1} -1) \sum_{k=1}^\infty t^{-(k-1)} z_1^k \COrs{z_3^{-k}}_3
+ (t^{-1} -1) \sum_{l=1}^\infty t^{-(l-1)} z_2^l \COrs{z_3^{-l}}_3 \\
& + & (t^{-1} -1)^2 \sum_{k,l =1}^\infty t^{-(k-1)} z_1^k
\cdot t^{-(l-1)} z_2^l \COrs{z_3^{-k-l}}_3 \\
& = & \Corr{1}_1
+ (t^{-1} -1) \sum_{k=1}^\infty t^{-(k-1)} z_1^k (-uQ)^{k-1} \cdot Q \frac{(1-u)(1-uvQ)}{1-uQ} \\
& + & (t^{-1} -1) \sum_{l=1}^\infty t^{-(l-1)} z_2^l(-uQ)^{l-1} \cdot Q \frac{(1-u)(1-uvQ)}{1-uQ} \\
& + & (t^{-1} -1)^2 \sum_{k,l =1}^\infty t^{-(k-1)} z_1^k
\cdot t^{-(l-1)} z_2^l (-uQ)^{k+l-1} \cdot Q \frac{(1-u)(1-uvQ)}{1-uQ}.
\een
We rewrite it as follows:
\ben
\Corr{1}_3 & = & \Corr{1}_1 \cdot \Corr{1}_2 \\
& + & Q (t^{-1}-1) \frac{(1-u)(1-uvQ)}{1-uQ} \cdot \COrr{\frac{z_1}{1+t^{-1}uQ z_1} }_2 \\
& + &  Q (t^{-1}-1) \frac{(1-u)(1-uvQ)}{1-uQ} \cdot \COrr{\frac{z_2}{1+t^{-1}uQ z_2} }_2 \\
& - & uQ^2 (t^{-1}-1)^2 \frac{(1-u)(1-uvQ)}{1-uQ} \cdot
\COrr{\frac{z_1}{1+t^{-1}uQ z_1} \cdot \frac{z_2}{1+t^{-1}uQ z_2}}_2.
\een

\noindent Computations for $\COrr{\frac{z_1}{1+t^{-1}uQ z_1} }_2$.
From
\ben
&& \Cors{ z_1^l (1+ (t^{-1} -1) \sum_{k=1}^\infty t^{-(k-1)} z_1^k z_2^{-k}) }_2 \\
& = & z_1^l ( \Cors{1}_2+ (t^{-1} -1) \sum_{k=1}^\infty t^{-(k-1)} z_1^k \Cors{ z_2^{-k}}_2 ) \\
& = & z_1^l(\Corr{1}_1 + (t^{-1} -1) \sum_{k=1}^\infty t^{-(k-1)} z_1^k \Corr{ z_1^{-k}}_1 ) \\
& = & z_1^l\biggl(\Corr{1}_1 + (t^{-1} -1) \sum_{k=1}^\infty t^{-(k-1)} z_1^k
\cdot (-uQ)^{k-1} \cdot Q\frac{(1-u)(1-uvQ)}{1-uQ} \biggr) \\
& = & \Corr{1}_1 z_1^l
+ (t^{-1} -1) Q\frac{(1-u)(1-uvQ)}{1-uQ}\cdot z_1^l
\cdot \frac{z_1}{1+ut^{-1}Qz_1},
\een
we get:
\ben
\COrr{\frac{z_1}{1+t^{-1}uQ z_1} }_2
& = & \Corr{1}_1 \cdot
\COrr{\frac{z_1}{1+t^{-1}uQ z_1} }_1  \\
&+ & (t^{-1}-1) \cdot Q \frac{(1-u)(1-uvQ)}{1-uQ} \cdot
\COrr{\frac{z_1}{1+t^{-1}uQ z_1} \cdot \frac{z_1}{1+t^{-1}uQ z_1} }_1.
\een
We have
\ben
&& \COrr{\frac{z_1}{1+t^{-1}uQ z_1} \cdot \frac{z_1}{1+t^{-1}uQ z_1} }_1 \\
& = & \Corr{\sum_{l\geq 1} (-t^{-1}uQ)^{l-1} z_1^l \cdot
\sum_{k=1}^\infty (-t^{-1}uQ)^{k-1}z_1^k}_1 \\
& = & \sum_{k,l\geq 1} (-t^{-1}uQ)^{l-1}  \cdot
 (-t^{-1}uQ)^{k-1} \cdot \Corr{z_1^{k+l}}_1 \\
& = & \sum_{k,l\geq 1} (-t^{-1}uQ)^{l-1} \cdot
 (-t^{-1}uQ)^{k-1} \cdot (-1)^{k+l} \cdot \frac{(1-v)(1-Q)}{(1-uQ)} \\
& = & \frac{1}{(1-ut^{-1}Q)^2} \cdot \frac{(1-v)(1-Q)}{1-uQ}.
\een
In the same fashion one can prove the following identity:
\be
\COrr{\biggl(\frac{z_1}{1+t^{-1}u Q z_1} \biggr)^m}_1
= \frac{(-1)^m}{(1-t^{-1}uQ)^m} \cdot \frac{(1-v)(1-Q)}{(1-uQ)}.
\ee

\noindent Computations for $\COrr{\frac{z_2}{1+t^{-1}uQ z_2} }_2$.
We need to first compute
\ben
&& \sum_{l \geq 1} (-t^{-1}uQ)^{l-1} \COrs{z_2^l(1+ (t^{-1} -1) \sum_{k=1}^\infty t^{-(k-1)} z_1^k z_2^{-k})}_2 \\
& = & \sum_{l \geq 1} (-t^{-1}uQ)^{l-1} \Cors{z_2^l}_2
+ (t^{-1} -1) \sum_{k, l \geq 1} (-t^{-1}uQ)^{l-1}  t^{-(k-1)} z_1^k \Cors{z_2^{l-k}}_2.
\een
We have two parts to consider.
Part one:
\ben
&& \sum_{l \geq 1} (-t^{-1}uQ)^{l-1} \Cors{z_2^l}_2
= \sum_{l \geq 1} (-t^{-1}uQ)^{l-1} \cdot \Corr{z_1^l}_1 \\
& = & \COrr{\frac{z_1}{1+ut^{-1}Qz_1}}_1 = -\frac{1}{1-t^{-1}uQ} \cdot \frac{(1-v)(1-Q)}{(1-uQ)}.
\een
Part two without a factor of $(t^{-1} -1)$:
\ben
&& \sum_{k, l \geq 1} (-t^{-1}uQ)^{l-1}  t^{-(k-1)} z_1^k \Cors{z_2^{l-k}}_2
= \sum_{k, l \geq 1} (-t^{-1}uQ)^{l-1}  t^{-(k-1)} z_1^k \Corr{z_1^{l-k}}_1 \\
& = & \sum_{k \geq 1} (-t^{-1}uQ)^{k-1}  t^{-(k-1)} z_1^k \Corr{1}_1
+ \sum_{l> k \geq 1} (-t^{-1}uQ)^{l-1}  t^{-(k-1)} z_1^k \Corr{z_1^{l-k}}_1 \\
& + & \sum_{k > l \geq 1} (-t^{-1}uQ)^{l-1}  t^{-(k-1)} z_1^k \Corr{z_1^{l-k}}_1 \\
& = & \Corr{1}_1 \cdot  \frac{z_1}{1+ut^{-1}Qz_1}
+  \sum_{l> k \geq 1} (-t^{-1}uQ)^{l-1}  t^{-(k-1)} z_1^k
\cdot (-1)^{l-k} \frac{(1-v)(1-Q)}{(1-uQ)} \\
& + & \sum_{k > l \geq 1} (-t^{-1}uQ)^{l-1}  t^{-(k-1)} z_1^k
\cdot (-uQ)^{k-l-1} \cdot Q\frac{(1-u)(1-uvQ)}{1-uQ}.
\een
After simplification it becomes:
\ben
&& \Corr{1}_1 \cdot  \frac{z_1}{1+ut^{-1}Qz_1}
+  \sum_{l> k \geq 1} (-1)^{k-1}t^{-k-l+2}u^{l-1}Q^{l-1}  z_1^k \cdot
\frac{(1-v)(1-Q)}{(1-uQ)} \\
& + & \sum_{k > l \geq 1} (-1)^k t^{-k-l+2}u^{k-2}Q^{k-2}  z_1^k
 \cdot Q\frac{(1-u)(1-uvQ)}{1-uQ}.
\een
Take $\Cors{\cdot}_1$:
\ben
&& \Corr{1}_1 \cdot  \frac{1}{1-t^{-1}uQ} \cdot \frac{(1-v)(1-Q)}{(1-uQ)} \\
& + & \sum_{l> k \geq 1} (-1)^{k-1}t^{-k-l+2}u^{l-1}Q^{l-1}  \cdot
(-1)^k \cdot \frac{(1-v)(1-Q)}{(1-uQ)} \cdot
\frac{(1-v)(1-Q)}{(1-uQ)} \\
& + & \sum_{k > l \geq 1} (-1)^k t^{-k-l+2}u^{k-2}Q^{k-2} \cdot
(-1)^k \cdot \frac{(1-v)(1-Q)}{(1-uQ)}
 \cdot Q\frac{(1-u)(1-uvQ)}{1-uQ} \\
& = & \Corr{1}_1 \cdot  \frac{1}{1-t^{-1}uQ} \cdot \frac{(1-v)(1-Q)}{(1-uQ)} \\
& - & \frac{1}{1-ut^{-2}Q} \cdot \frac{u t^{-1}Q}{1- u t^{-1} Q}
\cdot \frac{(1-v)(1-Q)}{(1-uQ)} \cdot \frac{(1-v)(1-Q)}{(1-uQ)} \\
& + & \frac{t^{-1}}{1-ut^{-2}Q} \cdot \frac{1}{1- u t^{-1} Q}
\cdot \frac{(1-v)(1-Q)}{(1-uQ)} \cdot Q\frac{(1-u)(1-uvQ)}{1-uQ}.
\een

In the above we have used the following summations:
\ben
 \sum_{l> k \geq 1} t^{-k-l+2}u^{l-1}Q^{l-1}
& = &\sum_{j, k \geq 1} t^{-j-2k+2}u^{j+k-1}Q^{j+k-1} \\
&= &\frac{1}{1-ut^{-2}Q} \cdot \frac{u t^{-1}Q}{1- u t^{-1} Q}, \\
\sum_{k > l \geq 1} t^{-k-l+2}u^{k-2}Q^{k-2}
& = & \sum_{m, l \geq 1} t^{-m-2l+2}u^{l+m-2}Q^{l+m-2} \\
& = & \frac{t^{-1}}{1-ut^{-2}Q} \cdot \frac{1}{1- u t^{-1} Q}.
\een
The computation of $\COrr{\frac{z_1}{1+t^{-1}uQ z_1} \cdot \frac{z_2}{1+t^{-1}uQ z_2}}_2$
is similar and will be omitted.
From this example it is clear that the complexity the computation increase very rapidly.

\subsubsection{Computations of $\corr{\tilde{E}^1\tilde{E}^1}$}
Now we compute the two-point function of $\tilde{E}^1$:
\ben
\corr{\tilde{E}^1\tilde{E}^1}_{u,v}
& = & \biggl(\frac{t^{-1}}{1-t^{-1}}  \biggr)^2
\lvac \exp \biggl(\sum_{n=1}^\infty \frac{(-1)^{n-1}}{n} \frac{1-u^{-n}}{1-t^{-n}}
\alpha_{n} \biggr) K \omega \\
&& \cdot \biggl[ \exp\biggl(
\sum_{n=1}^\infty \frac{1-t^{-n}}{n} \alpha_{-n} z_1^n \biggr)
\cdot \exp \biggl(-\sum_{n=1}^\infty \frac{1-q^n}{n} z_1^{-n}
\cdot \alpha_n \biggr) \biggr]\biggr|_{z_1^0} \\
&& \cdot \biggl[ \exp\biggl(
\sum_{n=1}^\infty \frac{1-t^{-n}}{n} \alpha_{-n} z_2^n \biggr)
\cdot \exp \biggl(-\sum_{n=1}^\infty \frac{1-q^n}{n} z_2^{-n}
\cdot \alpha_n \biggr)\biggr]\biggr|_{z_2^0}  \\
&& \cdot \exp \biggl(\sum_{n=1}^\infty
\frac{(-1)^{n-1}}{n}\frac{1-v^n}{1-q^n}
 \alpha_{-n} \biggr) \vac \\
& = &  \biggl(\frac{t^{-1}}{1-t^{-1}}  \biggr)^2
\lvac \exp \biggl(\sum_{n=1}^\infty \frac{1}{n} \frac{1-u^{-n}}{1-t^{-n}}
(-uQ)^n\alpha_{n} \biggr) \\
&& \cdot \exp\biggl(
\sum_{n=1}^\infty \frac{1-t^{-n}}{n} \alpha_{-n} z_1^n \biggr)
\cdot \exp \biggl(-\sum_{n=1}^\infty \frac{1-q^n}{n} z_1^{-n}
\cdot \alpha_n \biggr)  \\
&& \cdot  \exp\biggl(
\sum_{n=1}^\infty \frac{1-t^{-n}}{n} \alpha_{-n} z_2^n \biggr)
\cdot \exp \biggl(-\sum_{n=1}^\infty \frac{1-q^n}{n} z_2^{-n}
\cdot \alpha_n \biggr)  \\
&& \cdot \exp \biggl(\sum_{n=1}^\infty
\frac{(-1)^{n-1}}{n}\frac{1-v^n}{1-q^n}
 \alpha_{-n} \biggr) \vac \biggr|_{z_2^0z_1^0}  . \een
It follows that
\ben
&& \corr{\tilde{E}^1\tilde{E}^1}_{u,v}
= \biggl(\frac{t^{-1}}{1-t^{-1}}  \biggr)^2 \\
&& \;\;\;\; \cdot \exp \biggl(\sum_{n=1}^\infty \frac{1}{n} (1-u^{-n})
(-uQ)^nz_2^n \biggr)
\cdot \exp \biggl(\sum_{n=1}^\infty \frac{1}{n} (1-u^{-n})
(-uQ)^nz_1^n \biggr) \\
&& \;\;\;\; \cdot  \exp \biggl(-\sum_{n=1}^\infty \frac{(1-q^n)(1-t^{-n})}{n} z_1^{-n}z_2^{n} \biggr)  \\
&& \;\;\;\; \cdot  \exp \biggl(\sum_{n=1}^\infty \frac{(-1)^n}{n} (1-v^n) z_1^{-n}  \biggr)
\cdot \exp \biggl(\sum_{n=1}^\infty
\frac{(-1)^n}{n}(1-v^n) z_2^{-n} \biggr)  \biggr|_{z_2^0z_1^0} .
\een
We rewrite as
\ben
&& \corr{\tilde{E}^1\tilde{E}^1}_{u,v}
= \biggl(\frac{t^{-1}}{1-t^{-1}}  \biggr)^2 \\
&& \;\;\;\; \cdot \prod_{a=1}^2 \frac{1 + z_aQ}{1+uz_aQ} \cdot
\prod_{a=1}^2 \frac{1+vz_a^{-1}}{1+z_a^{-1}} \cdot
\frac{(1-z_2z_1^{-1})(1-qt^{-1}z_2z_1^{-1})}{(1-qz_2z_1^{-1})(1-t^{-1}z_2z_1^{-1})}
 \biggr|_{z_2^0z_1^0}.
\een
We use the following expansions:
\ben
&& \exp \biggl(\sum_{n=1}^\infty \frac{(-1)^n}{n} (u^{n}-1) z_a^n Q^n \biggr)
=  \frac{1 + z_aQ}{1+uz_aQ}
= 1 - \sum_{n=1}^n (-1)^n u^{n-1}(1-u) Q^nz_a^n, \\
&& \exp \biggl(\sum_{n=1}^\infty \frac{(-1)^n}{n}(1-v^n) z_a^{-n} \biggr)
= \frac{1+vz_a^{-1}}{1+z_a^{-1}}
= 1 + \sum_{n=1}^\infty (-1)^n (1-v) z_a^{-n}, \\
&& \exp \biggl(-\sum_{n=1}^\infty \frac{(1-q^n)(1-t^{-n})}{n} z_2^nz_1^{-n} \biggr)
=  \frac{(1-z_2z_1^{-1})(1-qt^{-1}z_2z_1^{-1})}{(1-qz_2z_1^{-1})(1-t^{-1}z_2z_1^{-1})}\\
&& \;\;\;\;\;\; = 1 - \sum_{n=1}^\infty (1-q)(1-t^{-1}) \frac{q^n - t^{-n}}{q-t^{-1}} z_2^nz_1^{-n}.
\een
The last equality is an application of the following identity:
\be
\frac{(1-x)(1-t_1t_2x)}{(1-t_1x)(1-t_2x)} = 1 - \sum_{n=1}^\infty
(1-t_1)(1-t_2) \frac{t_1^n-t_2^n}{t_1-t_2} x^n.
\ee
We get:
\ben
&& \COrs{\frac{(1-z_2z_1^{-1})(1-qt^{-1}z_2z_1^{-1})}{(1-qz_2z_1^{-1})(1-t^{-1}z_2z_1^{-1})} }_2 \\
& = & \COrs{1 - \sum_{n=1}^\infty (1-q)(1-t^{-1})
\frac{q^n - t^{-n}}{q-t^{-1}} z_2^nz_1^{-n}}_2 \\
& = & \Cors{1}_2 - \sum_{n=1}^\infty (1-q)(1-t^{-1})
\frac{q^n - t^{-n}}{q-t^{-1}} z_1^{-n} \Cors{z_2^n }_2 \\
& = &  \Corr{1}_1 - \sum_{n=1}^\infty (1-q)(1-t^{-1})
\frac{q^n - t^{-n}}{q-t^{-1}} z_1^{-n} (-1)^n \cdot \frac{(1-v)(1-Q)}{(1-uQ)}.
\een
Take $\Corr{\cdot}_1$:
\ben
&&  \Corr{1}_1 \cdot \Cors{1}_1 - \sum_{n=1}^\infty (1-q)(1-t^{-1})
\frac{q^n - t^{-n}}{q-t^{-1}} \Cors{z_1^{-n}}_1 \cdot (-1)^n \cdot \frac{(1-v)(1-Q)}{(1-uQ)} \\
& = & \Corr{1}_1^2 - \sum_{n=1}^\infty (1-q)(1-t^{-1})
\frac{q^n - t^{-n}}{q-t^{-1}} \cdot (-uQ)^{n-1} \cdot Q\frac{(1-u)(1-uvQ)}{1-uQ}\\
&&  \cdot (-1)^n \cdot \frac{(1-v)(1-Q)}{(1-uQ)} \\
& = & \biggl( 1 - Q\frac{(1-u)(1-v)}{1-uQ} \biggr)^2 \\
& + & Q\frac{(1-Q)(1-uvQ)}{(1-uQ)^2} \cdot (1-u)(1-v) \cdot
\frac{(1-q)(1-t^{-1})}{(1-uqQ)(1-ut^{-1}Q)}.
\een
So we get
\be \label{eqn:E1E1}
\begin{split}
\corr{\tilde{E}^1\tilde{E}^1}_{u,v}
= & \biggl(\frac{t^{-1}}{1-t^{-1}}  \biggr)^2 \cdot
\biggl[\biggl( 1- Q\frac{(1-u)(1-v)}{1-uQ} \biggr)^2
 \\
+ & Q\frac{(1-Q)(1-uvQ)}{(1-uQ)^2} \cdot (1-u)(1-v) \cdot
\frac{(1-q)(1-t^{-1})}{(1-uqQ)(1-ut^{-1}Q)} \biggr].
\end{split}
\ee

\subsection{Generalizations of \eqref{eqn:Arm-Leg}}

In order to get more vertex realizable operators we will
first generalize \eqref{eqn:Arm-Leg}.
By changing $q$ to $q^m$ and $t$ to $t^m$ in \eqref{eqn:Arm-Leg},
we have
\be \label{eqn:Arm-Leg-m}
\begin{split}
\sum_{i=1}^{\infty} q^{m\mu_i}t^{-mi} - \sum_{i=1}^{\infty} t^{-mi}
= & \frac{q^m-1}{t^m} \sum_{(i,j) \in \mu} t^{-m(i-1)}q^{m(j-1)} \\
= & \frac{q^m-1}{t^m} \sum_{s \in \mu} t^{-ml'(s)}q^{ma'(s)}.
\end{split}
\ee
Therefore,
we have
\be
\begin{split}
\sum_{s \in \mu} t^{-ml'(s)}q^{ma'(s)}
= & \frac{t^m}{q^m-1} (\sum_{i=1}^{\infty} q^{m\mu_i}t^{-mi}
- \sum_{i=1}^{\infty} t^{-mi}) \\
= & \frac{t^m}{q^m-1} (p_m(q^{\mu_1}t^{-1}, q^{\mu_2}t^{-2}, \dots)
- p_m(t^{-1}, t^{-2}, \dots)).
\end{split}
\ee
Recall the following relationship between Newton symmetric functions
and elementary symmetric functions:
\be \label{eqn:Newton-Elementary}
\sum_{n=1}^\infty \frac{(-1)^{n-1}}{n} p_n z^{n}
= \log \sum_{n=0}^\infty e_n z^n.
\ee
Write the right-hand side as
\be \label{eqn:alpha-lambda}
\log \sum_{n=0}^\infty e_n z^n = \sum_{\lambda \in \cP} \alpha_\lambda \cdot e_\lambda z^{|\lambda|}
\ee
for some coefficients $\alpha_\lambda$.
The first few terms are:
\be
e_1z+(e_2-\frac{1}{2}e_1^2)z^2+(e_3-e_1e_2+\frac{1}{3}e_1^3)z^3
+(e_4-e_1e_3-\frac{1}{2}e_2^2+e_2e_1^2-\frac{1}{4}e_1^4)z^4
+ \cdots.
\ee
In other words,
\begin{align}
\alpha_{(1)} & = 1, & \alpha_{(2)} & = 1, & \alpha_{(1,1)} & = - \half, \\
\alpha_{(3)} & = 1, & \alpha_{(2,1)} & = -1, & \alpha_{(1,1,1)} & = \frac{1}{3}.
\end{align}
One can also rewrite \eqref{eqn:Newton-Elementary} as
\be
\sum_{n=0}^\infty e_n z^n = \exp \sum_{n=1}^\infty \frac{(-1)^{n-1}}{n} p_n z^{n}.
\ee
This expresses the elementary symmetric functions in terms of Newton functions.
From this one sees that that $e_k(\{t^{-l'(s)}q^{a'(s)}\}_{s\in \mu})$
and $p_m(\{t^{-l'(s)}q^{a'(s)}\}_{s\in \mu})$
can be expressed as a polynomial in $e_r(q^{\mu_1}t^{-1}, q^{\mu_2}t^{-2}, \dots)$:
\ben
&& \sum_{k\geq 0} e_k(\{t^{-l'(s)}q^{a'(s)}\}_{s\in \mu}) z^k
= \exp \sum_{m=1}^\infty \frac{(-1)^{m-1}}{m} p_m(\{t^{-l'(s)}q^{a'(s)}\}_{s\in \mu}) z^m \\
& = & \exp \sum_{m=1}^\infty \frac{(-1)^{m-1}}{m}
\biggl(\frac{t^m}{q^m-1} (p_m(q^{\mu_1}t^{-1}, q^{\mu_2}t^{-2}, \dots)
- p_m(t^{-1}, t^{-2}, \dots)) \biggr) z^m \\
& = & \exp \sum_{n=0}^\infty \sum_{m\geq 1} \frac{(-1)^{m}}{m}
 p_m(q^{\mu_1}t^{-1}, q^{\mu_2}t^{-2}, \dots) (tq^nz)^m \\
 && \cdot \exp \sum_{n=0}^\infty \sum_{m\geq 1} \frac{(-1)^{m-1}}{m}
 p_m(t^{-1}, t^{-2}, \dots) (tq^nz)^m \\
 & = & \prod_{n\geq 0} \biggl(
 \sum_{r\geq 0} e_r(q^{\mu_1}t^{-1}, q^{\mu_2}t^{-2}, \dots) (tq^nz)^r
 \biggr)^{-1}
\cdot \prod_{n\geq 0}   \sum_{r\geq 0} e_r(t^{-1}, t^{-2}, \dots) (tq^nz)^r   \\
& = & \prod_{n\geq 0} \biggl(
\sum_{r\geq 0} e_r(q^{\mu_1}t^{-1}, q^{\mu_2}t^{-2}, \dots) (tq^nz)^r  \biggr)^{-1}
\cdot \prod_{n\geq 0} \prod_{j=1}^\infty (1+t^{1-j}q^nz).
\een
To compute the series expansions,
we need to generalize Euler's formulas \eqref{eqn:Euler1} and \eqref{eqn:Euler2} below:
\be \label{eqn:Euler2}
\prod_{n=0}^\infty \frac{1}{1- q^nz} = 1 + \sum_{n=1}^\infty \frac{z^n}{(1-q) \cdots (1-q^n)}.
\ee

\begin{prop}
Write
\be \label{eqn:Product}
\prod_{n \geq 0} \sum_{r\geq 0} a_r q^{nr} z^r
= \sum_{m \geq 0} b_m z^m,
\ee
where $a_0=1$, $\{a_r\}_{r\geq 1}$ are formal variables,
then each $b_m$ is a weighted homogeneous polynomial of $\{a_r\}_{r\geq 1}$
if we assign $\deg a_r = r$.
Furthermore,
the sequence $\{b_m\}_{m \geq 0}$ satisfies the following recursion relations:
\bea
&& b_0 = 1, \\
&& b_m = \frac{1}{1-q^m} \sum_{r=1}^{m} q^{m-r}a_r b_{m-r}.
\eea
\end{prop}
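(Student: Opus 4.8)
The plan is to reduce everything to a single functional equation for the product. Write $A(w) := \sum_{r \geq 0} a_r w^r$, so that $A(0) = a_0 = 1$, and set $P(z) := \prod_{n \geq 0} A(q^n z) = \sum_{m \geq 0} b_m z^m$. First I would check that $P(z)$ is a well-defined formal power series in $z$: the coefficient of $z^m$ receives a nonconstant contribution from only finitely many factors at once, but since the index $n$ ranges over all of $\bZ_{\geq 0}$ one must sum geometric series in $q$ (for instance the single-factor contribution $\sum_{n \geq 0} a_m q^{nm} = a_m/(1-q^m)$). Hence, provided $q$ is not a root of unity, each $b_m$ lies in $\bQ(q)[a_1, \dots, a_m]$, and in particular $b_0 = 1$ since the constant term of the product is the product of the constant terms $a_0 = 1$.

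The key step is the identity $P(z) = A(z) \cdot P(qz)$. Indeed, $P(qz) = \prod_{n \geq 0} A(q^{n+1} z) = \prod_{n \geq 1} A(q^n z)$, so restoring the omitted $n=0$ factor $A(z)$ recovers $P(z)$. I would then extract the coefficient of $z^m$ from both sides. On the left this is $b_m$; on the right, since $A(z) P(qz) = \big(\sum_r a_r z^r\big)\big(\sum_k b_k q^k z^k\big)$, it equals $\sum_{r=0}^{m} a_r q^{m-r} b_{m-r}$. Isolating the $r = 0$ summand, which is $a_0 q^m b_m = q^m b_m$, and moving it to the left gives $(1 - q^m) b_m = \sum_{r=1}^{m} q^{m-r} a_r b_{m-r}$, which is precisely the claimed recursion after dividing by $1 - q^m$.

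The polynomiality and weighted homogeneity then follow by strong induction on $m$ directly from this recursion, so I would treat them last. The base case $b_0 = 1$ has weighted degree $0$. For the inductive step, each term $a_r b_{m-r}$ with $1 \leq r \leq m$ is, by the inductive hypothesis applied to $b_{m-r}$, a polynomial in $a_1, \dots, a_m$ of weighted degree $r + (m - r) = m$; the prefactor $q^{m-r}/(1-q^m)$ is a scalar in $\bQ(q)$ not involving the $a$'s and therefore affects neither polynomiality nor the weighted degree. Summing over $r$ preserves weighted degree $m$, completing the induction.

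The only genuine subtlety—indeed the sole obstacle—is the well-definedness of the infinite product flagged in the first paragraph: the $b_m$ are not polynomials in the $a_r$ with constant scalar coefficients but rather involve the rational functions $1/(1-q^m)$ arising from summing over the factor index $n$. This is exactly why the recursion carries the factor $1/(1-q^m)$, and it forces the standing requirement that $1 - q^m \neq 0$ for all $m \geq 1$ (e.g. $q$ not a root of unity, or $|q| < 1$). Once this is granted, every step above is a routine formal manipulation.
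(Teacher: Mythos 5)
Your proof is correct, and its core step --- the functional equation $P(z) = A(z)\,P(qz)$, followed by extracting the coefficient of $z^m$ and isolating the $r=0$ term to produce $(1-q^m)b_m = \sum_{r=1}^{m} q^{m-r} a_r b_{m-r}$ --- is exactly the paper's derivation of the recursion. Where you diverge is the weighted homogeneity: the paper gets it in one line by substituting $z \mapsto \lambda z$ in \eqref{eqn:Product}, which amounts to replacing $a_r$ by $\lambda^r a_r$ and shows directly that $b_m$ rescales by $\lambda^m$; you instead deduce it by strong induction from the recursion. Both are valid. The scaling trick is slicker and logically independent of the recursion, while your induction has the modest advantage of simultaneously establishing that $b_m$ lies in $\bQ(q)[a_1,\dots,a_m]$ with denominators only of the form $1-q^j$ for $j \leq m$. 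Your opening paragraph on well-definedness of the infinite product --- the coefficient of $z^m$ collects contributions from infinitely many factor indices $n$, e.g.\ the single-factor contribution $\sum_{n\geq 0} a_m q^{nm} = a_m/(1-q^m)$, so one must require $q^m \neq 1$ for all $m \geq 1$ --- addresses a point the paper passes over in silence; it is worth making explicit, since it is precisely what legitimizes treating the product as a formal power series over $\bQ(q)$ and explains the provenance of the factor $1/(1-q^m)$ in the recursion.
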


\begin{proof}
To get the first statement,
change $z$ to $\lambda z$ on both sides of \eqref{eqn:Product} to get:
\ben
&& \prod_{n \geq 0} \sum_{r\geq 0} (\lambda^ra_r) q^{nr} z^r
= \sum_{m \geq 0} (\lambda^mb_m) z^m.
\een
This means when $\{a_r\}_{r\geq 1}$ is changed to $\{\lambda^ra_r\}_{r\geq 1}$,
$\{b_m\}_{m\geq 1}$ is changed to $\{\lambda^mb_m\}_{m \geq 1}$.
To get the second statement,
note
\ben
&&\sum_{m \geq 0} b_m z^m  = \prod_{n \geq 0} \sum_{r\geq 0} a_r q^{nr} z^r
= \sum_{r\geq 0} a_r q^{nr} z^r \cdot \prod_{n \geq 0} \sum_{r\geq 0} a_r q^{nr} (qz)^r \\
& = & \sum_{r\geq 0} a_r z^r \cdot \sum_{m \geq 0} q^mb_m z^m,
\een
and so
\be
b_m = \sum_{r=0}^m a_rq^{m-r}b_{m-r}.
\ee
\end{proof}

The following are the first few terms of $b_m$:
\ben
b_1 & = &\frac{1}{1-q} a_1, \\
b_2 & = & \frac{1}{1-q^2}a_2 + \frac{q}{(1-q)(1-q^2)} a_1^2, \\
b_3 & = & \frac{1}{1-q^3}a_3 + \frac{q+2q^2}{(1-q^2)(1-q^3)}a_2a_1
+ \frac{q^3}{(1-q)(1-q^2)(1-q^3)} a_1^3, \\
b_4 & = & \frac{1}{1-q^4}a_4 + \frac{q+q^2+2q^3}{(1-q^3)(1-q^4)}a_3a_1
+ \frac{q^2}{(1-q^2)(1-q^4)} a_2^2\\
& + & \frac{q^3+2q^4+3q^5}{(1-q^2)(1-q^3)(1-q^4)} a_2a_1^2
+ \frac{q^6}{(1-q)(1-q^2)(1-q^3)} a_1^4.
\een

\begin{prop}
Write
\be \label{eqn:Product2}
\prod_{n \geq 0} (\sum_{r\geq 0} a_r q^{nr} z^r)^{-1}
= \sum_{m \geq 0} c_m z^m,
\ee
where $a_0=1$, $\{a_r\}_{r\geq 1}$ are formal variables,
then each $c_m$ is a weighted homogeneous polynomial of $\{a_r\}_{r\geq 1}$
if we assign $\deg a_r = r$.
Furthermore,
the sequence $\{c_m\}_{m \geq 0}$ satisfies the following recursion relations:
\bea
&& c_0 = 1, \\
&& c_m = - \frac{1}{1-q^m} \sum_{r=1}^{m} a_r c_{m-r}.
\eea
\end{prop}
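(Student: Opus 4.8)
The plan is to mirror the proof of the preceding proposition, replacing the product by its reciprocal and extracting the recursion from a functional equation rather than from the factorization used there. Write $A(w) := \sum_{r \geq 0} a_r w^r$ (a formal power series with $a_0 = 1$, hence invertible in $\mathbb{C}(q)[[z]]$) and $C(z) := \sum_{m \geq 0} c_m z^m = \prod_{n \geq 0} A(q^n z)^{-1}$. First I would note that the infinite product is well defined in the $(z)$-adic topology: each factor $A(q^n z)^{-1}$ equals $1$ plus terms of positive $z$-degree, so only finitely many factors contribute to any fixed coefficient $c_m$, and the coefficients lie in $\mathbb{C}(q)$.

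For the homogeneity statement I would argue exactly as in the previous proposition, by substituting $z \mapsto \lambda z$ on both sides of \eqref{eqn:Product2}. The left-hand side becomes $\prod_{n \geq 0}\bigl(\sum_{r \geq 0} (\lambda^r a_r) q^{nr} z^r\bigr)^{-1}$ while the right-hand side becomes $\sum_{m \geq 0} (\lambda^m c_m) z^m$. This shows that replacing each $a_r$ by $\lambda^r a_r$ multiplies $c_m$ by $\lambda^m$, which is precisely weighted homogeneity of degree $m$ once we know $c_m$ is a polynomial; polynomiality will follow from the recursion established next.

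For the recursion the key step is to peel off the $n = 0$ factor: since
\be
\prod_{n \geq 1} A(q^n z)^{-1} = \prod_{m \geq 0} A\bigl(q^m (qz)\bigr)^{-1} = C(qz),
\ee
we obtain the functional equation $C(z) = A(z)^{-1} C(qz)$, i.e. $A(z)\,C(z) = C(qz)$. Comparing the coefficient of $z^m$ on both sides gives $\sum_{r=0}^m a_r c_{m-r} = q^m c_m$. Isolating the $r = 0$ term $a_0 c_m = c_m$ and rearranging yields $(1 - q^m) c_m = -\sum_{r=1}^m a_r c_{m-r}$, which is the asserted recursion. Reading this recursion inductively expresses $c_m$ as a polynomial in $a_1, \dots, a_m$ with coefficients in $\mathbb{C}(q)$, completing the polynomiality needed above.

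I do not expect a genuine obstacle here, since the computation runs parallel to the established proposition; the only points requiring a little care are the correct bookkeeping of the $r = 0$ term when isolating $c_m$ and the justification that the reciprocal and the infinite product are legitimate formal power series (which is immediate from $a_0 = 1$). As a consistency check, one could alternatively derive the same recursion by combining this proposition with the previous one via the relation $\bigl(\sum_m b_m z^m\bigr)\bigl(\sum_m c_m z^m\bigr) = 1$, but the functional-equation argument above is self-contained and shorter.
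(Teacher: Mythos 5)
Your proposal is correct and follows essentially the same route as the paper: the weighted homogeneity comes from the substitution $z \mapsto \lambda z$, and the recursion from peeling off the $n=0$ factor to get the functional equation $A(z)\,C(z) = C(qz)$ and comparing coefficients of $z^m$. Your added remarks on the well-definedness of the infinite product and on polynomiality following inductively from the recursion are sound refinements that the paper leaves implicit.
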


\begin{proof}
The first statement can be proved by the same argument as above.
To get the second statement,
note
\ben
&&\sum_{m \geq 0} c_m z^m  = \prod_{n \geq 0} (\sum_{r\geq 0} a_r q^{nr} z^r)^{-1}
= (\sum_{r\geq 0} a_r q^{nr} z^r)^{-1} \cdot \prod_{n \geq 0} (\sum_{r\geq 0} a_r q^{nr} (qz)^r)^{-1} \\
& = & (\sum_{r\geq 0} a_r z^r)^{-1} \cdot \sum_{m \geq 0} q^mc_m z^m,
\een
and so
\be
\sum_{m \geq 0} q^mc_m z^m
= \sum_{r\geq 0} a_r z^r \cdot \sum_{m \geq 0} c_m z^m,
\ee
therefore,
\be
q^m c_m = \sum_{r=0}^m a_r c_{m-r}.
\ee
\end{proof}

The following are the first few terms of $c_m$:
\ben
c_1 & = & -\frac{1}{1-q} a_1, \\
c_2 & = & -\frac{1}{1-q^2}a_2 - \frac{1}{(1-q)(1-q^2)} a_1^2, \\
c_3 & = & -\frac{1}{1-q^3}a_3 - \frac{q+2}{(1-q^2)(1-q^3)}a_2a_1
- \frac{1}{(1-q)(1-q^2)(1-q^3)} a_1^3, \\
c_4 & = & - \frac{1}{1-q^4}a_4 - \frac{q^2+q+2}{(1-q^3)(1-q^4)}a_3a_1
- \frac{1}{(1-q^2)(1-q^4)} a_2^2\\
& - & \frac{q^2+2q}{(1-q)(1-q^3)(1-q^4)} a_2a_1^2
- \frac{q^3}{(1-q)(1-q^2)(1-q^3)} a_1^4.
\een
We will write
\begin{align}
b_n & = \sum_{|\mu|=n} \beta_\mu a_\mu, &
c_n & = \sum_{|\mu|=n} \gamma_\mu a_\mu,
\end{align}
where for a partition $\mu=(\mu_1, \dots, \mu_l)$, $a_\mu:=a_{\mu_1} \cdots a_{\mu_l}$,
$\beta_\mu$ and $\gamma_\mu$ are rational functions in $q$,
with poles only at roots of unity.

With the above preparations,
we now know how to compute the series expansion of the right-hand side of the second of
the following equations:
\ben
&& \sum_{k\geq 0} e_k(\{t^{-l'(s)}q^{a'(s)}\}_{s\in \mu}) z^k
= \exp \sum_{m=1}^\infty \frac{(-1)^{m-1}}{m} p_m(\{t^{-l'(s)}q^{a'(s)}\}_{s\in \mu}) z^m \\
& = & \prod_{n\geq 0} \biggl(
 \sum_{r\geq 0} e_r(q^{\mu_1}t^{-1}, q^{\mu_2}t^{-2}, \dots) (tq^nz)^r
 \biggr)^{-1}
\cdot \prod_{n\geq 0}   \sum_{r\geq 0} e_r(t^{-1}, t^{-2}, \dots) (tq^nz)^r.
\een
We formulate the result in the following:

\begin{prop}
For a partition $\lambda$,
the following identity holds:
\be
e_k(\{t^{-l'(s)}q^{a'(s)}\}_{s\in \lambda})
= \sum_{\substack{\mu, \nu\in \cP\\|\mu|+|\nu|=k}}
\gamma_\mu \cdot e_\mu(\{q^{\lambda_i}t^{-i+1}\}_{i \geq 1})
\cdot \beta_\nu \cdot e_\nu(\{t^{-i+1}\}_{i\geq 1}).
\ee
\end{prop}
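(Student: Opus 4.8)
The plan is to obtain the claimed identity as the coefficient of $z^k$ in the generating-series identity displayed immediately above the statement, after expanding its two infinite products by means of the two preceding propositions. First I would record that identity in the form
\[
\sum_{k\geq 0} e_k(\{t^{-l'(s)}q^{a'(s)}\}_{s\in \lambda}) z^k
= \prod_{n\geq 0}\Bigl(\sum_{r\geq 0} e_r(q^{\lambda_1}t^{-1}, q^{\lambda_2}t^{-2}, \dots)(tq^nz)^r\Bigr)^{-1}
\cdot \prod_{n\geq 0}\sum_{r\geq 0} e_r(t^{-1}, t^{-2}, \dots)(tq^nz)^r,
\]
so that each factor has exactly the shape $\prod_{n\geq 0}\bigl(\sum_r a_r q^{nr}w^r\bigr)^{\pm 1}$ treated in \eqref{eqn:Product} and \eqref{eqn:Product2}, with $w = tz$.

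Next I would apply those two propositions factor by factor. For the inverted product I set $a_r = e_r(q^{\lambda_1}t^{-1}, q^{\lambda_2}t^{-2}, \dots)$, so that \eqref{eqn:Product2} gives $\sum_{m\geq 0} c_m (tz)^m = \sum_{m\geq 0}\bigl(\sum_{|\mu|=m}\gamma_\mu a_\mu\bigr)(tz)^m$; for the second product I set $a_r = e_r(t^{-1}, t^{-2}, \dots)$, so that \eqref{eqn:Product} gives $\sum_{m\geq 0} b_m (tz)^m = \sum_{m\geq 0}\bigl(\sum_{|\nu|=m}\beta_\nu a_\nu\bigr)(tz)^m$. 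Multiplying these two power series in $z$ and extracting the coefficient of $z^k$ will produce a double sum over $\mu,\nu$ with $|\mu|+|\nu|=k$, with the prefactors $\gamma_\mu$ and $\beta_\nu$ and the monomials $t^{|\mu|}a_\mu$ and $t^{|\nu|}a_\nu$.

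The one point that genuinely requires care — and the only nontrivial step — is that the spurious powers of $t$ introduced by the shift $z\mapsto tz$ must recombine the arguments from $\{q^{\lambda_i}t^{-i}\}$ into exactly $\{q^{\lambda_i}t^{-i+1}\}$. Here I would invoke the homogeneity of $e_r$ of degree $r$: since $t^r e_r(q^{\lambda_1}t^{-1}, q^{\lambda_2}t^{-2}, \dots) = e_r(q^{\lambda_1}, q^{\lambda_2}t^{-1}, \dots) = e_r(\{q^{\lambda_i}t^{-i+1}\}_{i\geq 1})$, it follows that $t^{|\mu|}a_\mu = e_\mu(\{q^{\lambda_i}t^{-i+1}\}_{i\geq 1})$, and likewise $t^{|\nu|}a_\nu = e_\nu(\{t^{-i+1}\}_{i\geq 1})$. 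Once this bookkeeping is in place the coefficient of $z^k$ reads precisely
\[
\sum_{\substack{\mu,\nu\in\cP\\ |\mu|+|\nu|=k}} \gamma_\mu\, e_\mu(\{q^{\lambda_i}t^{-i+1}\}_{i\geq 1})\cdot \beta_\nu\, e_\nu(\{t^{-i+1}\}_{i\geq 1}),
\]
which is the assertion. I do not expect any real obstacle: the two propositions have already absorbed all the combinatorial content of expanding the infinite products into the partition sums defining $\beta_\mu$ and $\gamma_\mu$, and what remains is the homogeneity rescaling just described.
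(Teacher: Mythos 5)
Your proof is correct and follows essentially the same route as the paper: the paper's own terse proof consists of exactly the two expansions you derive, namely $\prod_{n\geq 0}\bigl(\sum_{r\geq 0} e_r(\{q^{\lambda_i}t^{-i+1}\}_{i\geq 1})q^{rn}z^r\bigr)^{-1}=\sum_{\mu}\gamma_\mu e_\mu(\{q^{\lambda_i}t^{-i+1}\}_{i\geq 1})z^{|\mu|}$ and its $\beta_\nu$ counterpart, applied to the generating identity and multiplied out. The only difference is that the paper silently absorbs the shift $z\mapsto tz$ via the homogeneity $t^r e_r(\{q^{\lambda_i}t^{-i}\}_{i\geq 1})=e_r(\{q^{\lambda_i}t^{-i+1}\}_{i\geq 1})$, whereas you make this bookkeeping step explicit — a clarification, not a divergence.
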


\begin{proof}
This is an easy consequence of the following expansions:
\ben
\prod_{n\geq 0} \biggl(
\sum_{r\geq 0} e_r(\{q^{\lambda_i}t^{-i+1}\}_{i \geq 1})q^{rn}z^r  \biggr)^{-1}
= \sum_{\mu\in \cP} \gamma_\mu \cdot e_\mu(\{q^{\lambda_i}t^{-i+1}\}_{i \geq 1})
\cdot z^{|\mu|},
\een

\ben
&& \prod_{n\geq 0}   \sum_{r\geq 0} e_r(\{t^{-i+1}\}_{i\geq 1}) q^{rn}z^r
= \sum_{\nu\in\cP} \beta_\nu \cdot e_\nu(\{t^{-i+1}\}_{i\geq 1}) \cdot z^{|\nu|}.
\een
\end{proof}

For example,
\ben
&& e_1(\{q^{\lambda_i}t^{-i+1}\}_{i \geq 1})q^{rn}
= \frac{1}{1-q} e_1(\{t^{-i+1}\}_{i\geq 1}) - \frac{1}{1-q} e_1(\{q^{\lambda_i}t^{-i+1}\}_{i\geq 1}), \\
&& e_2(\{q^{\lambda_i}t^{-i+1}\}_{i \geq 1})q^{rn}
= \frac{1}{1-q^2} e_2(\{t^{-i+1}\}_{i\geq 1})
+  \frac{q}{(1-q)(1-q^2)} e_1^2(\{t^{-i+1}\}_{i\geq 1}) \\
&& - \frac{1}{(1-q)^2} e_1(\{t^{-i+1}\}_{i\geq 1}) e_1(\{q^{\lambda_i}t^{-i+1}\}_{i\geq 1}) \\
&& - \frac{1}{1-q^2} e_2(\{q^{\lambda_i}t^{-i+1}\}_{i\geq 1})
-  \frac{q}{(1-q)(1-q^2)} e_1^2(\{q^{\lambda_i}t^{-i+1}\}_{i\geq 1}).
\een

One can also get:
\ben
&& \sum_{m=1}^\infty \frac{(-1)^{m}}{m} p_m(\{t^{-l'(s)}q^{a'(s)}\}_{s\in \mu}) z^m  \\
& = & \sum_{n\geq 0} \log \sum_{r\geq 0} e_r(q^{\mu_1}t^{-1}, q^{\mu_2}t^{-2}, \dots) (tq^nz)^r
- \sum_{n\geq 0} \log \sum_{r\geq 0} e_r(t^{-1}, t^{-2}, \dots) (tq^nz)^r \\
& = & \sum_{n \geq 0} \sum_{\lambda \in \cP} \alpha_\lambda
\biggl( e_\lambda(q^{\mu_1}t^{-1}, q^{\mu_2}t^{-2}, \dots)
- e_\lambda(t^{-1}, t^{-2}, \dots) \biggr) \cdot (tq^nz)^{|\lambda|} \\
& = & \sum_{\lambda \in \cP} \alpha_\lambda
\biggl( e_\lambda(q^{\mu_1}t^{-1}, q^{\mu_2}t^{-2}, \dots)
- e_\lambda(t^{-1}, t^{-2}, \dots) \biggr) \cdot \frac{(tz)^{|\lambda|}}{1-q^{|\lambda|}}.
\een
Note we have
\ben
&& \sum_{m \geq 0} \sum_{|\lambda|=m} \alpha_\lambda \cdot
e_\lambda(t^{-1}, t^{-2}, \dots) z^m
= \log \sum_{m=0}^\infty e_m(t^{-1}, t^{-2}, \dots) z^m \\
& = & \log \prod_{n \geq 1} (1+t^{-n} z)  = \sum_{n \geq 1}\log  (1+t^{-n} z) \\
& = & \sum_{m=1}^\infty \frac{(-1)^{m-1}}{m} \sum_{n=1}^\infty t^{-mn} z^m
= \sum_{m=1}^\infty \frac{(-1)^{m-1}}{m} \frac{t^{-m}}{1-t^{-m}} z^m.
\een
Therefore,
we get the following generalization of \eqref{eqn:Arm-Leg}:

\begin{prop}
For a partition $\mu$ and $m \geq 1$,
the following identity holds:
\be \label{eqn:pm-in-e}
\begin{split}
&   p_m(\{t^{-l'(s)}q^{a'(s)}\}_{s\in \mu})  \\
= & \frac{(-1)^{m}mt^{m}}{1-q^{m}} \cdot \sum_{|\lambda|=m} \alpha_\lambda \cdot
e_\lambda(q^{\mu_1}t^{-1}, q^{\mu_2}t^{-2}, \dots)
+ \frac{1}{(1-q^m)(1-t^{-m})},
\end{split}
\ee
where $\alpha_\lambda$ is defined in \eqref{eqn:alpha-lambda}.
\end{prop}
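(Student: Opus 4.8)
The plan is to reduce the whole statement to the single relation \eqref{eqn:Arm-Leg-m}, together with Newton's identity, and then to finish by pure sign-and-denominator bookkeeping. First I would observe that $p_m(\{t^{-l'(s)}q^{a'(s)}\}_{s\in\mu})$ is by definition the $m$-th power sum $\sum_{s\in\mu}(t^{-l'(s)}q^{a'(s)})^m=\sum_{s\in\mu}t^{-ml'(s)}q^{ma'(s)}$. Reading \eqref{eqn:Arm-Leg-m} in the rearranged form
\be
p_m(\{t^{-l'(s)}q^{a'(s)}\}_{s\in\mu})=\frac{t^m}{q^m-1}\bigl(p_m(q^{\mu_1}t^{-1},q^{\mu_2}t^{-2},\dots)-p_m(t^{-1},t^{-2},\dots)\bigr)
\ee
already isolates exactly the two pieces that need to be evaluated: a power sum in the alphabet $\{q^{\mu_i}t^{-i}\}_{i\geq1}$ and a power sum in the alphabet $\{t^{-i}\}_{i\geq1}$.

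For the first piece I would invoke Newton's identity in the packaged form \eqref{eqn:Newton-Elementary}. Extracting the coefficient of $z^m$ from $\sum_{n\geq1}\frac{(-1)^{n-1}}{n}p_nz^n=\sum_{\lambda\in\cP}\alpha_\lambda e_\lambda z^{|\lambda|}$ gives the identity of symmetric functions $\frac{(-1)^{m-1}}{m}p_m=\sum_{|\lambda|=m}\alpha_\lambda e_\lambda$, i.e. $p_m=(-1)^{m-1}m\sum_{|\lambda|=m}\alpha_\lambda e_\lambda$; since this is an identity in $\Lambda$ it survives specialization of all variables to $\{q^{\mu_i}t^{-i}\}_{i\geq1}$, producing precisely the sum over $\lambda$ appearing in the claim. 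For the second piece I would just sum the geometric series $p_m(t^{-1},t^{-2},\dots)=\sum_{i\geq1}t^{-mi}=\frac{t^{-m}}{1-t^{-m}}=\frac{1}{t^m-1}$. (Equivalently, one may read off the value $\sum_{|\lambda|=m}\alpha_\lambda e_\lambda(t^{-1},t^{-2},\dots)=\frac{(-1)^{m-1}}{m}\frac{t^{-m}}{1-t^{-m}}$, which is exactly the generating-series computation carried out just above the statement.)

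Substituting both evaluations into the displayed relation gives
\be
p_m(\{t^{-l'(s)}q^{a'(s)}\}_{s\in\mu})=\frac{t^m}{q^m-1}\Bigl[(-1)^{m-1}m\sum_{|\lambda|=m}\alpha_\lambda e_\lambda(q^{\mu_1}t^{-1},\dots)-\frac{1}{t^m-1}\Bigr],
\ee
after which only elementary simplification remains. Rewriting $\frac{(-1)^{m-1}}{q^m-1}=\frac{(-1)^m}{1-q^m}$ turns the leading term into $\frac{(-1)^mmt^m}{1-q^m}\sum_{|\lambda|=m}\alpha_\lambda e_\lambda(q^{\mu_1}t^{-1},\dots)$, matching the first summand of \eqref{eqn:pm-in-e}. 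The remaining term $-\frac{t^m}{(q^m-1)(t^m-1)}$ becomes $\frac{1}{(1-q^m)(1-t^{-m})}$ once one writes $q^m-1=-(1-q^m)$ and $t^m-1=t^m(1-t^{-m})$, which is the second summand. I expect the only genuine place to be careful is this final sign reconciliation: the parities $(-1)^{m-1}$ versus $(-1)^m$ and the two denominator sign flips must be tracked exactly, as a single misplaced sign would corrupt the constant term; the rest of the argument is formal.
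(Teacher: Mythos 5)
Your proposal is correct and is essentially the paper's own argument: the paper also starts from the rearranged form of \eqref{eqn:Arm-Leg-m}, uses the expansion $\log\sum_n e_n z^n=\sum_\lambda \alpha_\lambda e_\lambda z^{|\lambda|}$ from \eqref{eqn:alpha-lambda} (your Newton-identity step is its coefficient-of-$z^m$ form), and evaluates $\sum_{|\lambda|=m}\alpha_\lambda e_\lambda(t^{-1},t^{-2},\dots)$ via the same geometric summation. The only difference is presentational — you extract the degree-$m$ coefficient directly, while the paper carries out the computation at the level of generating series in $z$ — and your sign bookkeeping checks out, reproducing both the factor $\frac{(-1)^m m t^m}{1-q^m}$ and the constant term $\frac{1}{(1-q^m)(1-t^{-m})}$.
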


For example,
\be \label{eqn:p2-in-e}
\begin{split}
& p_2(\{t^{-l'(s)}q^{a'(s)}\}_{s\in \mu})  \\
= & \frac{2t^{2}}{1-q^{2}} \cdot \biggl(
e_2(\{q^{\mu_i}t^{-i}\}_{i\geq 1})
- \half e_1^2(\{q^{\mu_i}t^{-i}\}_{i\geq 1}) \biggr)
+ \frac{1}{(1-q^2)(1-t^{-2})}.
\end{split}
\ee

In the above
we have related $e_m(\{t^{-l'(s)}q^{a'(s)}\}_{s\in \mu})$
and $p_m(\{t^{-l'(s)}q^{a'(s)}\}_{s\in \mu})$  to the eigenvalues
of the operators $\tilde{E}^r$.
We can also relate them to the eigenvalues of the operators $E^r$.
Recall the generating series of the eigenvalues $e^r(\mu)$
of  the operator  $E^r$ is given by \eqref{eqn:Eigen-E}:
\be
 \sum_{r=0}^\infty e^r(\mu) z^r
 = \prod_{j=1}^\infty  \frac{1+q^{\mu_j}t^{-j}z}{1+t^{-j}z}.
\ee
From the above calculation we have:
\ben
&& \sum_{k\geq 0} e_k(\{t^{-l'(s)}q^{a'(s)}\}_{s\in \mu}) z^k
= \exp \sum_{m=1}^\infty \frac{(-1)^{m-1}}{m} p_m(\{t^{-l'(s)}q^{a'(s)}\}_{s\in \mu}) z^m \\
& = & \prod_{n\geq 0} \biggl(
\sum_{r\geq 0} e_r(q^{\mu_1}t^{-1}, q^{\mu_2}t^{-2}, \dots) (tq^nz)^r  \biggr)^{-1}
\cdot \prod_{n\geq 0} \prod_{j=1}^\infty (1+t^{1-j}q^nz) \\
& = & \prod_{n\geq 0} \biggl[
\prod_{j \geq 1} [(1+q^{\mu_j} t^{-j} \cdot  (tq^nz))^{-1}(1+t^{-j}\cdot (tq^nz)) ]
\biggr] \\
& = & \prod_{n \geq 0} (\sum_{r \geq 0} e^r(\mu) (tq^nz)^r)^{-1}
\cdot \prod_{n\geq 0} \prod_{j=1}^\infty (1+t^{1-j}q^nz).
\een
It is then natural to consider:
\be
\begin{split}
& \biggl( \sum_{k\geq 0} e_k(\{t^{-l'(s)}q^{a'(s)}\}_{s\in \mu}) z^k\biggr)^{-1}
=  \prod_{n\geq 0}
\prod_{j \geq 1} \frac{1+q^{\mu_j} t^{-j} \cdot  (tq^nz)}{1+t^{-j}\cdot (tq^nz)} \\
& = \prod_{n\geq 0}
 \sum_{r\geq 0} e_r(q^{\mu_1}t^{-1}, q^{\mu_2}t^{-2}, \dots) (tq^nz)^r
 \cdot \prod_{m, n\geq 0} \frac{1}{1+t^{-m} q^n z}.
\end{split}
\ee
Recall the following relationship between elementary symmetric functions $\{e_k\}$
and complete symmetric functions $\{h_k\}$:
\be
(\sum_{k\geq 0} e_k z^k)^{-1} = \sum_{k\geq 0} (-1)^k h_k z^k.
\ee

\begin{prop}
For a partition $\lambda$,
the following identity holds:
\be
h_k(\{t^{-l'(s)}q^{a'(s)}\}_{s\in \lambda})
= (-1)^k\sum_{\substack{\mu, \nu\in \cP\\|\mu|+|\nu|=k}}
\beta_\mu e_\mu(\{q^{\lambda_i}t^{-i+1}\}_{i \geq 1})
\gamma_\nu  e_\nu(\{t^{-i+1}\}_{i\geq 1}).
\ee
\end{prop}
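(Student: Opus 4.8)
The plan is to deduce this from the companion identity for $e_k(\{t^{-l'(s)}q^{a'(s)}\}_{s\in\lambda})$ by passing to reciprocal generating series. The starting point is the duality $(\sum_{k\geq 0}e_kz^k)^{-1}=\sum_{k\geq 0}(-1)^kh_kz^k$ recalled just above, applied to the specialized alphabet $\{t^{-l'(s)}q^{a'(s)}\}_{s\in\lambda}$, so that
\be
\sum_{k\geq 0}(-1)^k h_k(\{t^{-l'(s)}q^{a'(s)}\}_{s\in\lambda}) z^k
= \biggl(\sum_{k\geq 0} e_k(\{t^{-l'(s)}q^{a'(s)}\}_{s\in\lambda}) z^k\biggr)^{-1}.
\ee
I would then feed in the factorization of this reciprocal already derived in the excerpt,
\be
\biggl(\sum_{k\geq 0} e_k(\{t^{-l'(s)}q^{a'(s)}\}_{s\in\lambda}) z^k\biggr)^{-1}
= \prod_{n\geq 0}\biggl(\sum_{r\geq 0} e_r(q^{\lambda_1}t^{-1}, q^{\lambda_2}t^{-2}, \dots)(tq^nz)^r\biggr)
\cdot \prod_{n\geq 0}\prod_{j\geq 1}\frac{1}{1+t^{1-j}q^nz}.
\ee

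Next I would expand the two infinite products separately using the two expansion Propositions. After the harmless rescaling $(tq^nz)^r=t^rq^{nr}z^r$, the first product has the shape $\prod_{n\geq 0}\sum_{r\geq 0}a_rq^{nr}z^r$ with $a_r=t^re_r(q^{\lambda_1}t^{-1}, q^{\lambda_2}t^{-2}, \dots)=e_r(\{q^{\lambda_i}t^{-i+1}\}_{i\geq 1})$, so the Proposition giving $b_m=\sum_{|\mu|=m}\beta_\mu a_\mu$ produces the coefficient $\sum_{|\mu|=m}\beta_\mu e_\mu(\{q^{\lambda_i}t^{-i+1}\}_{i\geq 1})$. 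The second product is $\prod_{n\geq 0}\prod_{j\geq 1}(1+t^{1-j}q^nz)^{-1}=\prod_{n\geq 0}\bigl(\sum_{r\geq 0}e_r(\{t^{-i+1}\}_{i\geq 1})q^{nr}z^r\bigr)^{-1}$, so the Proposition giving $c_m=\sum_{|\nu|=m}\gamma_\nu a_\nu$ (now with $a_r=e_r(\{t^{-i+1}\}_{i\geq 1})$) produces the coefficient $\sum_{|\nu|=m}\gamma_\nu e_\nu(\{t^{-i+1}\}_{i\geq 1})$.

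Finally I would form the Cauchy product of these two series, read off the coefficient of $z^k$, and multiply by $(-1)^k$; this reproduces the asserted formula verbatim. The only genuine content, as opposed to bookkeeping, is to notice that the roles of $\beta$ and $\gamma$ are exchanged relative to the $e_k$ Proposition: here $\beta_\mu$ attaches to $e_\mu(\{q^{\lambda_i}t^{-i+1}\})$ and $\gamma_\nu$ to $e_\nu(\{t^{-i+1}\})$, precisely because taking the reciprocal interchanges which of the two products is inverted (in the $e_k$ case the $q^\lambda$-product was inverted, giving $\gamma$, whereas here it is the pure-$t$ product that is inverted). Keeping this swap straight, together with the global sign $(-1)^k$ from the $e$--$h$ duality, is the main thing to watch; the remainder is routine coefficient extraction requiring no new ideas, exactly as in the proof of the $e_k$ Proposition.
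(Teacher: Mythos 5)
Your proof is correct and is essentially the paper's own argument: the paper proves this proposition by citing exactly the two product expansions you invoke (with $\beta_\mu$ now attached to the $q^{\lambda}$-alphabet $\{q^{\lambda_i}t^{-i+1}\}_{i\geq 1}$ and $\gamma_\nu$ to the pure-$t$ alphabet $\{t^{-i+1}\}_{i\geq 1}$), applied to the reciprocal factorization $\bigl(\sum_{k\geq 0} e_k(\{t^{-l'(s)}q^{a'(s)}\}_{s\in\lambda})z^k\bigr)^{-1}$ derived just before the statement, together with the duality $\bigl(\sum_{k\geq 0}e_kz^k\bigr)^{-1}=\sum_{k\geq 0}(-1)^kh_kz^k$. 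Your observation that $\beta$ and $\gamma$ trade places relative to the $e_k$ proposition, because taking the reciprocal interchanges which infinite product is inverted, is precisely the bookkeeping the paper performs implicitly.
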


\begin{proof}
This is an easy consequence of the following expansions:
\ben
\prod_{n\geq 0}
\sum_{r\geq 0} e_r(\{q^{\lambda_i}t^{-i+1}\}_{i \geq 1})q^{rn}z^r
= \sum_{\mu\in \cP} \beta_\mu \cdot e_\mu(\{q^{\lambda_i}t^{-i+1}\}_{i \geq 1})
\cdot z^{|\mu|},
\een

\ben
&& \prod_{n\geq 0}  \biggl( \sum_{r\geq 0} e_r(\{t^{-i+1}\}_{i\geq 1}) q^{rn}z^r \biggr)^{-1}
= \sum_{\nu\in\cP} \gamma_\nu \cdot e_\nu(\{t^{-i+1}\}_{i\geq 1}) \cdot z^{|\nu|}.
\een
\end{proof}

There is another way to compute
$\prod_{n\geq 0} \prod_{j=1}^\infty (1+t^{1-j}q^nz)$.
Consider the expansion:
\ben
&&  \prod_{m,n\geq 0} (1+q_1^{m}q_2^{n}z) = \sum_{n\geq 0} c_n z^n.
\een

\ben
&& \sum_{n\geq 0} c_n z^n = \prod_{m,n \geq 0} (1+q_1^{m}q_2^{n}z) \\
& = &  \frac{1}{1+z}\prod_{m\geq 0} (1+q_1^{m}z)
\cdot \prod_{n\geq 0} (1+q_2^{n}z)
\cdot \prod_{m, n \geq 0} (1+q_1^{m}q_2^{n}(q_1q_2 z)) \\
& = & \sum_{l \geq 0} e_l(\{q_1^{n+1}, q_2^n\}_{n \geq 0}) z^l
\cdot \sum_{m \geq 0} c_m \cdot (q_1q_2)^m z^m \\
& = & \sum_{l, m \geq 0} e_l (\{q_1^{n+1}, q_2^n\}_{n \geq 0}) c_m \cdot  (q_1q_2)^m z^{l+m}.
\een
It follows that
\be
c_n =\sum_{l+ m=n} e_l (\{q_1^{n+1}, q_2^n\}_{n \geq 0}) (q_1q_2)^m c_m
\ee
and therefore,
\be
c_n = \frac{1}{1-(q_1q_2)^n} \sum_{l=0}^{m-1} (q_1q_2)^{m-l} e_l(\{q_1^n, q_2^n\}).
\ee

\subsection{More examples of vertex realizable operators}

Define three operator $\Lambda^m(q,t), \Sigma^m(q,t), \Psi^m(q,t): \Lambda_{q,t} \to \Lambda_{q, t}$
such that
\bea
&& \Lambda^m(q,t) P_\mu(x; q,t) =  e_m(\{t^{-l'(s)}q^{a'(s)}\}_{s\in \mu})
P_\mu(x; q,t), \\
&& \Sigma^m(q,t) P_\mu(x; q,t) =  h_m(\{t^{-l'(s)}q^{a'(s)}\}_{s\in \mu})
P_\mu(x; q,t), \\
&& \Psi^m(q,t) P_\mu(x; q,t) =  p_m(\{t^{-l'(s)}q^{a'(s)}\}_{s\in \mu})
P_\mu(x; q,t).
\eea
By the discussion in last subsection,
one easily get the following:

\begin{prop}
The operators $\Lambda^m$, $\Sigma^m$ and $\Psi^m$
can all be expressed as weighted homogeneous polynomials in
operators $\tilde{E}^r$ which are of weight $r$,
hence they are all vertex realizable.
\end{prop}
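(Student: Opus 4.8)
The plan is to exploit the fact that $\Lambda^m$, $\Sigma^m$, $\Psi^m$ and all the $\tilde{E}^r$ are simultaneously diagonalized by the Macdonald basis $\{P_\mu(x;q,t)\}_{\mu\in\cP}$, so that polynomial identities among their eigenvalues lift verbatim to operator identities. The combinatorial input has already been assembled in the three propositions immediately preceding, so what remains is to package it and feed it into the structural results on vertex realizable operators.

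First I would record the eigenvalues. By the Proposition establishing the eigenvalues of $\tilde{E}^r$, that operator acts on $P_\mu$ by the scalar $e_r(\{q^{\mu_i}t^{-i}\}_{i\ge 1})$, while $\Lambda^m$, $\Sigma^m$, $\Psi^m$ act by $e_m$, $h_m$, $p_m$ evaluated on the content alphabet $\{t^{-l'(s)}q^{a'(s)}\}_{s\in\mu}$. The three preceding propositions express each of these content symmetric functions as a $\bQ(q,t)$-combination of products $e_\lambda(\{q^{\mu_i}t^{-i}\})=\prod_j e_{\lambda_j}(\{q^{\mu_i}t^{-i}\})$ of the very same quantities. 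Equivalently one may use the rescaled alphabet $\{q^{\mu_i}t^{-i+1}\}=t\cdot\{q^{\mu_i}t^{-i}\}$, since homogeneity of $e_\lambda$ gives $e_\lambda(\{q^{\mu_i}t^{-i+1}\})=t^{|\lambda|}e_\lambda(\{q^{\mu_i}t^{-i}\})$; the factor $t^{|\lambda|}$, together with the $\mu$-independent scalars $e_\nu(\{t^{-i+1}\})$ appearing in the expansions of $e_k$ and $h_k$, may simply be absorbed into the coefficients.

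Next I would lift these scalar identities to operators. Since the $P_\mu$ form a basis of $\Lambda_{q,t}$ and every operator here is diagonal in it, the $\tilde{E}^r$ mutually commute, and $\prod_j e_{\lambda_j}(\{q^{\mu_i}t^{-i}\})$ is precisely the eigenvalue of the product $\tilde{E}^{\lambda_1}\cdots\tilde{E}^{\lambda_{l(\lambda)}}$ on $P_\mu$; hence an eigenvalue identity valid for every $\mu$ is equivalent to the corresponding operator identity. For the power sum this is sharpest: \eqref{eqn:pm-in-e} lifts directly to
\[
\Psi^m=\frac{(-1)^m m\,t^{m}}{1-q^{m}}\sum_{|\lambda|=m}\alpha_\lambda\,\tilde{E}^{\lambda_1}\cdots\tilde{E}^{\lambda_{l(\lambda)}}+\frac{1}{(1-q^m)(1-t^{-m})}\,\id ,
\]
and the expansions for $e_k$, $h_k$ render $\Lambda^m$, $\Sigma^m$ as analogous $\bQ(q,t)$-polynomials in the $\tilde{E}^r$. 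Assigning weight $r$ to $\tilde{E}^r$, each monomial $\tilde{E}^{\lambda_1}\cdots\tilde{E}^{\lambda_{l(\lambda)}}$ carries weight $|\lambda|$, so the non-scalar part of $\Psi^m$ is genuinely homogeneous of weight $m$.

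Finally I would invoke the structural results: by \eqref{eqn:tildeE-vertex} each $\tilde{E}^r$ is vertex realizable of weight $r$; by Proposition \ref{prop:Composition} every product $\tilde{E}^{\lambda_1}\cdots\tilde{E}^{\lambda_{l(\lambda)}}$ is vertex realizable of weight $|\lambda|$, with an explicit kernel; and by the closing clause of the Definition any linear combination of vertex realizable operators of possibly different weights is again vertex realizable (the constant terms being scalar multiples of the identity, which one includes as the weight-zero case $\tilde{E}^0=\id$). Applied to the polynomial expressions above, this yields the vertex realizability of $\Lambda^m$, $\Sigma^m$ and $\Psi^m$. The one point needing care, which I would flag as the main obstacle to the cleanest phrasing, is the homogeneity bookkeeping: whereas $\Psi^m$ is weighted homogeneous of weight $m$ up to the scalar term, the expansions of $e_k$ and $h_k$ mix monomials $\tilde{E}^{\lambda_1}\cdots\tilde{E}^{\lambda_{l(\lambda)}}$ of all weights $|\lambda|\le k$ --- because of the scalar factors $e_\nu(\{t^{-i+1}\})$ --- so $\Lambda^m$ and $\Sigma^m$ are in general inhomogeneous polynomials in the $\tilde{E}^r$. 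Homogeneity is recovered only after grouping by weight, but vertex realizability holds termwise and hence for the whole via the linear-combination clause.
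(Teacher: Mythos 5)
Your proposal is correct and takes essentially the same approach as the paper, whose entire proof is the remark ``by the discussion in last subsection'': you supply exactly the intended steps, namely lifting the eigenvalue expansions of $e_m$, $h_m$, $p_m$ on the content alphabet $\{t^{-l'(s)}q^{a'(s)}\}_{s\in\mu}$ (the three preceding propositions) to operator identities via simultaneous diagonalization in the Macdonald basis, then invoking \eqref{eqn:tildeE-vertex}, Proposition \ref{prop:Composition}, and the linear-combination clause of the definition. Your homogeneity caveat is also well taken: the paper's own formula for $\Psi^m$ already contains a weight-zero scalar summand, so its phrase ``weighted homogeneous polynomials'' must be read loosely (homogeneous only after grouping by weight), precisely as you flag.
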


Therefore,
one can use the results for correlation functions
for $\tilde{E}^r$ to get
correlations functions of these operators.
For example, by \eqref{eqn:pm-in-e} we have
\be
\Psi^m = \frac{(-1)^{m}mt^{m}}{1-q^{m}} \cdot \sum_{|\lambda|=m} \alpha_\lambda \cdot
\tilde{E}^\lambda
+ \frac{1}{(1-q^m)(1-t^{-m})},
\ee
where for $\lambda=(\lambda_1, \dots, \lambda_l)$,
$\tilde{E}^\lambda = \tilde{E}^{\lambda_1} \cdots \tilde{E}^{\lambda_l}$.
In particular,
\bea
&& \Psi^1 = \frac{-t}{1-q} \tilde{E}^1 + \frac{1}{(1-q)(1-t^{-1})}, \\
&& \Psi^2 = \frac{2t^{2}}{1-q^{2}}\biggl(\tilde{E}^2-\half (\tilde{E}^1)^2\biggr)
+ \frac{1}{(1-q^2)(1-t^{-2})}, \\
&& \Psi^3 = \frac{-3t^{3}}{1-q^{3}}\biggl(\tilde{E}^3- \tilde{E}^2\tilde{E}^1
+ \frac{1}{3}(\tilde{E}^1)^3\biggr)
+ \frac{1}{(1-q^3)(1-t^{-3})}.
\eea
It follows that we have:
\ben
\corr{\Psi^1}'_{u,v} & = & \frac{-t}{1-q} \corr{\tilde{E}^1}'_{u,v} + \frac{1}{(1-q)(1-t^{-1})}\\
& = & \frac{-t}{1-q} \cdot \frac{t^{-1}}{1-t^{-1}} \biggl( 1- Q \frac{(1-u)(1-v)}{1- u Q} \biggr)
+ \frac{1}{(1-q)(1-t^{-1})} \\
& = & Q \frac{(1-u)(1-v)}{(1-q)(1-t^{-1})(1- u Q)}.
\een

\ben
\corr{\Psi^2}'_{u,v}
& = & -\frac{2t^{2}}{1-q^{2}}\biggl(\half \corr{(\tilde{E}^1)^2}'_{u,v}
- \corr{\tilde{E}^2}'_{u,v}\biggr)
+ \frac{1}{(1-q^2)(1-t^{-2})} \\
& = & -\frac{t^{2}}{1-q^{2}}\biggl( \biggl(\frac{t^{-1}}{1-t^{-1}}  \biggr)^2 \cdot
\biggl[\biggl( 1- Q\frac{(1-u)(1-v)}{1-uQ} \biggr)^2 \\
& + & Q\frac{(1-Q)(1-uvQ)}{(1-uQ)^2} \cdot (1-u)(1-v) \cdot
\frac{(1-q)(1-t^{-1})}{(1-uqQ)(1-ut^{-1}Q)} \biggr] \\
& - & 2 \frac{t^{-3}}{(1-t^{-1})(1-t^{-2})} \biggl[
\biggl(1- Q \frac{(1-u)(1-v)}{1-u Q}\biggr)^2 \\
& + & (1-t^{-1}) Q \frac{(1-Q)(1-u)(1-v)(1-uvQ)}{(1-t^{-1}uQ)(1-uQ)^2}
\biggr] \biggr)
+ \frac{1}{(1-q^2)(1-t^{-2})} \\
& = & \frac{1}{(1-q^2)(1-t^{-2})} - \frac{1}{(1-q^2)(1-t^{-2})}
\cdot \biggl( 1- Q\frac{(1-u)(1-v)}{1-uQ} \biggr)^2 \\
& + & \frac{-1+q+t^{-1}+qt^{-1} - 2uqt^{-1}Q}{(1-q^2)(1-t^{-2})}
\cdot \frac{Q(1-Q)(1-u)(1-v)(1-uvQ)}{(1-uQ)^2(1-uqQ)(1-ut^{-1}Q)}.
\een
We also have
\ben
\corr{(\Psi^1)^2}'_{u,v} & = & \frac{t^2}{(1-q)^2} \corr{(\tilde{E}^1)^2}'_{u,v}
- \frac{2t}{(1-q)^2(1-t^{-1})} \corr{\tilde{E}^1}'_{u,v}
+ \frac{1}{(1-q)^2(1-t^{-1})^2} \\
& = & \frac{t^2}{(1-q)^2} \cdot \biggl(\frac{t^{-1}}{1-t^{-1}}  \biggr)^2 \cdot
\biggl[\biggl( 1- Q\frac{(1-u)(1-v)}{1-uQ} \biggr)^2 \\
&+ & Q\frac{(1-Q)(1-uvQ)}{(1-uQ)^2} \cdot (1-u)(1-v) \cdot
\frac{(1-q)(1-t^{-1})}{(1-uqQ)(1-ut^{-1}Q)} \biggr]\\
& - & \frac{2t}{(1-q)^2(1-t^{-1})} \cdot
\frac{t^{-1}}{1-t^{-1}} \biggl( 1- Q \frac{(1-u)(1-v)}{1- u Q} \biggr)
+ \frac{1}{(1-q)^2(1-t^{-1})^2} \\
& = & \biggl( Q \frac{(1-u)(1-v)}{(1-q)(1-t^{-1})(1- u Q)} \biggr)^2 \\
& + & \frac{(1-u)(1-v)}{(1-q)(1-t^{-1})}
\cdot Q\frac{(1-Q)(1-uvQ)}{(1-uQ)^2(1-uqQ)(1-ut^{-1}Q)}.
\een
Since we have
\be
\Lambda^2 = \half ((\Psi^1)^2-\Psi^2),
\ee
we get the following formula:
\ben
2 \corr{\Lambda^2}'_{u,v}
& = & \biggl( Q \frac{(1-u)(1-v)}{(1-q)(1-t^{-1})(1- u Q)} \biggr)^2 \\
& + & \frac{(1-u)(1-v)}{(1-q)(1-t^{-1})}
\cdot Q\frac{(1-Q)(1-uvQ)}{(1-uQ)^2(1-uqQ)(1-ut^{-1}Q)} \\
& - & \frac{1}{(1-q^2)(1-t^{-2})} + \frac{1}{(1-q^2)(1-t^{-2})}
\cdot \biggl( 1- Q\frac{(1-u)(1-v)}{1-uQ} \biggr)^2 \\
& - & \frac{-1+q+t^{-1}+qt^{-1} - 2uqt^{-1}Q}{(1-q^2)(1-t^{-2})}
\cdot \frac{Q(1-Q)(1-u)(1-v)(1-uvQ)}{(1-uQ)^2(1-uqQ)(1-ut^{-1}Q)} \\
& = & \biggl( Q \frac{(1-u)(1-v)}{(1-q)(1-t^{-1})(1- u Q)} \biggr)^2 \\
& - & \frac{1}{(1-q^2)(1-t^{-2})} + \frac{1}{(1-q^2)(1-t^{-2})}
\cdot \biggl( 1- Q\frac{(1-u)(1-v)}{1-uQ} \biggr)^2 \\
& + & \frac{2+ 2uqt^{-1}Q}{(1-q^2)(1-t^{-2})}
\cdot \frac{Q(1-Q)(1-u)(1-v)(1-uvQ)}{(1-uQ)^2(1-uqQ)(1-ut^{-1}Q)}.
\een

\section{Applications to K-Theoretical Intersection Numbers on Hilbert Schemes}

In this Section we return to the discussions in Section 4 and the beginning of Section 5.

\subsection{Formal quantum field theory associated to $K$-theory on Hilbert schemes}
\label{sec:K-as-QFT}

Let $X$ be a regular algebraic surface.
For $F,F_1, \dots, F_N\in K(X)$,
define the following normalized correlator by
\be
\begin{split}
& \corr{\Psi^{m_1}(F_1) \cdots \Psi^{m_N}(F_N)}'_{u,v,F}\\
: = & \frac{\sum_{n \geq 0} Q^n
\chi(X, \bigotimes_{j=1}^N \psi^{m_j}(F_j^{[n]})
\otimes
\lambda_{-u}F^{[n]}\otimes \lambda_{-v} (F^{[n]})^*)}{
\sum_{n \geq 0} Q^n
\chi(X, \lambda_{-u}F^{[n]}\otimes \lambda_{-v} (F^{[n]})^*)}.
\end{split}
\ee
We can also define other correlators: When $\Psi^m$ is replaced by $\Sigma^m$ or $\Lambda^m$,
$\psi^m$ is replaced by $\sigma^m$ or $\lambda^m$.
The connected normalized correlators will be denoted by
$\corr{\cdot}_{u, v, F, c}$.

When $X$ is toric with a $T$-action and
we  work with equivariant $K$-theory,
the normalized correlators will be denoted by $\corr{\cdot}'_{u, v, F; T}$.
The connected normalized correlators will be denoted by
$\corr{\cdot}_{u, v, F, c; T}$.

\subsection{Equivariant K-theory of Hilbert schemes of $\bC^2$ as a vertex operatorial formal quantum field theory}

We rewrite \eqref{eqn:K-Int} as follows:
\begin{equation} \label{eqn:K-Int-22}
\begin{split}
& \sum_{n \geq 0} Q^n
\chi((\bC^2)^{[n]}, \bigotimes_{j=1}^N \psi^{m_j}(\xi_n^{A_j})
\otimes
\Lambda_{-u}\xi_n^A \otimes \Lambda_{-v} (\xi_n^A)^*)(t_1, t_2) \\
= & \prod_{j=1}^N e^{m_j(a_jw_1+b_jw_2)} \cdot
 \sum_{\mu} (-ut^A) Q^{|\mu|} \prod_{j=1}^N \sum_{s\in \mu} e^{m_j[l'(s)w_1+a'(s)w_2]}
 \\
& \cdot \prod_{s \in \mu}
\frac{(t_1^{l'(s)} -(ut^A)^{-1} t_2^{-a'(s)}) \cdot(t_2^{a'(s)}- (vt^{-A})t_1^{-l'(s)})}
{(1- t_1^{-l(s)} t_2^{a(s)+1})(1 - t_1^{l(s)+1}t_2^{-a(s)})}.
\end{split}
\end{equation}
After taking
\be
t_2=q,  \;\;\;\;\;t_1 =t^{-1},
\ee
we find
\begin{equation} \label{eqn:K-Int2}
\begin{split}
& \sum_{n \geq 0} Q^n
\chi((\bC^2)^{[n]}, \bigotimes_{j=1}^N \psi^{m_j}(\xi_n^{A_j})
\otimes
\Lambda_{-u}\xi_n^A \otimes \Lambda_{-v} (\xi_n^A)^*)(t_1, t_2) \\
= & \prod_{j=1}^N e^{m_j(a_jw_1+b_jw_2)} \cdot
\corr{\Psi^{m_1} \cdots \Psi^{m_N} }_{ut^A, vt^{-A}}.
\end{split}
\ee
So we have reached our main result:

\begin{theorem}
The equivariant K-theory on Hilbert schemes on $\bC^2$
can be reformulated as a vertex operatorial formal quantum field theory
and can be computed using the vertex operator realizations
of the Macdonald operators.
\end{theorem}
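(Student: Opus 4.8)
The plan is to treat this as the structural upshot of the identity \eqref{eqn:K-Int2}, so that the proof reduces to (i) establishing that identity and (ii) checking that the data it produces literally satisfies the axioms of Section~2. First I would fix the ingredients of a special operatorial formal quantum field theory: take $H=\Lambda_{q,t}$, the bosonic Fock space, and let the observables be the power-operation operators $\{\Psi^m\}_{m\geq 1}$ together with $\cO_0=\id$. These are simultaneously diagonalized by the Macdonald basis $\{P_\mu(x;q,t)\}$ and therefore commute, so they generate a commutative operator algebra as required. The correlators are to be the $(u,v)$-brackets $\corr{\Psi^{m_1}\cdots\Psi^{m_N}}_{ut^A,vt^{-A}}$, valued in $\bQ(q,t)$ extended by $u,v,Q$; by Proposition~\ref{prop:Correlator} each such bracket is a genuine vacuum expectation value of the form $\lvac B^*\cdots A\vac$, so it does define a formal quantum field theory in the sense of Section~2.

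Second, the heart of the matter is \eqref{eqn:K-Int2} itself. I would establish it by comparing the localization expression \eqref{eqn:K-Int}, rewritten as \eqref{eqn:K-Int-22} after extracting the prefactor $\prod_j e^{m_j(a_jw_1+b_jw_2)}$, with the definition of the $(u,v)$-bracket term by term over partitions $\mu$, under the substitution $t_2=q$, $t_1=t^{-1}$. The decisive observation is that the insertion factor $\prod_j \sum_{s\in\mu} e^{m_j[l'(s)w_1+a'(s)w_2]}$ equals $\prod_j p_{m_j}(\{t^{-l'(s)}q^{a'(s)}\}_{s\in\mu})$, which is precisely the eigenvalue $a_\mu(q,t)$ of $\Psi^{m_1}\cdots\Psi^{m_N}$ on $P_\mu$; this is the Haiman half of the story, via \eqref{eqn:Arm-Leg} and \eqref{eqn:E-2}. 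The remaining product of hook-type factors is then matched against the specialized Macdonald polynomials of \S\ref{sec:Specialization}, which is the Wang--Zhou half, already carried out there for the empty correlator $\corr{1}_{u,v}$ in \eqref{eqn:Orth3-4} and its companion.

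Third, to make good on ``can be computed using the vertex operator realizations,'' I would invoke the Proposition asserting that $\Psi^m$ (and likewise $\Sigma^m,\Lambda^m$) are weighted-homogeneous polynomials in the modified Macdonald operators $\tilde{E}^r$, hence vertex realizable, with explicit expressions such as \eqref{eqn:pm-in-e}. Combined with the composition rule of Proposition~\ref{prop:Composition} and the correlator formula \eqref{eqn:A'} of Proposition~\ref{prop:Correlator}, every correlator $\corr{\Psi^{m_1}\cdots\Psi^{m_N}}_{u,v}$ is reduced to an iterated constant-term extraction of an explicit vertex-operator kernel, exactly as illustrated for $\corr{\tilde{E}^r}'_{u,v}$ and $\corr{\tilde{E}^1\tilde{E}^1}_{u,v}$. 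This exhibits the theory as vertex operatorial in the sense of Section~2, completing the claim.

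The hard part will be the bookkeeping in the second step: reconciling the exponents $l'(s),a'(s),l(s),a(s)$ and the transpose and parameter-interchange conventions $(q,t)\leftrightarrow(t,q)$, $P_\lambda\leftrightarrow P_{\lambda^t}$ between the Haiman normalization and the Wang--Zhou specialization, so that both halves fit under a single bracket with the single shift $u\mapsto ut^A$, $v\mapsto vt^{-A}$. This is precisely the point the paper flags as crucial: the inclusion of $\Lambda_{-u}\xi_n^A\otimes\Lambda_{-v}(\xi_n^A)^*$ is what allows the two connections to coexist in one framework. Once the term-by-term matching over $\mu$ is verified, the theorem follows with no further analysis.
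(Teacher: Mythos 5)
Your proposal is correct and follows essentially the same route as the paper: the theorem there is precisely the identity \eqref{eqn:K-Int2}, obtained by rewriting the holomorphic Lefschetz sum \eqref{eqn:K-Int} as \eqref{eqn:K-Int-22}, substituting $t_2=q$, $t_1=t^{-1}$, and recognizing the eigenvalue factor (Haiman half) and the specialized hook products (Wang--Zhou half, \S\ref{sec:Specialization}) as the $(u,v)$-bracket $\corr{\Psi^{m_1}\cdots\Psi^{m_N}}_{ut^A,vt^{-A}}$, with computability supplied exactly as you say by the vertex realizability of $\Psi^m$ via \eqref{eqn:pm-in-e} together with Propositions \ref{prop:Composition} and \ref{prop:Correlator}. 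You even correctly isolate the one delicate point -- the transpose and $(q,t)\leftrightarrow(t,q)$ conventions under the single shift $u\mapsto ut^A$, $v\mapsto vt^{-A}$ -- which is where the paper's own bracket definition has an apparent sign-of-exponent slip ($t^{l'(s)}$ versus $t^{-l'(s)}$) that your term-by-term matching against \eqref{eqn:Orth3-4} would resolve.
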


\subsection{Generalization to toric surfaces}

In this Subsection we extend the above result to Hilbert schemes
on a projective or quasi-projective
surface $X$ which admits a torus action with isolated fixed points.
Suppose that $T^2$ acts on $X$ with isolated fixed points
$p_1, \dots, p_m$,
such that the weights of $T_{p_i}S =t_{1,i}^{-1}+t_{2,i}^{-1}$.
The $T^2$-action on $S$ induces a natural $T^2$-action on $S^{[n]}$.
The fixed points on $X^{[n]}$ are parameterized by
$m$-tuples of partitions $(\mu^1, \dots, \mu^m)$,
where $\mu^i$ can be an empty partition,
such that
$$|\mu^1| + \cdots + |\mu^m| = n.$$
Furthermore,
the weight decomposition of the cotangent space at the fixed point is given by:
$$\sum_{i=1}^m \sum_{s^i \in \mu^i} (t_{1,i}^{l(s^i)} t_{2,i}^{-(a(s^i)+1)}
+ t_{1,i}^{-(l(s^i)+1)}t_{2,i}^{a(s^i)}),$$
Suppose that $L$ is an equivariant holomorphic line bundle on $S$ such that
\begin{equation}
L|_{p_i} = t^{A^i} = t_1^{a^i_1}t_2^{a^i_2}.
\end{equation}
The weights of $L^{[n]}$ are:
$$\sum_{i=1}^m t^{A^i} \sum_{s^i \in \mu^i} t_{1,i}^{l'(s^i)}t_{2,i}^{a'(s^i)}.$$
By holomorphic Lefschetz formula one then gets:

\begin{theorem}
Let $X$ be a surface as above and let $L, L_1, \dots, L_N$ be holomorphic line bundles such that
\begin{align}
L|_{p_i} & = t^{A^i} = t_1^{a^i_1}t_2^{a^i_2}, &
L_j|_{p_i} & = t^{A_j^i} = t_1^{a^i_{1,j}}t_2^{a^i_{j,2}}.
\end{align}
Then we have
\begin{equation} \label{eqn:Exterior}
\begin{split}
& \sum_{n \geq 0} Q^n
\chi(X^{[n]}, \bigotimes\limits_{j=1}^N \Lambda_{x_j} L_j^{[n]}
\otimes
\Lambda_{-u}L^{[n]} \otimes \Lambda_{-v}L^{[n]*})(t_1, t_2)  \\
= & \prod_{i=1}^m \sum_{\mu^i} Q^{|\mu^i|}
\prod_{s^i \in \mu^i} \prod_{j=1}^N (1 + x_j t^{A_j^i}  t_{1,i}^{l'(s^i)}t_{2,i}^{a'(s^i)}) \\
& \cdot \prod_{s^i \in \mu^i}
\frac{(1-u t^{A^i} t_{1,i}^{l'(s^i)}t_{2,i}^{a'(s^i)})(1-v t^{-A^i} t_{1,i}^{-l'(s^i)}t_{2,i}^{-a'(s^i)})}
{(1- t_{1,i}^{-l(s^i)} t_{2,i}^{a(s^i)+1})(1 - t_{1,i}^{l(s^i)+1}t_{2,i}^{-a(s^i)})}, \\
\end{split} \end{equation}
and similarly for symmetric powers,
\begin{equation} \label{eqn:Symmetric}
\begin{split}
& \sum_{n \geq 0} Q^n
\chi(X^{[n]}, \bigotimes\limits_{j=1}^N S_{x_j} L_j^{[n]}
\otimes
\Lambda_{-u}L^{[n]} \otimes \Lambda_{-v}L^{[n]*})(t_1, t_2)  \\
= & \prod_{i=1}^m \sum_{\mu^i} Q^{|\mu^i|}
\prod_{s^i \in \mu^i} \prod_{j=1}^N \frac{1}{1 - x_j t^{A_j^i}  t_{1,i}^{l'(s^i)}t_{2,i}^{a'(s^i)}} \\
& \cdot \prod_{s^i \in \mu^i}
\frac{(1-u t^{A^i} t_{1,i}^{l'(s^i)}t_{2,i}^{a'(s^i)})(1-v t^{-A^i} t_{1,i}^{-l'(s^i)}t_{2,i}^{-a'(s^i)})}
{(1- t_{1,i}^{-l(s^i)} t_{2,i}^{a(s^i)+1})(1 - t_{1,i}^{l(s^i)+1}t_{2,i}^{-a(s^i)})} .
\end{split} \end{equation}
\end{theorem}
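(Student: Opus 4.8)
The plan is to deduce both \eqref{eqn:Exterior} and \eqref{eqn:Symmetric} from the holomorphic Lefschetz fixed point formula \cite{Ati-Sin}, in exactly the same convention used to obtain the affine formula \eqref{eqn:K-Int0}, the only genuinely new feature being that now there are several fixed points $p_1, \dots, p_m$ on the surface to keep track of. First I would apply the formula to the equivariant Euler characteristic on $X^{[n]}$, expressing it as a sum over the isolated $T^2$-fixed points of $X^{[n]}$; by the stated parameterization these are indexed by the $m$-tuples of partitions $(\mu^1, \dots, \mu^m)$ with $\sum_i |\mu^i| = n$. The denominator of each summand is the tangent-space contribution, read off from the stated weight decomposition $\sum_{i=1}^m \sum_{s^i \in \mu^i}(t_{1,i}^{l(s^i)} t_{2,i}^{-(a(s^i)+1)} + t_{1,i}^{-(l(s^i)+1)} t_{2,i}^{a(s^i)})$, and reproduces the products $\prod_{s^i}(1 - t_{1,i}^{-l(s^i)} t_{2,i}^{a(s^i)+1})(1 - t_{1,i}^{l(s^i)+1} t_{2,i}^{-a(s^i)})$ appearing on the right-hand sides.

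Next I would compute the numerators using the multiplicativity of the total exterior and symmetric power operations over direct sums, namely $\Lambda_x(E\oplus F) = \Lambda_x(E)\Lambda_x(F)$ and $S_x(E\oplus F) = S_x(E)S_x(F)$. Since at a fixed point $L_j^{[n]}$ decomposes into one-dimensional weight spaces with weights $t^{A_j^i} t_{1,i}^{l'(s^i)} t_{2,i}^{a'(s^i)}$, and for a one-dimensional space of weight $w$ one has $\Lambda_x(w) = 1 + xw$ while $S_x(w) = (1 - xw)^{-1}$, the characters of $\bigotimes_j \Lambda_{x_j} L_j^{[n]}$ and $\bigotimes_j S_{x_j} L_j^{[n]}$ become the indicated products $\prod_{s^i}\prod_j (1 + x_j t^{A_j^i} t_{1,i}^{l'(s^i)} t_{2,i}^{a'(s^i)})$ and $\prod_{s^i}\prod_j (1 - x_j t^{A_j^i} t_{1,i}^{l'(s^i)} t_{2,i}^{a'(s^i)})^{-1}$, respectively. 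The same device applied to $\Lambda_{-u} L^{[n]}$ and to $\Lambda_{-v}(L^{[n]})^*$, whose weights are the inverses $t^{-A^i} t_{1,i}^{-l'(s^i)} t_{2,i}^{-a'(s^i)}$, yields the two remaining factors of each summand. Finally, because every factor is organized by the index $i$ and $Q^n = \prod_{i=1}^m Q^{|\mu^i|}$, the sum over all $m$-tuples of partitions factors as a product $\prod_{i=1}^m(\sum_{\mu^i}\cdots)$ over the surface fixed points, which is precisely the product form asserted.

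The step I expect to require the most care is the justification of the weight decomposition of the tautological bundles $L_j^{[n]}$ at the fixed points, together with the factorization over the $p_i$. Both rest on the fact that a $T^2$-invariant length-$n$ subscheme of $X$ is supported at the fixed points and decomposes as a disjoint union of subschemes concentrated at the individual $p_i$; near each $p_i$ the smooth surface $X$ is equivariantly modeled on $\bC^2$ with the torus acting by weights $t_{1,i}^{-1}, t_{2,i}^{-1}$, so the local structure of $X^{[n]}$ and the restriction of each $L_j^{[n]}$ match the $\bC^2$ computations of Section 4 after the substitutions $t_1 \mapsto t_{1,i}$, $t_2 \mapsto t_{2,i}$, $A_j \mapsto A_j^i$. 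Granting this local reduction, both formulas are an immediate assembly of the local contributions, and no further analytic input is needed.
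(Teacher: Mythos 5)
Your proposal is correct and follows exactly the route the paper takes: the paper's entire proof is the one-line invocation ``By holomorphic Lefschetz formula one then gets,'' resting on the same fixed-point parameterization by $m$-tuples of partitions, the same tangent and tautological weight decompositions, and the same factorization $Q^n = \prod_i Q^{|\mu^i|}$ that you spell out. Your additional justifications (multiplicativity of $\Lambda_x$ and $S_x$ on weight-space decompositions, and the local equivariant model of $X$ near each $p_i$ as $\bC^2$ with weights $t_{1,i}^{-1}, t_{2,i}^{-1}$, matching the Section 4 computations) are precisely the details the paper leaves implicit.
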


\subsection{Reduction of $K$-theory on Hilbert schemes to vertex operatorial
formal quantum field theory}
By this Theorem,
we can reduce the $K$-theoretical intersection numbers on Hilbert schemes
to the special formal quantum field theory calculations we develop in last Section.
The strategy, as in Wang-Zhou \cite{Wang-Zhou1},
is to apply the results of Ellingsrud-G\"ottsche-Lehn \cite{EGL} to reduce
first to toric surfaces then to $\bC^2$.
We illustrate the idea by some examples in this Subsection.

If we take the coefficient of $x_1$ on both sides of \eqref{eqn:Exterior},
we get the following equations for toric surface $X$ and equivariant line bundle $L, L_1$:
\begin{equation*}
\begin{split}
& \frac{ \sum_{n \geq 0} Q^n
\chi(X^{[n]},   L_1^{[n]}
\otimes
\Lambda_{-u}L^{[n]} \otimes \Lambda_{-v}L^{[n]*})(t_1, t_2)}
{\sum_{n \geq 0} Q^n
\chi(X^{[n]},
\Lambda_{-u}L^{[n]} \otimes \Lambda_{-v}L^{[n]*})(t_1, t_2)}  \\
= & \sum_{i=1}^m \frac{  \sum\limits_{\mu^i} Q^{|\mu^i|}
\sum\limits_{s^i \in \mu^i} t^{A_1^i}  t_{1,i}^{l'(s^i)}t_{2,i}^{a'(s^i)}
\cdot
 \prod\limits_{s^i \in \mu^i}
\frac{(1-u t^{A^i} t_{1,i}^{l'(s^i)}t_{2,i}^{a'(s^i)})(1-v t^{-A^i} t_{1,i}^{-l'(s^i)}t_{2,i}^{-a'(s^i)})}
{(1- t_{1,i}^{-l(s^i)} t_{2,i}^{a(s^i)+1})(1 - t_{1,i}^{l(s^i)+1}t_{2,i}^{-a(s^i)})} }
{\sum_{\mu^i} Q^{|\mu^i|}
 \prod_{s^i \in \mu^i}
\frac{(1-u t^{A^i} t_{1,i}^{l'(s^i)}t_{2,i}^{a'(s^i)})(1-v t^{-A^i} t_{1,i}^{-l'(s^i)}t_{2,i}^{-a'(s^i)})}
{(1- t_{1,i}^{-l(s^i)} t_{2,i}^{a(s^i)+1})(1 - t_{1,i}^{l(s^i)+1}t_{2,i}^{-a(s^i)})} } \\
= & \sum_{i=1}^m t^{A_1^i} \corr{\Lambda^1}'_{ut^{A^i}, vt^{-A^i};t_{1,i}, t_{2,i}} \\
= & Q \sum_{i=1}^m t^{A_1^i}   \frac{(1-u)(1-v)}{(1-uQ)(1- t_{1,i})(1- t_{2,i})} \\
= & \frac{(1-u)(1-v) Q}{1-uQ} \cdot \chi(S, L_1)(t_1, t_2).
\end{split}
\end{equation*}
Here $\corr{\cdot}'_{u,v; t_{1,i}, t_{2,i}}$ means the following specialization of
the parameters $q$ and $t$:
\be
q= t_{2,i},  \;\;\;\;\; t= t_{1,i}^{-1},
\ee
After taking the nonequivariant limit,
\begin{equation*}
\begin{split}
& \frac{ \sum_{n \geq 0} Q^n
\chi(X^{[n]},   L_1^{[n]} \otimes
\Lambda_{-u}L^{[n]} \otimes \Lambda_{-v}L^{[n]*})}
{\sum_{n \geq 0} Q^n
\chi(X^{[n]},
\Lambda_{-u}L^{[n]} \otimes \Lambda_{-v}L^{[n]*})}
= \frac{(1-u)(1-v) Q}{1-uQ} \cdot \chi(X, L_1).
\end{split}
\end{equation*}
Combined with the following formula proved in Wang-Zhou \cite{Wang-Zhou1}:
\begin{equation}
 \sum_{n \geq 0} Q^n
\chi(X^{[n]}, \Lambda_{-u}L^{[n]} \otimes \Lambda_{-v} L^{[n]*})
=  \exp(\sum_{n=1}^{\infty} \frac{Q^n}{n}
\chi(X, \Lambda_{-u^n}L\otimes \Lambda_{-v^n}L^*),
\end{equation}
we then get:
\begin{equation}
\begin{split}
 \sum_{n \geq 0} Q^n
& \chi(X^{[n]},  L_1^{[n]} \otimes \Lambda_{-u}L^{[n]} \otimes \Lambda_{-v} L^{[n]*}) \\
= & \frac{(1-u)(1-v) Q}{1-uQ} \cdot \chi(X, L_1)
\cdot \exp(\sum_{n=1}^{\infty} \frac{Q^n}{n}
\chi(X, \Lambda_{-u^n}L\otimes \Lambda_{-v^n}L^*).
\end{split}
\end{equation}
By the results of Ellingsrud-G\"ottsche-Lehn \cite{EGL},
this formula holds for all projective surfaces.
In the notation of \S \ref{sec:K-as-QFT},
\be
\corr{\lambda^1(L_1)}'_{u,v,L} = \frac{(1-u)(1-v) Q}{1-uQ} \cdot \chi(X, L_1).
\ee

If we take the coefficient of $x_1x_2$ on both sides of \eqref{eqn:Exterior},
we get the following equations for toric surface $X$ and equivariant line bundle $L, L_1, L_2$:
\begin{equation*}
\begin{split}
& \frac{ \sum_{n \geq 0} Q^n
\chi(X^{[n]},   L_1^{[n]}
\otimes L_2^{[n]} \otimes
\Lambda_{-u}L^{[n]} \otimes \Lambda_{-v}L^{[n]*})(t_1, t_2)}
{\sum_{n \geq 0} Q^n
\chi(X^{[n]},
\Lambda_{-u}L^{[n]} \otimes \Lambda_{-v}L^{[n]*})(t_1, t_2)}  \\
= & \sum_{i=1}^m t^{A_1^i}t^{A^i_2} \corr{(\Lambda^1)^2 }'_{ut^{A^i}, vt^{-A^i};t_{1,i}, t_{2,i}}
+ \sum_{1 \leq i \neq j \leq m} t^{A_1^i} \corr{\Lambda^1}'_{ut^{A^i}, vt^{-A^i}}
\cdot t^{A^j_2}\corr{\Lambda^1}'_{ut^{A^j}, vt^{-A^j}; t_{1, i}, t_{2,i}} \\
= &  \sum_{i=1}^n t^{A_1^i}t^{A^i_2} \biggl[ \biggl( Q \frac{(1-u)(1-v)}{(1-t_{1,i})(1-t_{2,i})(1- u Q)} \biggr)^2 \\
+ & \frac{(1-u)(1-v)}{(1-t_{1,i})(1-t_{2,i})}
\cdot Q\frac{(1-Q)(1-uvQ)}{(1-uQ)^2(1-ut_{1,i}Q)(1-ut_{2,i}Q)} \biggr] \\
+ & \sum_{1 \leq i \neq j \leq m}
  t^{A_1^i} t^{A_2^j} Q\frac{(1-u)(1-v)}{(1- t_{1,i})(1- t_{2,i})(1-uQ)}
\cdot Q \frac{(1-u)(1-v)}{(1- t_{1,j})(1- t_{2,j})(1-uQ)} \\
= & \sum_{i=1}^m  t^{A_1^i} Q \frac{(1-u)(1-v)}{(1- t_{1,i})(1- t_{2,i})(1- u Q)}
\cdot  \sum_{j=1}^m t^{A_2^j} Q \frac{(1-u)(1-v)}{(1- t_{1,j})(1- t_{2,j})(1- u Q)}  \\
+ & \sum_{i=1}^m  \frac{(1-u)(1-v)}{(1- t_{1,j})(1- t_{2,j})}
\cdot Q\frac{(1-Q)(1-uvQ)t^{A_1^i}t^{A^i_2} }{(1-uQ)^2(1-ut_{1,j}Q)(1-ut_{2,j}Q)} \\
= & \prod_{j=1}^2 \biggl( \frac{Q(1-u)(1-v)}{1-uQ} \chi(X,L_j)(t_1, t_2) \biggr) \\
+ & (1-u)(1-v)
\cdot Q\frac{(1-Q)(1-uvQ)}{(1-uQ)^2} \chi(X, L_1\otimes L_2 \otimes S_{uQ}T^*X)(t_1, t_2).
\end{split}
\end{equation*}
After taking the nonequivariant limit:
\begin{equation*}
\begin{split}
& \frac{ \sum_{n \geq 0} Q^n
\chi(X^{[n]},   L_1^{[n]}
\otimes L_2^{[n]} \otimes
\Lambda_{-u}L^{[n]} \otimes \Lambda_{-v}L^{[n]*})}
{\sum_{n \geq 0} Q^n
\chi(X^{[n]},
\Lambda_{-u}L^{[n]} \otimes \Lambda_{-v}L^{[n]*})}  \\
= & \prod_{j=1}^2 \biggl( \frac{Q(1-u)(1-v)}{1-uQ} \chi(X,L_j)\biggr) \\
+ & (1-u)(1-v)
\cdot Q\frac{(1-Q)(1-uvQ)}{(1-uQ)^2} \chi(X, L_1\otimes L_2 \otimes S_{uQ}T^*X).
\end{split}
\end{equation*}
In the notation of \S \ref{sec:K-as-QFT},
\be \begin{split}
& \corr{\lambda^1(L_1)\lambda^1(L_2)}'_{u,v,L}
= \prod_{j=1}^2 \biggl( \frac{Q(1-u)(1-v)}{1-uQ} \chi(X,L_j) \biggr) \\
& \;\;\;\;\;\; +  (1-u)(1-v)
\cdot Q\frac{(1-Q)(1-uvQ)}{(1-uQ)^2} \chi(X, L_1\otimes L_2 \otimes S_{uQ}T^*X) \\
& \;\;\;\;\;\; = \corr{\lambda^1(L_1)}_{u,v,L}' \cdot \corr{\lambda^1(L_2)}'_{u,v,L} )  \\
& \;\;\;\;\;\;+(1-u)(1-v)
\cdot Q\frac{(1-Q)(1-uvQ)}{(1-uQ)^2} \chi(X, L_1\otimes L_2 \otimes S_{uQ}T^*X).
\end{split} \ee
Therefore,
the following formula for connected corrlator holds:
\be \begin{split}
& \corr{\lambda^1(L_1)\lambda^1(L_2)}'_{u,v,L,c}  \\
& \;\;\;\;\;\;=(1-u)(1-v)
\cdot Q\frac{(1-Q)(1-uvQ)}{(1-uQ)^2} \cdot \chi(X, L_1\otimes L_2 \otimes S_{uQ}T^*X).
\end{split}
\ee

If we take the coefficient of $x_1^2$ on both sides of the above formulas,
we get:
\begin{equation*}
\begin{split}
& \frac{ \sum_{n \geq 0} Q^n
\chi(S^{[n]},   \Lambda^2L_1^{[n]} \otimes
\Lambda_{-u}L^{[n]} \otimes \Lambda_{-v}L^{[n]*})(t_1, t_2)}
{\sum_{n \geq 0} Q^n
\chi(S^{[n]},
\Lambda_{-u}L^{[n]} \otimes \Lambda_{-v}L^{[n]*})(t_1, t_2)}  \\
= & \sum_{i=1}^n (t^{A_1^i})^2 \corr{\Lambda^2 }'_{ut^{A^i}, vt^{-A^i}}
+ \sum_{1 \leq i < j \leq n} t^{A_1^i} \corr{\Lambda^1}'_{ut^{A^i}, vt^{-A^i}}
\cdot t^{A_1^j} \corr{\Lambda^1}'_{ut^{A^j}, vt^{-A^j}} \\
= &  \half \sum_{i=1}^n (t^{A_1^i})^2 \biggl[
\biggl( Q \frac{(1-u)(1-v)}{(1-t_{1,i})(1-t_{2,i})(1- u Q)} \biggr)^2 \\
- & \frac{1}{(1-t_{1,i}^2)(1-t_{2,i}^2)} + \frac{1}{(1-t_{1,i}^2)(1-t_{2,i}^2)}
\cdot \biggl( 1- Q\frac{(1-u)(1-v)}{1-uQ} \biggr)^2 \\
+ & \frac{2+ 2uqt^{-1}Q}{(1-t_{1,i}^2)(1-t_{2,i}^2)}
\cdot \frac{Q(1-Q)(1-u)(1-v)(1-uvQ)}{(1-uQ)^2(1-ut_{1,i}Q)(1-ut_{2,i}Q)} \\
+ & \frac{1}{(1-t_{1,i})(1-t_{2,i})}
\cdot  \frac{Q(1-Q)(1-u)(1-v)(1-uvQ)}{(1-uQ)^2(1-ut_{1,i}Q)(1-ut_{2,i}Q)} \biggr] \\
+ & \sum_{1 \leq i < j \leq n}
  t^{A_1^i} t^{A_1^j} \frac{Q(1-u)(1-v)}{(1- t_{1,i} )(1-t_{2,i} )(1-uQ)}
\cdot \frac{Q(1-u)(1-v)}{(1- t_{1,j} )(1-t_{2,j} )(1-uQ)} \\
= & \half \biggl( \sum_{i=1}^n Q t^{A_1^i} \frac{(1-u)(1-v)}{(1-t_{1,i})(1-t_{2,i})(1- u Q)} \biggr)^2 \\
+ & \half \sum_{i=1}^n (t^{A_1^i})^2 \biggl[
\biggl( -\frac{1}{(1-t_{1,i}^2)(1-t_{2,i}^2)} + \frac{1}{(1-t_{1,i}^2)(1-t_{2,i}^2)}
\cdot \biggl( 1- Q\frac{(1-u)(1-v)}{1-uQ} \biggr)^2 \\
+ & \frac{2+ 2ut_{i,1}t_{i,2}Q}{(1-t_{1,i}^2)(1-t_{2,i}^2)}
\cdot \frac{Q(1-Q)(1-u)(1-v)(1-uvQ)}{(1-uQ)^2(1-ut_{1,i}Q)(1-ut_{2,i}Q)} \\
+ & \frac{1}{(1-t_{1,i})(1-t_{2,i})}
\cdot  \frac{Q(1-Q)(1-u)(1-v)(1-uvQ)}{(1-uQ)^2(1-ut_{1,i}Q)(1-ut_{2,i}Q)} \biggr].
\end{split}
\end{equation*}
We rewrite it in the following form:
\ben
&& \frac{ \sum_{n \geq 0} Q^n
\chi(S^{[n]},   \Lambda^2L_1^{[n]} \otimes
\Lambda_{-u}L^{[n]} \otimes \Lambda_{-v}L^{[n]*})(t_1, t_2)}
{\sum_{n \geq 0} Q^n
\chi(S^{[n]},
\Lambda_{-u}L^{[n]} \otimes \Lambda_{-v}L^{[n]*})(t_1, t_2)}  \\
&= & \half \biggl( Q  \frac{(1-u)(1-v)}{(1- u Q)} \cdot \chi(X, L_1)(t_1, t_2)\biggr)^2 \\
& - & \half \chi(X, L_1)(t_1^2, t_2^2)
 + \half \biggl( 1- Q\frac{(1-u)(1-v)}{1-uQ} \biggr)^2
\chi(X, L_1)(t_1^2, t_2^2) \\
& + & \frac{Q(1-Q)(1-u)(1-v)(1-uvQ)}{(1-uQ)^2} \\
&&  \cdot \chi(X,  L_1^2\otimes (\cO_X + uQ K_X)
\otimes S_\epsilon(T^*X)\otimes S_{uQ}(T^*X))(t_1, t_2) |_{\epsilon=-1} \\
& + & \frac{1}{2}
\frac{Q(1-Q)(1-u)(1-v)(1-uvQ)}{(1-uQ)^2} \chi(X, L_1^2 \otimes S_{uQ}(T^*X))(t_1, t_2).
\een
By taking nonequivariant limit one then gets:
\ben
&& \frac{ \sum_{n \geq 0} Q^n
\chi(S^{[n]},   \Lambda^2L_1^{[n]} \otimes
\Lambda_{-u}L^{[n]} \otimes \Lambda_{-v}L^{[n]*})}
{\sum_{n \geq 0} Q^n
\chi(S^{[n]},
\Lambda_{-u}L^{[n]} \otimes \Lambda_{-v}L^{[n]*})}  \\
&= & \half \biggl( Q  \frac{(1-u)(1-v)}{(1- u Q)} \cdot \chi(X, L_1)\biggr)^2 \\
& - & \half \chi(X, L_1)
 + \half \biggl( 1- Q\frac{(1-u)(1-v)}{1-uQ} \biggr)^2
\chi(X, L_1) \\
& + & \frac{Q(1-Q)(1-u)(1-v)(1-uvQ)}{(1-uQ)^2} \\
&&  \cdot \chi(X,  L_1^2\otimes (\cO_X + uQ K_X)
\otimes S_\epsilon(T^*X)\otimes S_{uQ}(T^*X))|_{\epsilon=-1} \\
& + & \frac{1}{2}
\frac{Q(1-Q)(1-u)(1-v)(1-uvQ)}{(1-uQ)^2} \chi(X, L_1^2 \otimes S_{uQ}(T^*X)).
\een
This gives a formula to compute $\corr{\lambda^2(L_1)}'_{u,v,L}$.

\section{Concluding Remarks}

In this paper we have made a first step towards reformulating
the K-theoretical intersection theory on Hilbert schemes of points
as a quantum field theory.
We have not attempted to construct a quantum field theory
as physicists do,
but instead reformulate it as a formal quantum field theory.
This not only facilitates the computations of these intersection number,
but also it suffices as a portal to further investigations
as a physicist normally will do to an authentic quantum field theory.
One can at least consider the following four types of problems.

Problem 1. Computations and properties of the correlators.
There are two directions to pursue for this question.
Symmetries intrinsic to a quantum field theory lead to symmetries
of the correlators, in the form of what physicists called Ward identities.
Such symmetries can also manifest themselves in the form of an integrable
hierarchy of which the partition function is a tau-function.
Instead of compute the correlators for the formal quantum field theory associated with K-theory
directly as in this paper,
one can search for the constraints or the integrable hierarchies satisfied by them.

Problem 2. Finding other field theories dual to our theory in this paper.
At least in the case of Hilbert schemes of $\bC^2$,
our theory is expected to be dual to a refined topological string theory on the
resolved conifold $\cO_{\bP^1}(-1) \oplus \cO_{\bP^1}(-1)$.
It is interesting to extend to the theories on toric surfaces to see whether they
are related to some refined topological string theory on some toric Calabi-Yau 3-folds.

Problem 3. Study of relationship between Problem 1 and Problem 2.
In Gromov-Witten theory
toric Calabi-Yau 3-folds are conjecturally mirror to some plane curves on which after quantization
one can define constrains and integrable hierarchies \cite{ADKMV}.
We expect this can be generalized to our theory defined using $K$-theory on Hilbert schemes.

Problem 4. Study the relationship to Cherednik's double affine Hecke algebras.
It will be interesting to relate tautological sheaves 
to the work of Gordon and Stafford \cite{Gor-Sta1, Gor-Sta2}. 
We expect a quantized version of the work by Costello-Gronjowski \cite{Cos-Gro}
describes the equivariant $K$-theory of the Hilbert schemes of points in $\bC^2$.
It should be related to but different from the quantum differential operators
discovered by Okounkov and Pandhariapnde \cite{Oko-Pan2}.

We expect that works in the field of noncommutative algebraic geometry
will be very helpful in solving these problems.

\vspace{.2in}

{\em Acknowledgements}.
The work in this paper was started  during the author's attendance of a conference
on quantum K-theory and the stay after that at Sun Yat-Sen University in January 2018.
The author thanks Professor Jianxun Hu and Professor Changzheng Li and other colleagues there
for the invitation and the hospitality. He also thanks
Professor Weiping Li, Professor Siqi Liu and Professor Yongbin Ruan for
stimulating my old interest in K-theory and Hilbert schemes.
In particular some discussions with Professor Li on Chern characters of tautological sheaves
got this work started.
The author is partly supported by NSFC grant 11661131005.

\end{document}